\definecolor{blackgreen}{RGB}{0,80,0}
\newcommand{\Real}{\mathbb R}
\newtheorem{thm}{Theorem}[section]
\newtheorem{prop}[thm]{Proposition}
\newtheorem{lemma}[thm]{Lemma}
\newtheorem{coro}[thm]{Corollary}
\numberwithin{equation}{section}
\title{Minimal mass blow-up solutions for a inhomogeneous NLS equation}
\author{Mykael Cardoso and Luiz Gustavo Farah} % Autor
\date{} % Data
\begin{document}
\maketitle
	
\begin{abstract}\noindent
We consider the inhomogeneous nonlinear Schr\"odinger (INLS) equation in $\mathbb{R}^N$
\begin{align}\label{inls}
i \partial_t u +\Delta u +V(x)|u|^{\frac{4-2b}{N}}u = 0,
\end{align}
where $V(x) = k(x)|x|^{-b}$, with $b>0$. Under suitable assumptions on $k(x)$, we established the threshold for global existence and blow-up and then study the existence and non-existence of minimal mass blow-up solutions. 
\end{abstract}

\section{Introduction}
In this paper we consider the initial value problem (IVP) for the inhomogeneous nonlinear Schr\"odinger (INLS) equation
\begin{equation}\label{PVI}
\begin{cases}
i \partial_t u + \Delta u + k(x)|x|^{-b}|u|^{\frac{4-2b}{N}}u = 0, \,\,\, x \in \mathbb{R}^N, \,t>0,\\
u(0) = u_0 \in H^1(\mathbb{R}^N),
\end{cases}
\end{equation}
where $N\geq 1$ and $0<b<\min\{2,N\}$. The specific assumptions on the function $k(x)\in C^1(\mathbb{R}^N)$ will be detailed later. This model is relevant to the investigation of the nonlinear propagation of laser beams influenced by spatially varying interactions, see for instance Belmonte-P{\'e}rez-Vekslerchik-Torres \cite{BBPGVT07}. For $b=0$, the IVP \eqref{PVI} simplifies to the model studied in the classical papers by Merle \cite{Me96} and Rapha{\"e}l-Szeftel \cite{RZ11}. 

When $k(x)\equiv k>0$ is a constant function, the scaling symmetry preserves the $L^2$ norm and the INLS equation is referred to as mass critical for any $0< b<\min\{2,N\}$. In this case, the local well-posedness question for the IVP \eqref{PVI} has been has been extensively investigated in recent literature. We refer the reader to Genoud-Stuart \cite[Appendix K]{GS08} for the proof in the general case $N\geq 1$ and $0<b<\min\{2,N\}$, using the abstract theory developed by Cazenave \cite[Theorem 4.3.1]{Ca03}. Also Guzm\'an \cite[Corollary 1.6]{Ca17}, Dinh \cite[Theorem 1.2]{Di21}, Aloui-Tayachi \cite[Theorem 1.2]{AT21} and Campos-Correia-Farah \cite[Theorem 1.1]{CCF22} obtained similar result, by utilizing Strichartz estimates and dividing the spatial domain into distinct regions (near and far from the origin), reaching the case\footnote{As pointed out in Dinh \cite[Remark 3.2]{Di21} this approach can not be applied to study the case $N=1$.} $N\geq 2$ and $0<b<\min\{2,N/2\}$. Genoud \cite{Ge12} identified sufficient conditions for global well-posedness using a sharp Gagliardo-Nirenberg inequality and we review his result in Section \ref{Not}. Blow-up solutions are also known to exist for this equation. Indeed, if $u$ is a solution of 
\begin{equation}\label{INLSc}
i\partial_{t} u+\Delta u+k|x|^{-b}|u|^{\frac{4-2b}{N}}u=0,
\end{equation}
then, for all $T,\lambda > 0$, from the pseudo-conformal invariance (see Combet-Genoud \cite[Lemma 9]{CG16}) and the scaling symmetry, we deduce that
\begin{equation}\label{QT}
Q_{T, \lambda}\left(x,t\right)=\left(\frac{\lambda}{T-t}\right)^{N / 2} e^{i \lambda^{2} /(T-t){-i|x|^{2} / 4(T-t)}} Q_{k}\left(\frac{\lambda x}{T-t}\right),
\end{equation}
is also a solution of \eqref{INLSc}, where $Q_{k}$ is the unique positive and radial solution of
\begin{equation}\label{GSeq}
-\Delta Q+Q-k|x|^{-b}\left|Q\right|^{\frac{4-2 b}{N}} Q=0, 
\end{equation}
(we refer the reader to Genoud-Stuart \cite{GS08} and Genoud \cite{Ge12} for the existence and uniqueness theory).
Note that $\|Q_{T, \lambda}(t)\|_{L^2}=\|Q_{k}\|_{L^2}$ for all existence time. Combet-Genoud \cite{CG16} establish the classification of such blow-up solutions, following the ideas introduced in the seminal work of Merle \cite{Me93} (see also Campos-Cardoso \cite{CC22} for an alternative proof, based on a compactness theorem in the spirit of Hmidi-Keraani \cite[Theorem 1.1]{HK05}).

The conserved quantities for the flow of \eqref{PVI} are the mass and energy given, respectively, by
\begin{equation}\label{mass}
M[u]= \int|u|^{2}\,dx
\end{equation}
and
\begin{equation}\label{energy}
E[u]=\frac{1}{2} \int|\nabla u|^{2}\,dx-\frac{1}{\frac{4-2 b}{N}+2} \int k(x)|x|^{-b}|u|^{\frac{4-2 b}{N}+2}\, d x.
\end{equation}

Throughout this paper, we assume the following conditions on the function $k(x)\in C^1(\mathbb{R}^N)$.
\begin{itemize}
\item[(\emph{H1})]  $k,\,\, \nabla k\in L^{\infty}(\mathbb{R}^N)$ and $k(0) > 0$;
\item[(\emph{H2})] $x\cdot \nabla k\in L^{\infty}(\mathbb{R}^N)$.
%\item[(\emph{H3})] $\displaystyle k(0)=\max_{x\in \mathbb{R}^N}k(x)$.
\end{itemize} 

As a consequence of (\emph{H1}), the local well-posedness of \eqref{PVI} can be obtained exactly as in \cite{GS08,Ca17,CCF22}. In particular, from the Strichartz estimate approach, the maximal existence time of the solution $T(u_0)>0$ depends only on the $H^1$ norm of the initial data and satisfies $T(u_0)\gtrsim \|u_0\|^{-\alpha}_{H^1}$ for some $\alpha>0$. This, in turn, together with mass conservation imply the blow-up alternative
\begin{equation}\label{BUAlt2}
\text{if}\quad T(u_0)<+\infty,\quad \text{then}\quad  \lim_{t \uparrow T}\|\nabla u(t)\|_{L^2}=+\infty.
\end{equation}
We say that a solution blows up in finite time if that maximal existence time is finite (and therefore the above limit holds).

Our main goal in the present paper is to established the threshold for global existence and blow-up and then study the existence (or not) of blow-up solutions at the threshold level. Unlike the results obtained by Merle \cite{Me96} in the case $b=0$, we prove that the behavior of the function $k(x)$ at the origin is crucial to our analysis and, in particular, our results remain valid even when this function changes sign. Indeed, set $Q_{k(0)}$ to be the unique positive and radial solution of equation \eqref{GSeq} with $k=k(0)$, then as a consequence of a concentration result (see Theorem \ref{Conc1} below), we first obtain the following global result.
\begin{thm}\label{Thm1}
Assume ($\text{H1}$). Let $u_0\in H^1(\mathbb{R}^N)$ such that $\left\|u_{0}\right\|_{L^2}<\|Q_{k(0)}\|_{L^2}$, then the corresponding solution to the IVP \eqref{PVI} is global in $H^{1}$.
\end{thm}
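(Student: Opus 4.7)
The strategy is a short deduction from the anticipated concentration result (Theorem \ref{Conc1}) combined with the blow-up alternative \eqref{BUAlt2}, following the same template used by Weinstein for the classical mass-critical NLS. The plan is to argue by contradiction: assume that $u_0$ satisfies $\|u_0\|_{L^2} < \|Q_{k(0)}\|_{L^2}$ but that the maximal existence time $T := T(u_0)$ is finite. By \eqref{BUAlt2}, this forces $\|\nabla u(t)\|_{L^2} \to +\infty$ as $t \uparrow T$.

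Next, I would pick any sequence $t_n \uparrow T$ and consider the bounded-mass, unbounded-gradient sequence $v_n := u(\cdot,t_n)$ in $H^1(\mathbb{R}^N)$. The idea is to feed this sequence into Theorem \ref{Conc1}, which (in the spirit of Hmidi--Keraani, as the authors indicate) should produce a sequence of centers $x_n \in \mathbb{R}^N$ and scales $\rho_n \to 0$ such that, after phase and translation adjustments, a subsequence of $\rho_n^{N/2} v_n(\rho_n\,\cdot\, + x_n)$ converges weakly (or strongly) to a profile of mass at least $\|Q_{k(0)}\|_{L^2}$. Concretely, one expects the statement
\[
\liminf_{n\to\infty}\int_{|x-x_n|\le R\rho_n}|u(x,t_n)|^2\,dx \;\ge\; \|Q_{k(0)}\|_{L^2}^{\,2}
\]
for every $R$ large enough. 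Combined with mass conservation $\|u(t_n)\|_{L^2}^2 = \|u_0\|_{L^2}^2$, this immediately yields $\|u_0\|_{L^2} \ge \|Q_{k(0)}\|_{L^2}$, contradicting the hypothesis. Therefore $T(u_0) = +\infty$ and the solution is global in $H^1$.

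The hard part is entirely loaded into Theorem \ref{Conc1}: one must be sure that the ground state appearing in the concentration lower bound is precisely $Q_{k(0)}$ rather than some other ground state. This requires showing that any concentration bubble of a blow-up sequence for \eqref{PVI} must sit at the origin, where $k(x)|x|^{-b}$ retains its singular structure; at any other point $x_0\ne 0$ the coefficient $k(x_0)|x_0|^{-b}$ is finite and the pure-power nonlinearity $|u|^{(4-2b)/N}u$ is $L^2$-subcritical, so no concentration of mass in $H^1$ can occur there. This localization at the origin, together with a sharp Gagliardo--Nirenberg inequality asymptotically tied to $k(0)$ (justified by $k \in C^1$ and hypotheses $(H1)$--$(H2)$), is what fixes the threshold at $\|Q_{k(0)}\|_{L^2}$ and makes the bound sharp regardless of the sign of $k$ away from $0$. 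Once Theorem \ref{Conc1} is in hand, the proof of Theorem \ref{Thm1} reduces to the two-line contradiction above.
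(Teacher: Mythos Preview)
Your proposal is correct and matches the paper's approach: argue by contradiction, invoke the concentration result (Theorem~\ref{Conc1}) together with mass conservation, and obtain $\|u_0\|_{L^2}\ge \|Q_{k(0)}\|_{L^2}$. One small simplification: in the paper's formulation of Theorem~\ref{Conc1} the concentration is already pinned at the origin (the integral is over $\{|x|\le \rho(t)\}$, valid for any fixed $R>0$ since $\|\nabla u(t)\|_{L^2}\to\infty$), so the moving centers $x_n$ you introduce are unnecessary---your third paragraph correctly anticipates this, and only $(H1)$ is used.
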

Let $k_1=\|k\|_{L^{\infty}}>0$. If $k(0)<k_1$, then, in view of \eqref{RelQk} below, we have that $\|Q_{k_1}\|_{L^2}< \|Q_{k(0)}\|_{L^2}$. The previous result says that it is possible to find a global solution with mass slightly above the level $\|Q_{k_1}\|_{L^2}$ only assuming (\emph{H1}). This is quite different from the case $b=0$ studied by Merle \cite{Me96}, where the threshold between global existence and blow-up is proved, under an additional condition on $k(x)$, to be $\|Q_{k_1}\|_{L^2}$. 

The next result asserts blow-up in finite time for negative energy solutions. 
\begin{thm}\label{Thm1.1}
Assume ($\text{H1}$), ($\text{H2}$) and 
\begin{equation}\label{k0global}
x \cdot \nabla k(x)\leq 0, \quad \mbox{for all}\quad x\in \mathbb{R}^N.
\end{equation}
If $u_0\in H^1(\mathbb{R}^N)$ is such that $E[u_0]<0$, then the corresponding solution to the IVP \eqref{PVI} blows up in finite time.
\end{thm}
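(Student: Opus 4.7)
The natural attack is Glassey's virial argument. Suppose first, as an auxiliary hypothesis to be removed at the end, that $u_0$ has finite variance, and set $\Sigma(t):=\int_{\mathbb{R}^N}|x|^2|u(t,x)|^2\,dx$. The plan is to derive the differential inequality $\Sigma''(t)\le 16\,E[u_0]<0$; since $\Sigma\ge 0$, integrating twice in $t$ produces a contradiction in finite time, which via the blow-up alternative \eqref{BUAlt2} forces $\|\nabla u(t)\|_{L^2}\to\infty$ as $t\uparrow T(u_0)<\infty$.

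\textbf{Virial identity and the mass-critical cancellation.} Differentiating $\Sigma$ twice and using \eqref{PVI}, repeated integrations by parts yield
\[
\Sigma'(t)=4\,\operatorname{Im}\int_{\mathbb{R}^N} \bar u\,(x\cdot\nabla u)\,dx,
\]
and, with $W(x):=k(x)|x|^{-b}$ and $p:=(4-2b)/N$,
\[
\Sigma''(t)=8\|\nabla u\|_{L^2}^2 - \frac{4Np}{p+2}\int W(x)|u|^{p+2}\,dx + \frac{8}{p+2}\int (x\cdot\nabla W(x))|u|^{p+2}\,dx.
\]
Writing $x\cdot\nabla W = (x\cdot\nabla k)|x|^{-b} - bk|x|^{-b}$ and using the mass-critical relation $Np=4-2b$ (so that $4Np+8b=16$), the total coefficient of $\int k|x|^{-b}|u|^{p+2}$ collapses to $-\tfrac{16}{p+2}$; substituting the energy identity $\tfrac{1}{p+2}\int k|x|^{-b}|u|^{p+2}=\tfrac12\|\nabla u\|_{L^2}^2 - E[u_0]$ cancels the kinetic terms and leaves
\[
\Sigma''(t)=16\,E[u_0]+\frac{8}{p+2}\int (x\cdot\nabla k(x))|x|^{-b}|u(t,x)|^{p+2}\,dx.
\]
By hypothesis \eqref{k0global} the integrand is pointwise $\le 0$, so $\Sigma''(t)\le 16\,E[u_0]<0$, which completes the scheme under the auxiliary hypothesis.

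\textbf{Main obstacle.} Extending from finite variance to arbitrary $u_0\in H^1(\mathbb{R}^N)$. Without decay, $\Sigma(t)$ may be infinite and the computation above is only formal. The standard cure is a truncated virial: replace $|x|^2$ by a cutoff $\phi_R\in C^\infty$ with $\phi_R(x)=|x|^2$ on $\{|x|\le R\}$, $\phi_R$ bounded for $|x|\ge 2R$, and $\nabla\phi_R$, $\Delta\phi_R$, $\Delta^2\phi_R$ suitably controlled, and analyze $\Sigma_R(t):=\int \phi_R|u|^2\,dx$. The analogous identity produces
\[
\Sigma_R''(t)=16\,E[u_0]+\frac{8}{p+2}\int (x\cdot\nabla k(x))|x|^{-b}|u|^{p+2}\,dx+\mathcal{E}_R(t),
\]
with $\mathcal{E}_R(t)$ supported in $\{|x|\ge R\}$. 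Bounding $\mathcal{E}_R$ uniformly in $t$ -- using (H1) to control $k(x)|x|^{-b}$ and (H2) to control $x\cdot\nabla k$ on $\{|x|\ge R\}$, together with mass conservation -- is the principal technical point; once $|\mathcal{E}_R|$ is made strictly smaller than $|8\,E[u_0]|$ by choosing $R$ large, the argument of the previous paragraph runs with $\Sigma_R$ in place of $\Sigma$. The singularity of $|x|^{-b}$ inside $\{|x|\le R\}$ poses no obstacle since $b<\min\{2,N\}$ ensures local integrability of $|x|^{-b}|u|^{p+2}$ against $H^1$ densities via Hardy's inequality.
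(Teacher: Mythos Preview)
Your overall architecture matches the paper's: both use a truncated virial $z_R(t)=\int\phi_R|u(t)|^2\,dx$, and your finite-variance identity $\Sigma''=16E[u_0]+\frac{8}{p+2}\int(x\cdot\nabla k)|x|^{-b}|u|^{p+2}$ agrees with \eqref{virial} at $x_0=0$. The gap is in how you propose to close the error $\mathcal{E}_R$. Mass conservation together with the $L^\infty$ bounds from (H1)--(H2) is \emph{not} enough: $\mathcal{E}_R$ contains a potential-energy tail of the form $c\int_{|x|>R}k(x)|x|^{-b}|u(t)|^{\sigma_b}$ with $\sigma_b>2$, and this integral is not controlled by $\|u(t)\|_{L^2}$---it can in principle diverge along a blow-up sequence. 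The truncation also produces a gradient error whose sign depends on the cutoff. Simply invoking (H1), (H2) and mass conservation does not supply the mechanism that makes these two errors harmless.

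What the paper does, following \cite{CF22}, is to take a \emph{specific} radial profile $\phi$ built from \eqref{v(r)} with a parameter $l>2/b$, designed so that (a) the gradient error $K_{1,R}$ is nonpositive, and (b) the potential error $K_{2,R}$ can be absorbed into $K_{1,R}$ up to an $O(R^{-2})$ remainder, by combining the decay $|x|^{-b}\le R^{-b}$ on the tail with a localized Gagliardo--Nirenberg inequality and Young's inequality. Together with $K_{3,R}=O(R^{-2}\|u_0\|_{L^2}^2)$ and $K_{4,R}\le 0$ from \eqref{k0global}, this yields $z_R''\le 8E[u_0]<0$ for $R\gg1$. The condition $b>0$ is essential to step~(b); it is precisely what makes the non-radial, infinite-variance blow-up accessible for the INLS, whereas the analogous statement for the classical mass-critical NLS is far more delicate.
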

In particular, the assumption \eqref{k0global} implies that the origin is a global maximum, that is, $k(0)=\max_{x\in \mathbb{R}^N}k(x)$. Note that, under this assumption, if $\left\|u_{0}\right\|_{L^2}<\|Q_{k(0)}\|_{L^2}$, then $E[u_0]\geq 0$ (see relations \eqref{Ek-est} and \eqref{EkE} below). Additionally, no radial assumption is needed in the previous theorem and the method of the proof follows the strategy introduced in Cardoso-Farah \cite{CF22}. 
It should be noted that we can not translate the assumption $x \cdot \nabla k(x)\leq 0$ to another point $x_0\neq 0$ as in Merle \cite{Me96}. Indeed, consider the simplest case where the initial data belongs to the virial space $\Sigma=H^1(\mathbb{R}^N)\cap\{f: |x|f\in L^2(\mathbb{R}^N)\}$. For any $x_0\in \mathbb{R}$, the viral identity associated to the solution of \eqref{PVI} reads as follows (see, for instance Farah \cite[Proposition 4.1]{Fa16})
\begin{align}\label{virial}
\frac{d^{2}}{d t^{2}} \int\left|x-x_{0}\right|^{2}|u|^{2} d x&=16 E\left[u_{0}\right]+\frac{4}{\frac{2- b}{N}+1} \int\left(x-x_{0}\right) \cdot \nabla k(x)|x|^{-b}|u|^{\frac{4-2 b}{N}+2}\, d x \nonumber \\
& \quad  +\frac{4 b}{\frac{2- b}{N}+1} \int \frac{x \cdot x_{0}}{|x|^{b+2}} k(x)|u|^{\frac{4-2 b}{N}+2}\, d x.
\end{align}
The first integral in the right hand side of \eqref{virial} is non-positive assuming
\begin{align}\label{cond1}
\left(x-x_{0}\right) \cdot \nabla k(x) \leq 0, \quad \text{for all} \quad x \in \mathbb{R}^{N}.
\end{align}
However, the last integral does not have a definite sign. 
%For this reason, we assume \eqref{cond1} with $x_{0}=0$, so that this integral vanishes.

The next result shows that the level $\|Q_{k(0)}\|_{L^2}$ is the correct threshold for global existence and blow-up. Indeed, we can find blow-up solution slightly above this level assuming a global or local condition on the function $k(x)$.
\begin{thm}\label{Thm2} 
Assume ($\text{H1}$), ($\text{H2}$) and
\begin{itemize}
\item[(i)] $x \cdot \nabla k(x)\leq 0, \quad \text{for all}\quad x\in \mathbb{R}^N$
\end{itemize}
or
\begin{itemize}
\item[(ii)] $x \cdot \nabla k(x)<0, \quad \text {for all}\quad 0<|x|<\ell_{0}$ and some $\ell_{0}>0$.
\end{itemize}
Then, there exists $\varepsilon_{0}>0$ such that for all $\varepsilon \in (0, \varepsilon_{0} )$, there exists $u_{0,\varepsilon} \in H^{1} (\mathbb{R}^{N})$ such that $\left\|u_{0,\varepsilon}\right\|_{L^2}=\|Q_{k(0)}\|_{L^2}+\varepsilon$ and the corresponding solution $u_{\varepsilon}(t)$ of \eqref{PVI} blows up in finite time.
\end{thm}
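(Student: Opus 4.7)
The plan is to exhibit, for each small $\varepsilon>0$, an explicit initial datum resembling a pseudo-conformal blow-up profile scaled to lie just above the threshold mass, and then force blow-up via the energy/virial machinery. Concretely, set
\[
u_{0,\varepsilon}(x) = (1+\eta_\varepsilon)\,\lambda^{N/2}\,Q_{k(0)}(\lambda x)\,e^{-i|x|^2/4},
\]
with $\eta_\varepsilon := \varepsilon/\|Q_{k(0)}\|_{L^2}$ (so that $\|u_{0,\varepsilon}\|_{L^2}=\|Q_{k(0)}\|_{L^2}+\varepsilon$) and a large parameter $\lambda=\lambda(\varepsilon)$ to be tuned. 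The phase is the canonical pseudo-conformal gauge and the $\lambda$-scaling concentrates the bulk of the initial mass at spatial scale $\lambda^{-1}$ near the origin, so that the behavior of $k$ at $0$ drives the dynamics.

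The first step is to verify that $E[u_{0,\varepsilon}]<0$. Using $|\nabla(v\,e^{-i|x|^2/4})|^2 = |\nabla v|^2 + \tfrac{|x|^2}{4}v^2$ (valid since $Q_{k(0)}$ is real), the scaling identity $N\sigma+b=2$ with $\sigma:=(2-b)/N$, and the Pohozaev identity for \eqref{GSeq} (which yields $k(0)\int|x|^{-b}Q_{k(0)}^{2\sigma+2}=(\sigma+1)\|\nabla Q_{k(0)}\|_{L^2}^2$), a direct computation gives
\[
E[u_{0,\varepsilon}]=\tfrac{\|\nabla Q_{k(0)}\|_{L^2}^2}{2}\bigl[(1+\eta_\varepsilon)^2-(1+\eta_\varepsilon)^{2\sigma+2}\bigr]\lambda^2+O(\lambda)+O(\lambda^{-2}),
\]
where the $O(\lambda)$ absorbs $k(x/\lambda)-k(0)=O(|x|/\lambda)$ (via $\nabla k\in L^\infty$) and the $O(\lambda^{-2})$ comes from the phase-induced $\tfrac{1}{8}\int|x|^2|u_{0,\varepsilon}|^2$. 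The bracket expands as $-2\sigma\eta_\varepsilon+O(\eta_\varepsilon^2)$, so $E[u_{0,\varepsilon}]\leq -c\,\varepsilon\,\lambda^2+O(\lambda)$ with $c>0$, which is strictly negative once $\lambda$ is chosen large relative to $\varepsilon^{-1}$; for instance, $\lambda=\varepsilon^{-2}$ suffices.

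Under hypothesis (i), Theorem \ref{Thm1.1} applies at once: negative energy together with the global sign condition on $x\cdot\nabla k$ forces finite-time blow-up. Under hypothesis (ii) the theorem is not directly available, so I would replace it by a localized virial argument. Take a radial cutoff $\chi\in C_c^\infty(\mathbb{R}^N)$ with $\chi(x)=|x|^2$ on $|x|\leq\ell_0/2$, supported in $|x|\leq\ell_0$, with bounded derivatives up to order four, and set $J(t)=\int\chi|u(t)|^2$. Differentiating twice along \eqref{PVI} produces an analogue of \eqref{virial} with $x_0=0$: the main contribution, supported in $\{|x|\leq\ell_0/2\}$, is bounded above by $16E[u_{0,\varepsilon}]+\tfrac{4N}{N+2-b}\int_{|x|\leq\ell_0/2}x\cdot\nabla k\,|x|^{-b}|u|^{2\sigma+2}<0$, plus error terms supported in $\{\ell_0/2\leq|x|\leq\ell_0\}$ controlled in absolute value by $C\|u(t)\|_{H^1}^2+C\int_{|x|\geq\ell_0/2}|x|^{-b}|u(t)|^{2\sigma+2}$. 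Since the initial data is concentrated at scale $\lambda^{-1}\ll\ell_0$ and $Q_{k(0)}$ decays exponentially, a bootstrap/continuity argument shows that during the short lifespan the mass remains overwhelmingly inside $\{|x|\leq\ell_0/2\}$, so the error is dominated by the negative main term, yielding $J''(t)\leq c_0\,E[u_{0,\varepsilon}]<0$. Double integration then contradicts $J(t)\geq 0$ unless $T<\infty$.

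The principal obstacle is the tail estimate in case (ii): one must quantitatively rule out substantial mass leakage out of $\{|x|\leq\ell_0/2\}$ over the entire pre-blow-up interval, where $x\cdot\nabla k$ loses its favorable sign. The cleanest route is a modulation decomposition $u(t,x)=e^{i\gamma(t)}\mu(t)^{N/2}Q_{k(0)}(\mu(t)(x-x(t)))+\tilde u(t,x)$ coupled with a coercivity estimate on $\tilde u$, showing $x(t)$ stays near $0$ and $\mu(t)$ stays large throughout; the inhomogeneity $k(x)|x|^{-b}$ breaks translation invariance, so the technical heart of the argument is verifying that the standard modulation estimates survive, with the drift driven by $\nabla k$ and $x\cdot\nabla k$ (both in $L^\infty$ by (\emph{H1})--(\emph{H2})).
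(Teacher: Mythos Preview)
Your treatment of case (i) is essentially the paper's: build initial data $u_{0,\varepsilon}$ with mass $\|Q_{k(0)}\|_{L^2}+\varepsilon$ and very negative energy, then invoke Theorem~\ref{Thm1.1}. The phase factor $e^{-i|x|^2/4}$ is unnecessary (the paper omits it and simply takes $f_{\lambda,\varepsilon}=(1+\varepsilon)\lambda^{N/2}Q_{k(0)}(\lambda\cdot)$), but it is harmless. The paper in fact forces $E[u_{0,\varepsilon}]\le -1/\varepsilon$ (Lemma~\ref{Lem1}), which turns out to be the quantitative input needed later.

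For case (ii) your argument has a genuine gap. In your localized virial the error terms coming from the annulus $\{\ell_0/2\le|x|\le\ell_0\}$ are \emph{not} bounded by $C\|u(t)\|_{H^1}^2$ in any useful way: the curvature error contributes $\int_{\ell_0/2\le|x|\le\ell_0}|\nabla u(t)|^2$, and you have no a priori reason for this to stay small on the whole pre--blow-up interval, since $\|\nabla u(t)\|_{L^2}\to\infty$. You recognize this and propose a modulation decomposition plus coercivity to pin $x(t)$ near $0$ and keep $\mu(t)$ large; that program might be made to work, but you do not carry it out, and it is substantially heavier than what is required.

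The paper closes case (ii) by a more elementary and self-contained route that avoids modulation entirely. First, a concentration lemma (Lemma~\ref{LemConc}) shows, by contradiction and the variational characterization (Corollary~\ref{Cor2}), that for $\varepsilon$ small the solutions $u_\varepsilon(t)$ satisfy $\int_{|x|\ge R}|u_\varepsilon(t)|^2\to 0$ as $\varepsilon\to 0$, \emph{uniformly in $t$}; this is where $E[u_{0,\varepsilon}]\le -1/\varepsilon$ is used. Second, a refined Gagliardo--Nirenberg estimate on exterior domains (Lemma~\ref{GNRef}) turns the potential tail $\int_{|x|\ge\ell_0}|x|^{-b}|u_\varepsilon|^{\sigma_b}$ into a product of a small mass factor $c(\varepsilon)$ times $\int_{|x|\ge\ell_0/2}|\nabla u_\varepsilon|^2$ plus lower order. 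Third, a \emph{second} virial $z_0$ at scale $\ell_0/4$ is differentiated to express $\int_{|x|\ge\ell_0/2}|\nabla u_\varepsilon|^2$ in terms of $E[u_{0,\varepsilon}]$, $z_0''$, and an integral over $\{|x|\le\ell_0\}$ with the favorable sign $x\cdot\nabla k<0$. Feeding this back into the large-scale virial $z_R$ and absorbing the $c(\varepsilon)$-terms yields $z_R''+c(\varepsilon)z_0''\le cE[u_{0,\varepsilon}]+c(\varepsilon)<0$, which gives blow-up. The point is that the exterior gradient is controlled not by smallness but by rewriting it via a second virial identity, with the tail smallness entering only as a multiplicative factor that can be made $\ll 1$.
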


Under assumption $(i)$ of the previous theorem we in fact have $\varepsilon_{0}=\infty$. Moreover, assumption $(ii)$ implies that the origin is a local strict maximum, that is, $k(0)>k(x)$ for $0<|x|<\rho_{0}$. 

Theorems \ref{Thm1} and \ref{Thm2} naturally lead to the definition of a minimal mass blow-up solution, which is a solution of the IVP \eqref{PVI} with initial data satisfying $\|u_0\|_{L^2}=\|Q_{k(0)}\|_{L^2}$, that blows up in finite time $T>0$. The next result provides a characterization of this type of solution.
\begin{thm}\label{Thm2.1}
Assume ($\text{H1}$). Suppose that $u(t)$ is a minimal mass blow-up solution for the IVP \eqref{PVI} blowing up in finite time $T>0$, then
%\begin{equation}\label{BUAlt}
%\lim_{t \uparrow T}\|\nabla u(t)\|_{L_x^2}=+\infty
%\end{equation}
%and
\begin{equation}\label{WeakLim}
|u(t)|^{2} \rightharpoonup \|Q_{k(0)}\|_{L^2}^{2} \delta_{0}, \quad \mbox{as}\quad t\uparrow T,
\end{equation}
where $\delta_{0}$ stands for the delta function at zero and $T>0$ is the blow-up time.
\end{thm}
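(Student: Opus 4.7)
The plan is to adapt Merle's classical minimal-mass rigidity argument for the mass-critical NLS to the inhomogeneous setting, using the concentration theorem already available in the paper (Theorem \ref{Conc1}). Fix any sequence $t_n \uparrow T$; by the blow-up alternative \eqref{BUAlt2} one has $\|\nabla u(t_n)\|_{L^2} \to \infty$, while mass conservation yields $\|u(t_n)\|_{L^2} = \|Q_{k(0)}\|_{L^2}$ for every $n$. I would introduce the scaling parameter $\rho_n := \|\nabla Q_{k(0)}\|_{L^2}/\|\nabla u(t_n)\|_{L^2}\to 0^+$ and the rescaled sequence
\begin{equation*}
v_n(x) := \rho_n^{N/2}\, e^{i\gamma_n}\, u(\rho_n x, t_n),
\end{equation*}
where the phases $\gamma_n$ will be chosen momentarily. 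By construction, $\|v_n\|_{L^2} = \|Q_{k(0)}\|_{L^2}$ and $\|\nabla v_n\|_{L^2} = \|\nabla Q_{k(0)}\|_{L^2}$, while energy conservation combined with the scaling of $E$ shows that the rescaled nonlinear potential energy of $v_n$ tends to a definite limit as $n \to \infty$.

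Next, I would apply Theorem \ref{Conc1} to the bounded sequence $v_n$. After passing to a subsequence and choosing $\gamma_n$ appropriately, this yields $v_n \rightharpoonup Q_{k(0)}$ weakly in $H^1$. The inhomogeneous weight $|x|^{-b}$ breaks translation symmetry and forces the concentration point to be the origin, while the variational characterization of $Q_{k(0)}$ as the unique positive radial minimizer of the associated sharp Gagliardo--Nirenberg inequality (with optimal constant dictated by $k(0)$) identifies the profile uniquely. Because $\|v_n\|_{L^2} = \|Q_{k(0)}\|_{L^2}$ for every $n$, the weak $L^2$ convergence together with equality of norms upgrade to strong convergence $v_n \to Q_{k(0)}$ in $L^2$.

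Finally, to deduce the measure-theoretic statement \eqref{WeakLim}, I would test against an arbitrary $\phi \in C_c(\mathbb{R}^N)$ and change variables:
\begin{equation*}
\int \phi(y)\,|u(y,t_n)|^2\,dy = \int \phi(\rho_n x)\,|v_n(x)|^2\,dx.
\end{equation*}
Since $\rho_n \to 0$ and $|v_n|^2 \to |Q_{k(0)}|^2$ in $L^1$, a standard splitting argument (uniform continuity of $\phi$ on a large ball plus $L^1$-tightness of $|v_n|^2$ on its complement) shows that the right-hand side converges to $\phi(0)\,\|Q_{k(0)}\|_{L^2}^2$. Because the limit is independent of the chosen subsequence, the full net as $t\uparrow T$ converges, giving \eqref{WeakLim}. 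The hardest step is the rigidity in the second paragraph: establishing that the limit profile is \emph{exactly} $Q_{k(0)}$ requires both the sharp Gagliardo--Nirenberg inequality attaining its optimal constant only at $Q_{k(0)}$ and the automatic localization of concentration to the origin coming from the singular weight. Both ingredients are precisely what Theorem \ref{Conc1} is designed to provide, which is why hypothesis (\emph{H1}) alone suffices here.
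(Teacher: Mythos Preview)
Your argument is correct, but it is more elaborate than the paper's proof and essentially reproduces Corollary~\ref{Cor4} rather than the short direct argument the authors give for Theorem~\ref{Thm2.1}. The paper simply combines Theorem~\ref{Conc1} with mass conservation to obtain, for every $R>0$,
\[
\lim_{t\uparrow T}\int_{|x|\leq R}|u(t)|^{2}\,dx=\|Q_{k(0)}\|_{L^{2}}^{2}
\quad\text{and hence}\quad
\lim_{t\uparrow T}\int_{|x|>R}|u(t)|^{2}\,dx=0,
\]
and then tests against $\varphi\in C_0^\infty$ via the identity $\int|u(t)|^{2}\varphi\,dx=\|Q_{k(0)}\|_{L^{2}}^{2}\varphi(0)+\int|u(t)|^{2}(\varphi-\varphi(0))\,dx$, splitting the remainder over $\{|x|<\delta\}$ and $\{|x|\geq\delta\}$. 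No rescaled profile, no identification of the limiting shape, and no subsequence issue arises: the concentration lower bound plus the exact mass equality do all the work. Your route instead recovers the full $H^{1}$ profile convergence $v_n\to Q_{k(0)}$, which is a stronger intermediate statement (and is what the paper later packages as Corollary~\ref{Cor4} for use in the proof of Theorem~\ref{Thm3}); the cost is the extra machinery of the variational characterization and a subsequence argument.

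One wording issue worth cleaning up: you write ``apply Theorem~\ref{Conc1} to the bounded sequence $v_n$,'' but Theorem~\ref{Conc1} is a statement about the solution $u(t)$, not about rescaled snapshots. What you actually need at that step is (i) the computation in the proof of Theorem~\ref{Conc1} showing $\limsup_n E_{k(0)}[v_n]\leq 0$, and then (ii) Proposition~\ref{Prop2-VC} to conclude $v_n\to e^{i\gamma_0}Q_{k(0)}$ in $H^1$ along a subsequence. With that correction your argument is complete.
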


Finally, we investigate the problem to find a blow-up solution with minimal mass. For the non-existence we prove
\begin{thm}\label{Thm3}
Assume ({H1}), ({H2}) and that there exist $\ell_{0}, c_0>0$ such that
\begin{equation}\label{Assump-Thm3}
x \cdot \nabla k(x)<-c_0|x|^{1+\alpha_0}, \quad \text {for}\quad |x|<\ell_{0},
\end{equation}
with $\alpha_0 \in (0,1)$. Then there is no minimal mass blow-up solution for the IVP \eqref{PVI}.
%$$
%\left\|u_{0}\right\|_{L^2_x}=\|Q_{k(0)}\|_{L^2_x}
%$$
%and 
%\begin{equation}
%|u(t)|^{2} \rightharpoonup \|Q_{k(0)}\|_{L^2_x}^{2} \delta_{0}, \quad \mbox{as}\quad t \rightarrow T,
%\end{equation}
%where $\delta_{0}$ stands for the delta function at zero and $T>0$ is the blow-up time.
\end{thm}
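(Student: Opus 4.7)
The plan is to argue by contradiction: assume $u(t)$ is a minimal mass blow-up solution of \eqref{PVI} with blow-up time $T>0$, and derive that the conserved energy $E[u(t)]=E[u_0]$ would have to diverge. First, I would invoke a variational compactness lemma in the spirit of Hmidi-Keraani \cite{HK05} (adapted to the inhomogeneous nonlinearity as in Campos-Cardoso \cite{CC22}): since $\|u(t)\|_{L^2}=\|Q_{k(0)}\|_{L^2}$ and $\|\nabla u(t)\|_{L^2}\to\infty$, there exist a sequence $t_n\uparrow T$, scales $\lambda_n\to 0^+$, and phases $\gamma_n\in\mathbb{R}$ such that the rescaled functions
\begin{equation*}
v_n(y) \;=\; \lambda_n^{N/2}\,e^{-i\gamma_n}\,u(t_n,\lambda_n y)
\end{equation*}
converge strongly in $H^{1}(\mathbb{R}^N)$ to $Q_{k(0)}$. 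Note that the concentration at the origin provided by Theorem \ref{Thm2.1} removes the need for a translation parameter, since the inhomogeneity $|x|^{-b}$ pins the profile at $0$.

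The core step is a sharp expansion of the conserved energy on this profile. A direct scaling computation, with $p=(4-2b)/N$, yields
\begin{equation*}
\lambda_n^{2}\,E[u_0] \;=\; E_{k(0)}[v_n] \;-\; \frac{1}{p+2}\int_{\mathbb{R}^N}\bigl(k(\lambda_n y)-k(0)\bigr)|y|^{-b}|v_n(y)|^{p+2}\,dy,
\end{equation*}
where $E_{k(0)}$ denotes the functional \eqref{energy} with $k$ replaced by the constant $k(0)$. Since $\|v_n\|_{L^2}=\|Q_{k(0)}\|_{L^2}$, the sharp Gagliardo-Nirenberg inequality underlying Theorem \ref{Thm1} forces $E_{k(0)}[v_n]\geq 0$.

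To exploit the hypothesis \eqref{Assump-Thm3}, I would integrate the pointwise bound $x\cdot\nabla k(x)\leq -c_0|x|^{1+\alpha_0}$ along radial rays, obtaining $k(0)-k(x)\geq \tfrac{c_0}{1+\alpha_0}|x|^{1+\alpha_0}$ for $|x|<\ell_0$. Applying this on $\{|y|<\ell_0/\lambda_n\}$ and controlling its complement via the exponential decay of $Q_{k(0)}$ together with $v_n\to Q_{k(0)}$ in $H^{1}$, one obtains
\begin{equation*}
-\frac{1}{p+2}\int_{\mathbb{R}^N}\bigl(k(\lambda_n y)-k(0)\bigr)|y|^{-b}|v_n|^{p+2}\,dy \;\geq\; C\,\lambda_n^{1+\alpha_0}
\end{equation*}
for some $C>0$ and all $n$ large. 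Combining the last two displays gives $\lambda_n^{2}E[u_0]\geq C\,\lambda_n^{1+\alpha_0}$, that is, $E[u_0]\geq C\,\lambda_n^{\alpha_0-1}$. Since $\alpha_0<1$ and $\lambda_n\to 0$, the right-hand side diverges, contradicting the finiteness of $E[u_0]$.

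The hard part I expect is the first step: upgrading the measure-level concentration in Theorem \ref{Thm2.1} to the strong $H^{1}$ convergence of $v_n$ to $Q_{k(0)}$. This is what powers the expansion of the correction integral at order $\lambda_n^{1+\alpha_0}$ and, in particular, underlies the tail control on $\{|y|\geq\ell_0/\lambda_n\}$; once it is available, the remainder is essentially a scaling computation plus a first-order Taylor expansion of $k$, and the role of the restriction $\alpha_0\in(0,1)$ is precisely to make $\lambda_n^{1+\alpha_0}$ dominate $\lambda_n^{2}$ as $\lambda_n\to 0$.
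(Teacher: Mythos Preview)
Your proposal is correct and follows essentially the same strategy as the paper's proof: both rescale by $\lambda_n=\|\nabla Q_{k(0)}\|_{L^2}/\|\nabla u(t_n)\|_{L^2}$, use $E_{k(0)}[v_n]\geq 0$ from the sharp Gagliardo--Nirenberg inequality, integrate \eqref{Assump-Thm3} to obtain $k(0)-k(x)\geq c|x|^{1+\alpha_0}$ near the origin, control the tail via the exponential decay of $Q_{k(0)}$, and derive a lower bound of order $\lambda_n^{\alpha_0-1}$ on $E[u_0]$ that contradicts its finiteness. The strong $H^1$ convergence you flag as the hard step is exactly the content of Corollary~\ref{Cor4} in the paper, so it is already available and need not be re-established.
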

Note that the assumption \eqref{Assump-Thm3} implies, for some $c>0$, that
\begin{equation}\label{alpha0}
k(0)-k(x)\geq c|x|^{1+\alpha_0}\quad  \text { for }\quad |x|<\ell_0.
\end{equation}
Moreover, it also implies that $k(x)$ is not $C^2(\mathbb{R}^N)$ near the origin.

%Indeed, assuming \eqref{Assump-Thm3} in dimension $N=1$ we have
%\begin{equation}
%-\int_0^x t k'(t)\,dt \geq {c_0}\int _0^x |t|^{1+\alpha_0}\, dt
%\end{equation}
%and integrating by parts
%\begin{equation}
%-t k(t)|_ 0^x +\int_0^x  k(t)\,dt \geq \frac{c_0}{2+\alpha_0}{|x|^{2+\alpha_0}}.
%\end{equation}
%For $x>0$ (the negative case is analogous) we have
%\begin{equation}
%-k(x) +\frac{1}{x}\int_0^x  k(t)\,dt \geq \frac{c_0}{2+\alpha_0}{|x|^{1+\alpha_0}}
%\end{equation}
%and since the origin is a local maximum the inequality \eqref{alpha0} holds. In particular, $\nabla k(0)=0$. If $k(x) \in C^2(\mathbb{R}^N)$, from \eqref{alpha0} and another application of the Mean Value Theorem, there exists $c>0$ and $t\in(0,1)$ such that, for $|x|\ll 1$, we have
%$$
%c|x|^{1+\alpha_0}\leq k(0)-k(x)\leq |x| |\nabla k(tx)|=|x| |\nabla k(tx)-\nabla k(0)|\leq c|x|^2 \|\nabla^2 k\|_{L^{\infty}(|x|\ll 1)},
%$$
%which is an absurd for sufficiently small $|x|$.

On the other hand, assuming a flatness condition on $k(x)$ around the origin, we have the following existence result.
\begin{thm}\label{MinimalBlow}
Assume ({H1})-({H2}) and that there exists $\ell_{0}>0$ such that
\begin{equation}\label{flat}
\quad k(x)=k(0),\quad \text {for}\quad |x|<\ell_{0}.
\end{equation}
Then, for any $T>0$, there exists a minimal mass blow-up solution for the IVP \eqref{PVI} that blows up at time $T$.
%such that 
%$$
%\left\|u_{0}\right\|_{L^2_x}=\|Q_{k(0)}\|_{L^2_x}
%$$
%and 
%\begin{equation}\label{wconc}
%|u(t)|^{2} \rightharpoonup \|Q_{k(0)}\|_{L^2_x}^{2} \delta_{0}, \quad \mbox{as}\quad t \rightarrow T.
%\end{equation}
\end{thm}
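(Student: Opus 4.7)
The plan is to construct a minimal mass blow-up solution by perturbing off the explicit pseudo-conformal blow-up solution of the constant-coefficient problem. Fix $T>0$ and $\lambda>0$, and let $S(x,t):=Q_{T,\lambda}(x,t)$ be as in \eqref{QT} with the choice $Q_k=Q_{k(0)}$, so that $S$ solves the constant-coefficient INLS \eqref{INLSc} with $k=k(0)$, blows up exactly at time $T$, and satisfies $\|S(t)\|_{L^2}=\|Q_{k(0)}\|_{L^2}$ for all $t<T$. The crucial observation is that the flatness hypothesis \eqref{flat} makes $S$ an exact solution of the full equation \eqref{PVI} on the region $|x|<\ell_0$; the only discrepancy is the source
\begin{equation*}
R(x,t):=\bigl(k(x)-k(0)\bigr)|x|^{-b}|S(x,t)|^{(4-2b)/N}S(x,t),
\end{equation*}
which is supported on $|x|\geq\ell_0$. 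Because $Q_{k(0)}$ decays exponentially and $S(\cdot,t)$ concentrates at the origin at scale $(T-t)/\lambda$, a direct computation yields, for $t$ sufficiently close to $T$, a bound of the form $\|R(t)\|_{H^1}\leq C\exp\bigl(-c\ell_0/(T-t)\bigr)$ with constants depending only on $\lambda$, $\|k\|_{L^\infty}$, $\|\nabla k\|_{L^\infty}$ and $Q_{k(0)}$.

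Next I would pass to a compactness argument. Choose a sequence $t_n\uparrow T$ and, for each $n$, solve \eqref{PVI} \emph{backward} in time with final datum $u_n(t_n)=S(t_n)$, obtaining a solution $u_n$ on some maximal interval $(T_n^{\ast},t_n]$. Writing $v_n:=u_n-S$, the difference satisfies a Schr\"odinger equation with zero final datum, whose right-hand side consists of the source $R$ plus the difference of the two nonlinearities evaluated at $S+v_n$ and $S$; provided $\|v_n(t)\|_{H^1}$ remains below a small fixed threshold, the latter is linear in $v_n$ with coefficients controlled by powers of $S$, plus higher-order terms. A Strichartz/energy bootstrap for $v_n$, combined with the exponential smallness of $R$ near $t=T$, would allow one to extend $u_n$ to all of $[0,t_n]$ and to obtain the uniform bound $\|v_n\|_{L^\infty([0,t_n];H^1)}\to 0$ as $n\to\infty$.

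Once this control is in hand, the family $\{u_n(0)\}$ is bounded in $H^1$ with $\|u_n(0)\|_{L^2}=\|Q_{k(0)}\|_{L^2}$ by mass conservation. Extracting a subsequence with $u_n(0)\to u_0$ strongly in $L^2$ and weakly in $H^1$, and letting $u$ be the corresponding solution of \eqref{PVI}, the $H^1$-stability of the INLS flow under small perturbations identifies $u$ with the limit $S+v$ on every compact subinterval of $[0,T)$, where $v$ inherits the smallness estimates. Since $\|S(t)\|_{H^1}\to\infty$ as $t\uparrow T$ while $\|v(t)\|_{H^1}$ remains bounded, the solution $u$ blows up precisely at time $T$, and $\|u_0\|_{L^2}=\|Q_{k(0)}\|_{L^2}$ by $L^2$-continuity, producing the desired minimal mass blow-up solution.

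The main obstacle will be closing the uniform bootstrap on $v_n$ over the whole of $[0,t_n]$: the profile $S$ has pointwise size of order $(T-t)^{-N/2}$, so the linearization of the nonlinearity at $S$ has large coefficients, and one must rely on a coercive energy functional adapted to the (time-dependent) pseudo-conformal profile, in the spirit of the modulation-theoretic analysis of Rapha\"el-Szeftel \cite{RZ11}, to transfer the exponential smallness of $R$ into $H^1$-smallness of $v_n$. The payoff of \eqref{flat} is precisely that $R$ is exponentially small in $T-t$, which is more than enough to absorb any polynomial loss coming from the linearized flow and should be the decisive ingredient for closing the estimate.
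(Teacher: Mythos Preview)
Your overall strategy is the right one and matches the paper's: solve \eqref{PVI} backward from final data $Q_T(t_n)$ with $t_n\uparrow T$, exploit the exponential smallness of the source $R$ supported in $\{|x|\geq\ell_0\}$, and pass to a limit via compactness, exactly as in Merle \cite{Me90}. The difference lies in how the uniform closeness to $Q_T$ is obtained.

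You propose to close a bootstrap for $v_n=u_n-S$ in $H^1$, and you correctly flag the serious obstacle: the linearization at $S$ has coefficients blowing up like $(T-t)^{-2(\sigma_b-2)/\sigma_b}$, and you suggest importing the modulation and coercivity machinery of Rapha\"el--Szeftel \cite{RZ11} to absorb the polynomial losses. That would probably work, but it is considerably heavier than what the paper actually does. The paper avoids $H^1$ entirely at this stage and instead controls the difference in the weighted Lebesgue norm $L^{\sigma_b}_b$. The key new ingredient is a dispersive estimate in this weighted space (Lemma~\ref{DispEst}, proved via Lorentz spaces), which gives
\[
\bigl\|S(t)[\,|\cdot|^{-b}|u|^{\sigma_b-2}v]\bigr\|_{L^{\sigma_b}_b}\lesssim |t|^{-2/\sigma_b}\|u\|_{L^{\sigma_b}_b}^{\sigma_b-2}\|v\|_{L^{\sigma_b}_b}.
\]
With this, the Duhamel formula for $v_\varepsilon-Q_T$ leads to a linear Volterra-type integral inequality with kernel $|t-s|^{-2/\sigma_b}(T-s)^{-2(\sigma_b-2)/\sigma_b}$ and forcing $e^{-\theta\ell_0/(T-s)}$. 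A purely numerical lemma in the spirit of \cite{Me90} (Lemma~\ref{MerNum}) closes the bootstrap directly, with no modulation, no energy functional, and no derivatives of the nonlinearity. The $H^1$ bound on the sequence at a fixed time $T-\varepsilon_0$ then follows a posteriori from energy conservation and the $L^{\sigma_b}_b$ bound; compactness comes from the embedding $H^1\hookrightarrow L^{\sigma_b}_b$.

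One further point you do not address: the numerical lemma requires $\ell_0$ large, so the direct argument only produces the solution when the flatness region is big enough. The paper then recovers arbitrary $\ell_0>0$ by a scaling trick, applying the large-$\ell_0$ result to $k_\lambda(x)=k(x/\lambda)$ and rescaling back. Your proposal would need an analogous reduction.
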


It is important to highlight that, in the last two theorems, the analysis primarily requires the behavior of the function $k(x)$ near the origin. The validity of the results persists even if the global maximum of $k(x)$ occurs elsewhere or this function changes sign.

%In the last two theorems only the behavior of the function $k(x)$ close to the origin is needed and even if the global maximum is outside the origin the results remain valid.

To prove Theorem \ref{MinimalBlow}, inspired by Merle \cite{Me90}, we will show that $Q_{T,\lambda}$ defined in \eqref{QT}, solution of equation \eqref{INLSc}, is close to a sequence of solutions to \eqref{PVI} and applying compactness arguments we construct the desired solution. To deal with the potential $|x|^{-b}$ we need a new dispersive estimate in weighted Sobolev spaces (see Lemma \ref{DispEst} below).

The rest of the paper is organized as follows. In Sect. \ref{Not}, we introduce some notations and preliminary estimates. In Sect. \ref{concglo}, we prove some concentration results and use it to deduce the global existence theory and the characterization of minimal mass blow-up solutions stated in Theorems \ref{Thm1} and \ref{Thm2.1}, respectively. Sect. \ref{SecBlow}, is devoted to the existence of blow-up solution and the proof of Theorems \ref{Thm1.1}-\ref{Thm2}. Finally, in Sect. \ref{MMBS}  we study the existence of minimal mass blow-up solutions.

\section{Notations and preliminary results}\label{Not}

The letter $c$ will be used to denote a positive constant that may vary from line to line. If necessary, we use subscript to indicate different parameters the constant may depends on. Given any positive quantities $a$ and $b$, the notation $a\lesssim b$ means that $a\leq cb$, with $c$ uniform with respect to the set where $a$ and $b$ vary. We write $\|\cdot\|_{L^p}$, $p\in [1,+\infty]$, to denote the usual $L^p(\mathbb{R}^N)$ norm in the space variable. If the integration is restricted to a subset $A\subset \mathbb{R}^N$ we use $\|\cdot\|_{L^p(A)}$. To simplify the notation throughout the manuscript we set
$$
\sigma_{b}=\frac{4-2b}{N}+2.
$$
Moreover, the weighted Lebesgue space $L_{b}^{\sigma_{b}}(\mathbb{R}^N)$ is defined to be the normed linear space equipped with the norm  
$$
\|f\|_{L_{b}^{\sigma_{b}}}=\left(\int |x|^{-b}|f(x)|^{\sigma_{b}}\,dx\right)^{1/\sigma_{b}}.
$$

Next, we recall some well known facts about the mass critical INLS equation \eqref{INLSc}. In this case the conserved quantities mass and energy are given, respectively, by
\begin{equation}\label{massk}
M_{k}[u]= \int|u|^{2}\,dx
\end{equation}
and
\begin{equation}\label{energyk}
E_{k}[u]=\frac{1}{2} \int|\nabla u|^{2}\,dx-\frac{k}{\sigma_{b}} \int|x|^{-b}|u|^{\sigma_{b}}\, d x.
\end{equation}
The standing wave solution $u(x,t)=e^{it}Q_k$, where $Q_{k}$ is the unique positive and radial solution of \eqref{GSeq}, plays an important role in the study of the global behavior of the solutions. Indeed, this solution is connected with the best constant for the Gagliardo-Nirenberg type inequality
\begin{equation}\label{GNS}
\int|x|^{-b}|f|^{\sigma_{b}} d x \leq \left(\frac{\sigma_b}{2k\left\|Q_{k}\right\|_{L^2}^{\frac{4-2b}{N}}}\right)\|\nabla f\|_{L^2}^{2}\|f\|_{L^2}^{\frac{4-2 b}{N}},
\end{equation}
see Genoud \cite[Corollary 2.3]{Ge12} and also Farah \cite[Theorem 1.2]{Fa16} for a more general inequality.
Thus, we deduce for all $v\in H^1(\mathbb{R}^N)$ that
\begin{align}\label{Ek-est}
E_{k}[v] & \geq \left(\frac{1}{2} \int|\nabla v|^{2} d x\right)\left(1-\left(\frac{\|v\|_{L^2}}{\left\|Q_{k}\right\|_{L^2}}\right)^{\frac{4-2 b}{N}}\right),
\end{align}
which implies that solutions for the mass critical INLS equation \eqref{INLSc} are global if the initial data satisfies $\|u_0\|_{L^2}< \left\|Q_{k}\right\|_{L^2}$.

Moreover, for all $k>0$ the solutions to \eqref{GSeq} are related according to
\begin{equation}\label{RelQk}
Q_{k}=k^{-\frac{N}{4-2 b}}Q_{1}.
\end{equation}
The Pohozaev identities are given by
$$
\left\|\nabla Q_{k}\right\|_{L^2}^{2}=\frac{N}{2-b}\left\|Q_{k}\right\|_{L^2}^{2}
$$
and
$$
k\|Q_{k}\|^{\sigma_{b}}_{L_{b}^{\sigma_{b}}}=\left(1+\frac{N}{2-b}\right)\|Q_k\|_{L^2}^2,
$$
which implies
\begin{equation}\label{EkQk}
E_k[Q_k]=0.
\end{equation}
Also note that, under the condition $k(0)=\max_{x\in \mathbb{R}^N}k(x)$, definitions \eqref{energy} and \eqref{energyk} imply for all $v\in H^1(\mathbb{R}^N)$
\begin{equation}\label{EkE}
E[v]\geq E_k[v].
\end{equation}
Decay properties of the unique positive and radial solution of equation \eqref{GSeq} were obtained by Genoud-Stuart \cite[Theorem 2.2]{GS08}, in particular, there exists ${\theta}>0$ (independent of $k>0$ by relation \eqref{RelQk}) such that 
\begin{equation}\label{DecayQ0}
\left|Q_{k}(x)\right| \leq c e^{-{\theta}|x|}, \quad \text{for all} \quad x \in \mathbb{R}^{N}.
\end{equation}

Next, we recall the following variational characterization of the ground state
\begin{prop}\label{Prop-VC}
Let $v \in H^{1}(\mathbb{R}^N)$ such that
\begin{equation}\label{VaCh}
\|v\|_{L^2}=\left\|Q_{k}\right\|_{L^2} \quad \text{and} \quad E_{k}[v]=0,
\end{equation}
then there exist $\lambda_{0}>0$ and $\gamma_{0} \in \mathbb{R}$ such that $v(x)=e^{i \gamma_{0}} \lambda_{0}^{N / 2} Q_{k}\left(\lambda_{0} x\right)$
\end{prop}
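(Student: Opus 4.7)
My plan is to show that the hypotheses force $v$ to be an extremizer of the sharp Gagliardo--Nirenberg inequality \eqref{GNS}, and then to invoke the known characterization of such extremizers via $Q_k$. Substituting the normalization $\|v\|_{L^2}=\|Q_k\|_{L^2}$ into \eqref{GNS} gives
\begin{equation*}
\int|x|^{-b}|v|^{\sigma_b}\,dx \;\leq\; \frac{\sigma_b}{2k}\|\nabla v\|_{L^2}^{2},
\end{equation*}
while the hypothesis $E_k[v]=0$ is exactly the corresponding equality after multiplication by $k/\sigma_b$. Hence $v$ attains the best constant in \eqref{GNS}.

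Next I would reduce to the case in which $v$ is, up to a constant phase, nonnegative, radial, decreasing, and centered at the origin. Let $|v|^{*}$ denote the Schwarz symmetric-decreasing rearrangement of $|v|$. The diamagnetic inequality together with P\'olya--Szeg\H{o} gives $\|\nabla |v|^{*}\|_{L^2}\leq \|\nabla v\|_{L^2}$, and since the weight $|x|^{-b}$ is itself symmetric decreasing, the Hardy--Littlewood rearrangement inequality yields
\begin{equation*}
\int|x|^{-b}|v|^{\sigma_b}\,dx \;\leq\; \int|x|^{-b}(|v|^{*})^{\sigma_b}\,dx.
\end{equation*}
Since $v$ is an extremizer, both inequalities must be equalities. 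The Brothers--Ziemer analysis of equality in P\'olya--Szeg\H{o}, combined with the observation that a translated symmetric-decreasing function would strictly decrease the weighted integral because $|x|^{-b}$ is strictly radially decreasing, forces $v(x)=e^{i\gamma_0}|v|^{*}(x)$ for some $\gamma_0\in\mathbb{R}$.

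Writing $\tilde v := e^{-i\gamma_0}v$, which is nonnegative, radial and decreasing, a Lagrange multiplier computation at the extremum of the GN quotient $\|u\|_{L^2}^{(4-2b)/N}\|\nabla u\|_{L^2}^{2}/\|u\|_{L_b^{\sigma_b}}^{\sigma_b}$ shows that $\tilde v$ satisfies
\begin{equation*}
-\Delta \tilde v + \mu\, \tilde v = c_*\, |x|^{-b}\,\tilde v^{\sigma_b-1},
\end{equation*}
with $\mu = \tfrac{4-2b}{2N}\,\|\nabla \tilde v\|_{L^2}^{2}/\|\tilde v\|_{L^2}^{2}>0$; moreover, plugging in the explicit sharp constant from \eqref{GNS} and using $\|\tilde v\|_{L^2}=\|Q_k\|_{L^2}$, one computes $c_*=k$. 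Setting $\lambda_0:=\sqrt{\mu}$, the rescaled function $w(x):=\lambda_0^{-N/2}\tilde v(\lambda_0^{-1}x)$ has the same $L^2$-norm as $\tilde v$ and satisfies \eqref{GSeq} exactly, so uniqueness of the positive radial solution (Genoud--Stuart \cite{GS08}) gives $w=Q_k$. Unwinding the rescaling yields $v(x)=e^{i\gamma_0}\lambda_0^{N/2}Q_k(\lambda_0 x)$, as claimed.

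The main obstacle is the equality analysis in the symmetrization step, especially ruling out translations of $|v|^{*}$. Here the inhomogeneity of the equation actually helps: the strict radial monotonicity of $|x|^{-b}$ anchors the center of symmetry to the origin, whereas in the unweighted case the conclusion would hold only modulo translations. A secondary technical point is the careful bookkeeping of the Lagrange multiplier and scaling constants to verify that the nonlinear coefficient equals the prescribed $k$ and that the final rescaling produces the stated form of the ground state.
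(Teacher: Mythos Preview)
Your argument is essentially correct, but it follows a genuinely different route from the paper's. The paper does not redo the extremizer analysis; it simply cites the case $k=1$ as already established by Combet--Genoud \cite{CG16} and reduces the general $k>0$ to it via the scaling relation \eqref{RelQk}: setting $v_k := k^{N/(4-2b)}v$, one checks $\|v_k\|_{L^2}=\|Q_1\|_{L^2}$ and $E_1[v_k]=0$, so the $k=1$ result gives $v_k=e^{i\gamma_0}\lambda_0^{N/2}Q_1(\lambda_0\cdot)$ and hence $v=e^{i\gamma_0}\lambda_0^{N/2}Q_k(\lambda_0\cdot)$. This is a two-line reduction once the literature is in hand. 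Your approach, by contrast, is self-contained: you reproduce the full characterization of Gagliardo--Nirenberg optimizers (diamagnetic inequality, rearrangement, Euler--Lagrange, rescaling, ground-state uniqueness) inside the proof. What you gain is independence from \cite{CG16}; what you lose is brevity, and you still need the Genoud--Stuart uniqueness theorem \cite{GS08} at the end. One small simplification of your sketch: because the weight $|x|^{-b}$ is strictly symmetric-decreasing, the equality case in the Hardy--Littlewood step already forces $|v|=|v|^{*}$ centered at the origin, so the P\'olya--Szeg\H{o} equality becomes automatic and you can dispense with the Brothers--Ziemer analysis altogether; the only equality case left to unpack is the diamagnetic one, which gives the constant phase since $\{v\neq 0\}$ is then a connected ball.
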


\begin{proof} In the particular case where $k=1$, this was proved by Combet-Genoud \cite[Proposition 2]{CG16}. For $k >0$, define $v_{k}=k^{\frac{N}{4-2b}} v$, then from assumption \eqref{VaCh} and relation \eqref{RelQk}, we have
$$
\left\|v_{k}\right\|_{L^2}=\left\|Q_{1}\right\|_{L^2} \quad \text{and} \quad E_{1}\left[v_{k}\right]=0.
$$
Therefore, since the conclusion holds for $k=1$, we deduce
$$
v_{k}(x)=e^{i \gamma_{0}} \lambda_{0}^{N / 2} Q_{1}\left(\lambda_{0} x\right) 
$$
thus, the definition of $v_{k}$ and relation \eqref{RelQk} yield
$$
v(x)=e^{i \gamma_{0}} \lambda_{0}^{N / 2} {Q_{k}\left(\lambda_{0} x\right)}.
$$
\end{proof}
A more general statement is the following
\begin{prop}\label{Prop2-VC}
Let $\{v_n\}_{n\in \mathbb N}\subset H^1(\Real^N)$ such that 
\begin{equation}\label{VaCh2}
\left\|v_{n}\right\|_{L^2} \rightarrow\left\|Q_{k}\right\|_{L^2},\quad \left\|\nabla v_{n}\right\|_{L^2} \rightarrow\left\|\nabla Q_{k}\right\|_{L^2} \quad \mbox{and}\quad \displaystyle\limsup _{n } E_{k}\left[v_{n}\right] \leq 0.
\end{equation}
Then, there exists a subsequence of $\{v_n\}$ and $\gamma_0\in \Real$ such that
$$
\lim _{n}\|v_{n}-e^{i \gamma_{0}} Q_{k}\|_{H^{1}}=0.
$$
\end{prop}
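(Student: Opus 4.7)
The plan is to extract a weakly convergent subsequence, identify its limit via Proposition \ref{Prop-VC}, and then upgrade weak $H^1$ convergence to strong $H^1$ convergence using the convergence of norms.

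First, by hypothesis $\{v_n\}$ is bounded in $H^1$, so up to a subsequence $v_n \rightharpoonup v$ weakly in $H^1$ and a.e.\ pointwise. The key technical step is to prove that the weighted nonlinear term is continuous under weak $H^1$ convergence, namely
\[
\int |x|^{-b}|v_n|^{\sigma_b}\,dx \longrightarrow \int |x|^{-b}|v|^{\sigma_b}\,dx.
\]
To see this, split the integration domain into $\{|x|<\rho\}$, $\{\rho\leq |x|\leq R\}$ and $\{|x|>R\}$. On the outer region, $|x|^{-b}\leq R^{-b}$ together with the Sobolev embedding $H^1 \hookrightarrow L^{\sigma_b}$ (the exponent $\sigma_b$ being Sobolev-subcritical) gives a tail that vanishes as $R\to\infty$, uniformly in $n$. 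Near the origin, a Hardy--Sobolev-type bound controls the singularity uniformly in $n$ and one passes to the limit via pointwise a.e.\ convergence and dominated convergence. On the annulus, $|x|^{-b}$ is bounded and Rellich--Kondrachov provides strong $L^{\sigma_b}$ convergence. I expect this to be the main obstacle; the fact that $b>0$ is essential, since the decay of the weight at infinity compensates for the loss of translation invariance and prevents mass from escaping.

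Next, from \eqref{Ek-est} applied to $v_n$, together with $\|v_n\|_{L^2}\to \|Q_k\|_{L^2}$ and the boundedness of $\|\nabla v_n\|_{L^2}$, we obtain $\liminf_n E_k[v_n]\geq 0$, which combined with the hypothesis $\limsup_n E_k[v_n]\leq 0$ yields $E_k[v_n]\to 0$. Using $\|\nabla v_n\|_{L^2}^2\to \|\nabla Q_k\|_{L^2}^2$ and \eqref{EkQk} this forces $\int|x|^{-b}|v_n|^{\sigma_b}\,dx\to \int|x|^{-b}|Q_k|^{\sigma_b}\,dx$, a positive number, so by the compactness step $v\not\equiv 0$. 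Weak lower semicontinuity gives $\|v\|_{L^2}\leq \|Q_k\|_{L^2}$ and $\|\nabla v\|_{L^2}\leq \|\nabla Q_k\|_{L^2}$, whence
\[
E_k[v] = \tfrac{1}{2}\|\nabla v\|_{L^2}^2 - \tfrac{k}{\sigma_b}\int|x|^{-b}|Q_k|^{\sigma_b}\,dx \leq E_k[Q_k] = 0.
\]
On the other hand, applying \eqref{Ek-est} to $v$ (using $v\neq 0$, hence $\|\nabla v\|_{L^2}>0$, and $\|v\|_{L^2}\leq \|Q_k\|_{L^2}$) forces $E_k[v]\geq 0$, with equality only when $\|v\|_{L^2}=\|Q_k\|_{L^2}$. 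Therefore $E_k[v]=0$ and $\|v\|_{L^2}=\|Q_k\|_{L^2}$.

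Proposition \ref{Prop-VC} then gives $v(x)=e^{i\gamma_0}\lambda_0^{N/2}Q_k(\lambda_0 x)$ for some $\lambda_0>0$ and $\gamma_0\in\mathbb{R}$. The change of variables $y=\lambda_0 x$ yields $\int |x|^{-b}|v|^{\sigma_b}\,dx=\lambda_0^{2}\int|y|^{-b}|Q_k|^{\sigma_b}\,dy$, and matching with the identification of this integral above forces $\lambda_0=1$, so $v=e^{i\gamma_0}Q_k$. Finally, $\|v_n\|_{L^2}\to \|v\|_{L^2}$ and $\|\nabla v_n\|_{L^2}\to \|\nabla v\|_{L^2}$ together with weak $H^1$ convergence upgrade to strong $H^1$ convergence, completing the proof.
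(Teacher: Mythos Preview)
Your proof is correct and follows essentially the same route as the paper's: weak $H^1$ compactness, convergence of the weighted nonlinear term, identification of the limit via Proposition~\ref{Prop-VC}, and upgrade to strong convergence via norm convergence. The paper is slightly more streamlined in that it cites the compact embedding $H^1\hookrightarrow L_b^{\sigma_b}$ (Genoud) directly rather than re-proving it by a three-region splitting, and it obtains $\lambda_0=1$ from $\|\nabla V\|_{L^2}=\|\nabla Q_k\|_{L^2}$ rather than by matching the weighted potential integral.
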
 
\begin{proof}
This is essentially Proposition 5 in Combet-Genoud \cite{CG16}. Here we give a simplified proof based on the compact embedding $H^1 \hookrightarrow L_{b}^{\sigma_b}$ (see, for instance, Genoud \cite[Lemma 2.1]{Ge12}). Since $\left\{v_n\right\}$ is a bounded sequence in ${H}^{1}$, up to a subsequence, there exists $V \in H^{1}$, such that $v_{n} \rightharpoonup V$ in $H^{1}$ and the compact embedding mentioned above yields 
\begin{equation}\label{CompEmb1}
v_{n} \rightarrow V \quad \text{in} \quad L^{\sigma_b}_b.
\end{equation}
Now, since
$$
E_{k}\left[v_{n}\right]=\frac{1}{2} \int\left|\nabla v_{n}\right|^{2}\, d x-\frac{k}{\sigma_b} \int|x|^{-b}\left|v_{n}\right|^{\sigma_b}\, d x
$$
we deduce, from \eqref{VaCh2}, that $V\neq 0$. Moreover, we apply \eqref{CompEmb1} to obtain
$$
\begin{aligned}
& 0 \geq \limsup_n \frac{1}{2} \int\left|\nabla v_{n}\right|^{2}\, d x-\frac{k}{\sigma_b} \lim_{n} \int|x|^{-b}\left|v_{n}\right|^{\sigma_b}\, d x \\
& \geq  \frac{1}{2} \int|\nabla V|^{2}\, d x-\frac{k}{\sigma_b} \int|x|^{-b}| V|^{\sigma_b} d x=E_{k}[V] \\
& \geq \frac{1}{2} \int | \nabla V |^{2}\, d x\left(1-\left(\frac{\|V\|_{L^2}}{\left\|Q_{k}\right\|_{L^2}}\right)^{\frac{4-2 b}{N}}\right),
\end{aligned}
$$
where in the last inequality we have used the Sharp Gagliardo-Nirenberg inequality \eqref{GNS}. Thus, $\|V\|_{L^2}\geq\left\|Q_{k}\right\|_{L^2}$. On the other hand, since $v_{n} \rightharpoonup V$ in $H^{1}$, assumption \eqref{VaCh2} implies
$$
\|V\|_{L^2} \leq \liminf_{n}\|v_n\|_{L^2}=\| Q_{k} \|_{L^2}\,\,\, \text { and }\,\,\,\|\nabla V\|_{L^2} \leq \liminf_n \left\|\nabla v_{n}\right\|_{L^2}=\left\|\nabla Q_{k}\right\|_{L^2}.
$$
Consequently $\|V\|_{L^2}=\left\|Q_{k}\right\|_{L^2}$ and $E_{k}[V]=0$. From Proposition \ref{Prop-VC}, there exist $\lambda_{0}>0$ and $\gamma_{0}\in \mathbb{R}$ such that
$$
V(x)=e^{i \gamma_{0}} \lambda_{0}^{N / 2} Q_{k}\left(\lambda_{0} x\right) .
$$
Now, again by \eqref{VaCh2}-\eqref{CompEmb1}, we have
$$
\begin{aligned}
0=E_{k}[V] & =\frac{1}{2} \int|\nabla V|^{2}\, d x-\frac{k}{\sigma_b} \int|x|^{-b}| V|^{\sigma_b}\, d x \\
& \leq \frac{1}{2} \int\left|\nabla Q_{k}\right|^{2} \, d x-\frac{k}{\sigma_b} \int|x|^{-b}|V|^{\sigma_b}\, d x \\
& \leq \limsup_{n} E_{k}\left[v_{n}\right] \leq 0,
\end{aligned}
$$
then $\|\nabla V\|_{L^2}=\left\|\nabla Q_{k}\right\|_{L^2}$, which implies $\lambda_{0}=1$.

Finally, since $v_{n} \rightharpoonup V$ in $H^{1}$ and $\left\|v_{n}\right\|_{H^{1}} \rightarrow\|V\|_{H^{1}}$, we must have $v_{n} \rightarrow V=e^{i \gamma_{0}} Q_{k}$ in $H^{1}$ as desired.
\end{proof}

\section{Concentration and global well-posedness}\label{concglo}

From the variational characterization of the ground state given in Proposition \ref{Prop2-VC} we are able to deduce the next concentration result, it shows that any blow up solution must in fact concentrate at least the amount $\|Q_{k(0)}\|_{L^2}$ around the origin due to the potential $|x|^{-b}$. It is worth mentioning that Campos-Cardoso \cite[Theorem 1.1]{CC22} proved an analogous result for the mass critical INLS equation \eqref{INLSc}.
\begin{thm}\label{Conc1} 
Assume ($\text{H1}$). Let $u_0\in H^1(\Real^N)$ such that the corresponding solution to the IVP \eqref{PVI} blows up in finite $T>0$. Then, if $\rho(t)\|\nabla u(t)\|_{L^2} \rightarrow + \infty$ as $t \uparrow T$ we have
$$
\liminf_{t\uparrow T} \int_{|x| \leq \rho(t)}\left|u(t)\right|^{2}\,dx \geq \|Q_{k(0)}\|_{L^2}^{2}.
$$
\end{thm}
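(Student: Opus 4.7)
The plan is to adapt the classical rescaling-and-compactness strategy of Hmidi-Keraani to the inhomogeneous setting. Let $t_n \uparrow T$ be an arbitrary sequence; by the blow-up alternative \eqref{BUAlt2}, $\|\nabla u(t_n)\|_{L^2}\to\infty$, so setting
$$\lambda_n := \frac{\|\nabla Q_{k(0)}\|_{L^2}}{\|\nabla u(t_n)\|_{L^2}},\qquad v_n(x) := \lambda_n^{N/2}u(\lambda_n x, t_n),$$
we have $\lambda_n \to 0^+$, and a change of variables together with mass conservation gives $\|v_n\|_{L^2}=\|u_0\|_{L^2}$ and $\|\nabla v_n\|_{L^2}=\|\nabla Q_{k(0)}\|_{L^2}$. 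Hence $\{v_n\}$ is bounded in $H^1$, and, up to a subsequence, $v_n\rightharpoonup V$ in $H^1$, $v_n\to V$ strongly in $L^2_{\mathrm{loc}}$ (Rellich) and in $L_{b}^{\sigma_{b}}$ (via the compact embedding already invoked in the proof of Proposition \ref{Prop2-VC}).

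Next I would rescale the energy. The substitution $y=\lambda_n x$ yields
$$\lambda_n^2 E[u_0] = \lambda_n^2 E[u(t_n)] = \frac{1}{2}\|\nabla v_n\|_{L^2}^2 - \frac{1}{\sigma_b}\int k(\lambda_n x)|x|^{-b}|v_n|^{\sigma_b}\,dx,$$
and the LHS tends to $0$. Using continuity of $k$ at $0$, boundedness of $k$, and the strong convergence $v_n\to V$ in $L_{b}^{\sigma_{b}}$, I would show
$$\int k(\lambda_n x)|x|^{-b}|v_n|^{\sigma_b}\,dx \;\longrightarrow\; k(0)\int |x|^{-b}|V|^{\sigma_b}\,dx.$$
Since $\|\nabla v_n\|_{L^2}^2 = \|\nabla Q_{k(0)}\|_{L^2}^2$ is constant, the limit of the nonlinear term is then $\tfrac{\sigma_b}{2}\|\nabla Q_{k(0)}\|_{L^2}^2 > 0$, which forces $V\not\equiv 0$. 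Weak lower semicontinuity of the gradient norm gives $E_{k(0)}[V]\le 0$, and the sharp Gagliardo-Nirenberg inequality \eqref{GNS} applied as in \eqref{Ek-est} (with $k$ replaced by $k(0)$) then implies $\|V\|_{L^2}\ge \|Q_{k(0)}\|_{L^2}$.

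To finish, I would translate this back to concentration for $u$. For any fixed $R>0$, the hypothesis $\rho(t)\|\nabla u(t)\|_{L^2}\to\infty$ yields $\rho(t_n)/\lambda_n\to\infty$, so $R\lambda_n\le \rho(t_n)$ for $n$ large, and
$$\int_{|x|\le R}|v_n|^2\,dx = \int_{|y|\le R\lambda_n}|u(y,t_n)|^2\,dy \le \int_{|y|\le \rho(t_n)}|u(y,t_n)|^2\,dy.$$
Strong $L^2_{\mathrm{loc}}$ convergence gives $\int_{|x|\le R}|V|^2\,dx \le \liminf_n \int_{|y|\le \rho(t_n)}|u(y,t_n)|^2\,dy$; letting $R\to\infty$ and invoking $\|V\|_{L^2}\ge \|Q_{k(0)}\|_{L^2}$ yields the desired inequality along the arbitrary sequence $t_n$, which is equivalent to the $\liminf$ bound in the statement.

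The main technical hurdle I foresee is the limit of $\int k(\lambda_n x)|x|^{-b}|v_n|^{\sigma_b}\,dx$: one must simultaneously handle the rescaled coefficient $k(\lambda_n x)\to k(0)$ (via continuity of $k$ at the origin plus the $L^\infty$ bound on $k$) and the strong convergence of $v_n$ to $V$ in the weighted norm, combining dominated convergence on the dominating function $|x|^{-b}|V|^{\sigma_b}\in L^1$ with the $L_{b}^{\sigma_{b}}$ compactness that distinguishes the inhomogeneous setting from the classical homogeneous Hmidi-Keraani argument.
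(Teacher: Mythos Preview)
Your proposal is correct and follows essentially the same rescaling-and-compactness route as the paper. The only minor difference is in the handling of the coefficient term: the paper bounds $\int|k(0)-k(\lambda_n x)|\,|x|^{-b}|v_n|^{\sigma_b}\,dx$ directly via a near/far split (using $|k(0)-k(\lambda_n x)|\le \lambda_n R\|\nabla k\|_{L^\infty}$ on $|x|<R$ and $|x|^{-b}\le R^{-b}$ on $|x|>R$) to get $\limsup_n E_{k(0)}[v_n]\le 0$ and then invokes the argument of Proposition~\ref{Prop2-VC}, whereas you first pass to the limit $V$ and combine strong $L_b^{\sigma_b}$ convergence with dominated convergence on $|x|^{-b}|V|^{\sigma_b}$ --- both approaches are valid under~(H1).
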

\begin{proof}
Our proof again relies on the compact embedding $H^1 \hookrightarrow L_{b}^{\sigma_b}$. Indeed, let $\{t_n\}\subset \Real$ such that $t_n\uparrow T$. For  $\lambda_n=\|\nabla Q_{k(0)}\|_{L^2}/\|\nabla u(t_n)\|_{L^2}$ define 
$$v_n(x)=\lambda_n^{\frac{N}{2}}u(\lambda_nx, t_n).$$
Then,
$$
\|v_n\|_{L^2}=\|u(t_n)\|_{L^2}=\|u_0\|_{L^2}, \quad  \|\nabla v_n\|_{L^2}=\lambda_n\|\nabla u(t_n)\|_{L^2}=\|\nabla Q_{k(0)}\|_{L^2}.
$$
Moreover, since $\lambda_n \rightarrow 0$ we have
\begin{align}
0&=\limsup_{n} \lambda_{n}^{2} E\left[u_{0}\right]=\limsup _{n} \lambda_{n}^{2} E[u(t_{n})] \\
& =\limsup_{n}\left(\frac{1}{2} \int\left|\nabla v_{n}\right|^{2}\, d x-\frac{k(0)}{\sigma_b} \int|x|^{-b}\left|v_{n}\right| ^{\sigma_b}\, d x\right) \\
&\quad  +\lim _{n}\left(\frac{\lambda_{n}^{2}}{\sigma_b} \int(k(0)-k(x)) |x|^{-b}|u(t_{n})|^{\sigma_b}\, d x\right).
\end{align}
We show that the last limit is zero. Indeed, for any $R>0$, we deduce
\begin{align}
	\frac{\lambda_{n}^{2}}{\sigma_b} \int|k(0)-k(x)| |x|^{-b}|u(t_{n})|^{\sigma_b}\, d x &\lesssim \int |k(0)-k(\lambda_nx)||x|^{-b}|v_n|^{\sigma_b}\,dx\\
	&\lesssim\frac{\|k\|_{L^{\infty}}}{R^b}\int_{|x|>R}|v_n|^{\sigma_b}\,dx+\lambda_nR\|\nabla k\|_{L^{\infty}}\int_{|x|<R}|x|^{-b}|v_n|^{\sigma_b}\,dx\\
	&\lesssim\left(\frac{1}{R^{b}}\|k\|_{L^{\infty}}+\lambda_{n}R\|\nabla k\|_{L^{\infty}}  \right)\left\|v_{n}\right\|_{H^{1}}^{\sigma_b},
\end{align} 
by Sobolev embedding and Gagliardo-Nirenberg inequality \eqref{GNS}. Since $\{v_n\}$ is a bounded sequence in $H^1$ and $\lambda_n\to 0$ we deduce the desired limit.

Thus, $\limsup_{n} E_{k(0)}\left[v_n\right]\leq 0$ and arguing as in the proof of Proposition \ref{Prop2-VC} we deduce the existence of $V \in H^{1}$, such that, up to a subsequence, $v_{n} \rightharpoonup V$ in $H^{1}$, $v_{n} \rightarrow V$ in $L^{\sigma_b}_b$ and $\|V\|_{L^2}\geq \|Q_{k(0)}\|_{L^2}$. The rest of the proof follows from classical argument (see, for instance, Hmidi-Keraani \cite[Theorem 2.1]{HK05} and Cardoso-Farah-Guzman \cite[Theorem 1.5]{CFG23}).
\end{proof}

The above theorem immediately yields our global result.
\begin{proof}[Proof of Theorem \ref{Thm1}]	
If the solution $u(t)\in H^1$ for \eqref{PVI} with $u(0)=u_0$ blows up in finite time $T>0$, then, for every $R>0$, mass conservation and the previous theorem imply
$$
\left\|u_{0}\right\|^2_{L^2} \geq \liminf_{t \uparrow T} \int_{|x|<R}\left|u(t)\right|^{2}\, d x \geq\|Q_{k(0)}\|^2_{L^2}>\left\|u_{0}\right\|^2_{L^2},
$$
reaching a contradiction.
\end{proof}

Moreover, Theorem \ref{Conc1} also implies a characterization of the minimal mass blow-up solution.
\begin{proof}[Proof of Theorem \ref{Thm2.1}]
%The limit \eqref{BUAlt} follows directly from the blow-up alternative \eqref{BUAlt2}. To prove the limit \eqref{WeakLim} we first 
First observe that, for any $R>0$, mass conservation and Theorem \ref{Conc1} imply
\begin{align}\label{WeakQ}
\|Q_{k(0)}\|_{L^2}^{2}\geq \limsup_{t \uparrow T} \int_{|x| \leq R}|u(t)|^{2}\, d x  \geq \liminf_{t \uparrow T} \int_{|x| \leq R}|u(t)|^{2}\, d x \geq\|Q_{k(0)}\|_{L^2}^{2}.
\end{align}
and therefore
\begin{align}
\lim_{t \uparrow T}\int_{|x| \leq R}|u(t)|^{2}\, d x=\|Q_{k(0)}\|_{L^2}^{2} \quad \text{and}\quad \lim_{t \uparrow T}\int_{|x| > R}|u(x, t)|^{2}\, d x=0.
\end{align}
As a consequence, we have for all $\varphi \in C_{0}^{\infty}$ that
\begin{align}
\int |u(t)|^{2} \varphi(x)\, d x =\|Q_{k(0)}\|_{L^{2}}^{2}\varphi(0)+\int|u(t)|^{2}(\varphi(x)-\varphi(0))\,d x.
\end{align}
From the continuity of $\varphi$, given any $\varepsilon>0$, there exists $\delta>0$ such that
\begin{align}
\int|u(t)|^{2}|\varphi(x)-\varphi(0)|\, d x & \leq \varepsilon \int_{|x| < \delta}|u(t)|^{2}\, d x+2\|\varphi\|_{L^{\infty}} \int_{|x|>\delta}|u(t)|^{2} \, d x
\end{align}
so, from \eqref{WeakQ}, we have 
$$
\lim_{t \uparrow T}\int|u(t)|^{2}|\varphi(x)-\varphi(0)|\, d x=0
$$
and \eqref{WeakLim} holds.
\end{proof}

We end this section with two useful corollaries. The first one is similar to Proposition 3.2 in Campos-Cardoso \cite{CC22}.
\begin{coro}\label{Cor4}
	Let $u_{n} \in H^{1}(\mathbb{R}^N)$ such that $\left\|u_{n}\right\|_{L^2} \rightarrow \|Q_{k(0)} \|_{L^2}$, $\left\|\nabla u_{n}\right\|_{L^2} \rightarrow \infty$ as $n \rightarrow+\infty$ and $E\left[u_{n}\right] \leq c$. Thus, setting $\lambda_{n}=\|\nabla Q_{k(0)}\|_{L^2}/\|\nabla u_{n}\|_{L^2}$, there exists $\gamma_n \in \mathbb{R}$ such that
\begin{equation}\label{Limun}
\lambda_{n}^{N / 2} e^{i \gamma_{n}} u_{n}\left(\lambda_{n} x\right) \rightarrow Q_{k(0)}(x)\quad  \text { in }\quad  H^{1}(\mathbb{R}^N) 
\end{equation}
and
$$
\left|u_{n}(x)\right|^{2} \rightharpoonup \|Q_{k(0)}\|_{L^2}^{2} \delta_{0}, \quad \text{ as } \quad  n \rightarrow+\infty.
$$
\end{coro}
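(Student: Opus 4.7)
The plan is to reduce the statement to the rigidity result in Proposition~\ref{Prop2-VC} via the rescaling that already appeared in the proof of Theorem~\ref{Conc1}. Set $v_n(x) := \lambda_n^{N/2} u_n(\lambda_n x)$. A change of variables gives $\|v_n\|_{L^2} = \|u_n\|_{L^2} \to \|Q_{k(0)}\|_{L^2}$ and, by the choice of $\lambda_n$, $\|\nabla v_n\|_{L^2} = \lambda_n \|\nabla u_n\|_{L^2} = \|\nabla Q_{k(0)}\|_{L^2}$. Also $\lambda_n \to 0$ since $\|\nabla u_n\|_{L^2}\to\infty$.

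Next, scaling both terms of the energy yields the identity
\[
\lambda_n^2 E[u_n] = E_{k(0)}[v_n] - \frac{1}{\sigma_b}\int (k(\lambda_n y)-k(0))|y|^{-b}|v_n|^{\sigma_b}\,dy.
\]
The left-hand side tends to $0$ since $|E[u_n]|\le c$ and $\lambda_n\to 0$, and the integral on the right vanishes in the limit by exactly the near-/far-region splitting already used in the proof of Theorem~\ref{Conc1} (relying only on the $H^1$-boundedness of $\{v_n\}$ together with $\|k\|_{L^\infty},\|\nabla k\|_{L^\infty}<\infty$). Hence $\limsup_n E_{k(0)}[v_n]\le 0$, so $\{v_n\}$ fulfills all three hypotheses of Proposition~\ref{Prop2-VC}.

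Proposition~\ref{Prop2-VC} applied to an arbitrary subsequence produces a further sub-subsequence and a phase $\gamma_0\in\mathbb{R}$ (\emph{a priori} depending on the extraction) such that $v_{n_k}\to e^{i\gamma_0}Q_{k(0)}$ in $H^1$. To upgrade this to \eqref{Limun} for the whole sequence, I would select $\gamma_n$ as a minimizer of $\gamma\mapsto \|e^{i\gamma}v_n-Q_{k(0)}\|_{H^1}$; if these distances failed to tend to zero, a subsequence-of-subsequence extraction would contradict Proposition~\ref{Prop2-VC}. This bookkeeping step is the only point requiring care; the argument is otherwise routine.

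Finally, the delta concentration follows quickly from \eqref{Limun}. It implies $|v_n|^2\to |Q_{k(0)}|^2$ in $L^1(\mathbb{R}^N)$, hence for every $R>0$, by the change of variables $y=x/\lambda_n$,
\[
\int_{|x|\le R}|u_n|^2\,dx = \int_{|y|\le R/\lambda_n}|v_n|^2\,dy \longrightarrow \|Q_{k(0)}\|_{L^2}^2,
\]
since $R/\lambda_n\to\infty$. Combined with $\|u_n\|_{L^2}^2\to\|Q_{k(0)}\|_{L^2}^2$, this forces $\int_{|x|>R}|u_n|^2\,dx\to 0$ for every $R>0$, and the weak convergence $|u_n|^2\rightharpoonup \|Q_{k(0)}\|_{L^2}^2\delta_0$ then follows by exactly the splitting $\varphi(x)=\varphi(0)+(\varphi(x)-\varphi(0))$ used at the end of the proof of Theorem~\ref{Thm2.1}.
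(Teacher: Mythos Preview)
Your proof is correct and follows essentially the same route as the paper: rescale to $v_n$, invoke the energy comparison from the proof of Theorem~\ref{Conc1} to get $\limsup_n E_{k(0)}[v_n]\le 0$, apply Proposition~\ref{Prop2-VC}, and then deduce the delta concentration from the $L^1$ convergence of $|v_n|^2$ (the paper does this last step by directly comparing $|u_n|^2$ with $\lambda_n^{-N}|Q_{k(0)}(\cdot/\lambda_n)|^2$ against a test function, which is the same computation in different clothing). Two small remarks: the hypothesis only gives $E[u_n]\le c$, not $|E[u_n]|\le c$, so you should say $\limsup_n \lambda_n^2 E[u_n]\le 0$ rather than that it tends to zero (this is all that is needed); and your explicit subsequence-and-minimizing-phase argument is actually more careful than the paper, which simply asserts that Proposition~\ref{Prop2-VC} yields \eqref{Limun}.
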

\begin{proof} Defining $v_{n}(x)=\lambda_{n}^{N / 2} u_{n}\left(\lambda_{n} x\right)$, we have
\begin{align}
	\|v_n\|_{L^2}=\|u_n\|_{L^2}\to \|Q_{k(0)}\|_{L^2}, \quad \|\nabla v_n\|_{L^2}=\|\nabla Q_{k(0)}\|_{L^2}
\end{align}
and, from the proof of Theorem \ref{Conc1},
$$
\limsup_{n}E_{k(0)}[v_n]\leq \limsup_{n}\lambda^2_nE[u_n]\leq \limsup_{n}\lambda^2_n c=0.
$$
Thus, Proposition \ref{Prop2-VC} yields the limit \eqref{Limun}. Moreover, for all $\varphi \in C_0^{\infty}$ we have
\begin{align}
\left|\int\left[\left|u_{n}(x)\right|^{2} \varphi(x)-|Q_{k(0)}(x)|^{2} \varphi(0)\right]\, d x\right| &\leq  \int\left|\left|u_{n}(x)\right|^{2}-\frac{1}{\lambda_{n}^{N}}\left|Q_{k(0)}\left(\frac{x}{\lambda_{n}}\right)\right|^{2}\right| |\varphi(x)|\, d x  \\
& \quad + \left|\int\left[\frac{1}{\lambda^N_{n}}\left|Q_{k(0)}\left(\frac{x}{\lambda_{n}}\right)\right|^2 \varphi(x)-\left|Q_{k(0)}(x)\right|^{2}\varphi(0)\right] \, d x\right|.
\end{align}
Since $\varphi \in L^{\infty}$ and $\left\{u_{n}\right\}$ is a bounded sequence in $L^{2}$, the first term in the right hand side of the last inequality goes to zero as $n\to \infty$ by \eqref{Limun}. On the other hand,
\begin{align}
\int \frac{1}{\lambda_{n}^{N}}\left|Q_{k(0)}\left(\frac{x}{\lambda_{n}}\right)\right|^{2} \varphi(x) \, dx & =\int\left|Q_{k(0)}(x)\right|^{2} \varphi\left(\lambda_{n} x\right) \, d x \\
& =\int\left|Q_{k(0)}(x)\right|^{2}\left(\varphi(\lambda_{n}x)-\varphi(0)\right) d x+\int\left|Q_{k(0)}(x)\right|^{2} \varphi(0)\, d x \\
& \rightarrow \int\left|Q_{k(0)}(x)\right|^{2} \varphi(0) \, d x,  \quad \text{as} \quad n \rightarrow+\infty,
\end{align}
by the Dominated Convergence Theorem, completing the proof.
\end{proof}

Finally, arguing as in the proof of Theorem \ref{Conc1}, we can also obtain the following concentration result.
\begin{coro}\label{Cor2} 
Let $a, b \in \mathbb{R}$ and $\left\{u_{n}\right\} \subset H^{1}(\mathbb{R}^{N})$ such that
$$
\left\|u_{n}\right\|_{L^2} \leq a, \quad E\left[u_{n}\right] \leq b,\quad \left\|\nabla u_{n}\right\|_{L^2} \rightarrow \infty,\quad  \text{as} \quad n \rightarrow+\infty.
$$
Then, for all $R>0$ we have
\begin{align}
	\liminf_{n} \int_{|x|<R}\left|u_n\right|^{2}\, d x\geq \|Q_{k(0)}\|^2_{L^2}.
\end{align}
\end{coro}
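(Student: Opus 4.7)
The plan is to mimic the proof of Theorem \ref{Conc1}, rescaling the sequence $\{u_n\}$ to concentrate the blow-up and then extracting a profile whose $L^2$ mass is at least $\|Q_{k(0)}\|_{L^2}$.

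Set $\lambda_n = \|\nabla Q_{k(0)}\|_{L^2}/\|\nabla u_n\|_{L^2}$, so that $\lambda_n \to 0$, and define $v_n(x) = \lambda_n^{N/2} u_n(\lambda_n x)$. By scaling invariance of the $L^2$ norm and the choice of $\lambda_n$, one has $\|v_n\|_{L^2} = \|u_n\|_{L^2} \leq a$ and $\|\nabla v_n\|_{L^2} = \|\nabla Q_{k(0)}\|_{L^2}$, so $\{v_n\}$ is bounded in $H^1$. The next step is to show that $\limsup_n E_{k(0)}[v_n] \leq 0$. I would write
\begin{align}
\lambda_n^2 E[u_n] = \frac{1}{2}\int |\nabla v_n|^2\,dx - \frac{k(0)}{\sigma_b} \int |x|^{-b}|v_n|^{\sigma_b}\,dx + \frac{\lambda_n^2}{\sigma_b} \int (k(0)-k(x))|x|^{-b}|u_n|^{\sigma_b}\,dx,
\end{align}
and then argue exactly as in the proof of Theorem \ref{Conc1}: the leftmost quantity satisfies $\limsup_n \lambda_n^2 E[u_n] \leq \lim_n \lambda_n^2 b = 0$, while the last integral on the right vanishes by splitting the domain at $|x|=R$, using $\|k\|_{L^\infty}$ and $\|\nabla k\|_{L^\infty}$ together with the Gagliardo-Nirenberg inequality \eqref{GNS} applied to the bounded sequence $\{v_n\}$, and then sending $\lambda_n \to 0$. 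This leaves $\limsup_n E_{k(0)}[v_n] \leq 0$.

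Now I would apply the argument used in Proposition \ref{Prop2-VC} and reproduced in the proof of Theorem \ref{Conc1}: by the compact embedding $H^1 \hookrightarrow L_b^{\sigma_b}$, there exists $V \in H^1$ and a subsequence (still denoted $\{v_n\}$) with $v_n \rightharpoonup V$ in $H^1$ and $v_n \to V$ in $L_b^{\sigma_b}$. Passing to the limit in the energy expression and invoking \eqref{GNS} yields $\|V\|_{L^2} \geq \|Q_{k(0)}\|_{L^2}$.

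Finally, to recover the concentration estimate, I change variables: for any fixed $R>0$ and any $R'>0$, since $\lambda_n \to 0$ we have $R/\lambda_n > R'$ for $n$ large, so
\begin{align}
\int_{|x|<R} |u_n|^2\,dx = \int_{|y|<R/\lambda_n} |v_n|^2\,dy \geq \int_{|y|<R'} |v_n|^2\,dy.
\end{align}
The Rellich-Kondrachov embedding $H^1_{\text{loc}} \hookrightarrow L^2_{\text{loc}}$ gives $v_n \to V$ in $L^2(\{|y|<R'\})$, hence
\begin{align}
\liminf_n \int_{|x|<R} |u_n|^2\,dx \geq \int_{|y|<R'} |V|^2\,dy.
\end{align}
Letting $R' \to \infty$ and using monotone convergence gives $\liminf_n \int_{|x|<R} |u_n|^2\,dx \geq \|V\|_{L^2}^2 \geq \|Q_{k(0)}\|_{L^2}^2$, as desired. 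The only non-routine step is the verification that $\limsup_n E_{k(0)}[v_n] \leq 0$, but this is exactly the cutoff argument already used in Theorem \ref{Conc1} and is straightforward given the uniform $H^1$ bound on $\{v_n\}$ and the hypotheses (H1) on $k$.
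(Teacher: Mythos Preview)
Your proposal is correct and follows exactly the approach indicated by the paper, which simply says to argue as in the proof of Theorem~\ref{Conc1}: rescale by $\lambda_n=\|\nabla Q_{k(0)}\|_{L^2}/\|\nabla u_n\|_{L^2}$, verify $\limsup_n E_{k(0)}[v_n]\le 0$ via the same cutoff estimate, extract a weak $H^1$ limit $V$ with $\|V\|_{L^2}\ge\|Q_{k(0)}\|_{L^2}$ by the compact embedding $H^1\hookrightarrow L_b^{\sigma_b}$, and conclude by the classical change-of-variables/local compactness argument. The only minor point is that the profile $V$ is obtained along a subsequence, so to get the full $\liminf$ one implicitly starts from a subsequence realizing it; this is standard and the paper's own argument is equally informal on this point.
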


\section{Existence of blow-up solutions}\label{SecBlow}
We start this section proving the finite blow-up for negative energy solution
\begin{proof}[Proof of Theorem \ref{Thm1.1}]
The proof is very similar to the one in Cardoso-Farah \cite{CF22}, so we only sketch the main idea. Consider the functions
\begin{align}\label{v(r)}
	v(r)=
	\left\{
	\begin{array}{ll}
		2r, &\mbox{ if } 0\leq r \leq 1\\
		2r-2(r-1)^l, &\mbox{ if } 1< r \leq 1+\left(\frac{1}{l}\right)^{\frac{1}{l-1}}\\
		\mbox{smooth and}\,\, v'<0, &\mbox{ if } 1+\left(\frac{1}{l}\right)^{\frac{1}{l-1}}< r < 2\\
		0,&\mbox{ if }r\geq 2.
	\end{array}
	\right.
\end{align}
and
\begin{align}\label{phi}
\phi(x)=\int_0^{|x|}v(s)\,ds.
\end{align}
Assume by contradiction that the corresponding solution $u(t)$ of \eqref{PVI} exists globally in time and, for $R>0$, define
\begin{align}\label{virial2}
	z_R(t)=\displaystyle\int\phi_R|u(t)|^2\,dx,
\end{align} 
where
$\phi_R(x)=R^2\phi\left(\frac{x}{R}\right)$. Some standard computation imply
\begin{align}\label{zR''}
	z_R''(t)=16E[u_0]+K_{1,R}+K_{2,R}+K_{3,R}+K_{4,R},
\end{align}
where
\begin{align}
	K_{1,R}=&-4\int \left(2-\frac{\partial_r\phi_R}{r}\right)|\nabla u(t)|^2\,dx-4\int \left(\frac{\partial_r \phi_R}{r^3}-\frac{\partial^2_r\phi_R}{r^2}\right)|x\cdot \nabla u(t)|^2\,dx,\\
	K_{2,R}=&\frac{2}{N+2-b}\int\left[(2-b)(2-\partial^2_r\phi_R)+(2N-2+b)\left(2-\frac{\partial_r \phi_R}{r}\right)\right]k(x)|x|^{-b}|u(t)|^{\sigma_b}\,dx,\\
	K_{3,R}=&-\int|u(t)|^2\Delta^2\phi_R\,dx,\\
	K_{4,R}=&\frac{2 N}{N+2-b} \int x\cdot\nabla k(x) \frac{\partial_r \phi_R}{r}|x|^{-b}\left|u(t)\right|^{\sigma_b}\,d x.
\end{align}

Note that $K_{4,R}$ is well defined by assumption (\emph{H2}) and definition \eqref{phi}. Moreover, since $\partial_r \phi(r)=v(r) \geq 0$ and $x \cdot \nabla k(x)\leq 0$, we have that $K_{4,R}\leq 0$. Additionally, since $k\in L^{\infty}$, we can follow the same steps as in the the proof of Theorem 1.1 of \cite{CF22} to control the terms $K_{j,R}$, $j=1,2,3$, and deduce, for $R\gg 1$ and $l>2/b$, that
$$
z_R''(t)\leq 8E[u_0]<0,
$$
reaching a contradiction. 

%Finally, the limit \eqref{LimThm} is a direct consequence of the blow-up alternative \eqref{BUAlt2}.

%Since $k\in L^{\infty}$, we can follow the same steps as in the the proof of Theorem 1.1 of \cite{CF22} to control the terms $K_j$, $j=1,2,3$, and deduce, for $R\gg 1$ and $l>2/b$, that
%\begin{equation}\label{virial3}
%z_R''(t)\leq 8E[u_0]+K_{4,R}.
%\end{equation}
%Finally, in view of $\partial_r \phi(r)=v(r) \geq 0$, if we assume
%$$
%x \cdot \nabla k(x)\leq 0, \quad \mbox{for all} \quad x\in \mathbb{R}^N 
%$$ 
%we have that $K_{4,R}\leq 0$ and the result follows by standard arguments.

\end{proof}

Next, we turn our attention to existence result stated in Theorem \ref{Thm2}. We start with the following preliminary lemma. 
\begin{lemma}\label{Lem1}
For all $\varepsilon \in(0,1)$, there exists a function $u_{0,\varepsilon} \in H^1(\mathbb{R}^{N})$ such that
$$
\left\|u_{0, \varepsilon}\right\|_{L^2}=\|Q_{k(0)}\|_{L^2}+\varepsilon \quad \mbox{and}\quad E\left[u_{0, \varepsilon}\right]\leq -1/\varepsilon.
$$
%\begin{enumerate}
%\item[(a)]  $\left\|u_{0, \varepsilon}\right\|_{L^2}=\|Q_{k(0)}\|_{L^2}+\varepsilon$;
%\item[(b)] $E\left[u_{0, \varepsilon}\right]\leq -A$.
%%\item[(c)] $\left\|\nabla \phi_{\varepsilon}\right\|_{L^2_x} \geq A.$
%\end{enumerate}
\end{lemma}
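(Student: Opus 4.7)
The plan is to construct $u_{0,\varepsilon}$ as a scaled multiple of the ground state $Q_{k(0)}$. Specifically, I would fix $\eta_\varepsilon = \varepsilon/\|Q_{k(0)}\|_{L^2}$ and consider, for a parameter $\lambda>0$ to be chosen large,
\begin{equation}
u_{0,\varepsilon,\lambda}(x) = (1+\eta_\varepsilon)\,\lambda^{N/2}\,Q_{k(0)}(\lambda x).
\end{equation}
The $L^2$-invariance of the scaling $f\mapsto \lambda^{N/2}f(\lambda\,\cdot)$ gives $\|u_{0,\varepsilon,\lambda}\|_{L^2}=(1+\eta_\varepsilon)\|Q_{k(0)}\|_{L^2}=\|Q_{k(0)}\|_{L^2}+\varepsilon$, so the mass condition is automatic and the whole job is to drive the energy to $-\infty$ by taking $\lambda$ large.

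Next I would compute $E[u_{0,\varepsilon,\lambda}]$ via the change of variables $y=\lambda x$. The kinetic term transforms into $\frac{(1+\eta_\varepsilon)^2\lambda^2}{2}\|\nabla Q_{k(0)}\|_{L^2}^2$, and in the potential term the factor $|x|^{-b}$ combined with the Jacobian produces a clean $\lambda^2$ as well, yielding
\begin{equation}
E[u_{0,\varepsilon,\lambda}] = \lambda^2\!\left[\frac{(1+\eta_\varepsilon)^2}{2}\|\nabla Q_{k(0)}\|_{L^2}^2 -\frac{(1+\eta_\varepsilon)^{\sigma_b}}{\sigma_b}\int k(y/\lambda)|y|^{-b}Q_{k(0)}(y)^{\sigma_b}\,dy\right].
\end{equation}
Using the Pohozaev identities for $Q_{k(0)}$ stated in Section~\ref{Not}, one has $k(0)\|Q_{k(0)}\|_{L_b^{\sigma_b}}^{\sigma_b}=\bigl(1+\tfrac{N}{2-b}\bigr)\|Q_{k(0)}\|_{L^2}^2$ and $\|\nabla Q_{k(0)}\|_{L^2}^2=\tfrac{N}{2-b}\|Q_{k(0)}\|_{L^2}^2$, which after algebraic simplification (noting $\tfrac{1}{\sigma_b}\bigl(1+\tfrac{N}{2-b}\bigr)=\tfrac{N}{2(2-b)}$) gives the identity
\begin{equation}
E[u_{0,\varepsilon,\lambda}] = \lambda^2\,\frac{N}{2(2-b)}\|Q_{k(0)}\|_{L^2}^2\Bigl[(1+\eta_\varepsilon)^2-(1+\eta_\varepsilon)^{\sigma_b}\Bigr] - \frac{(1+\eta_\varepsilon)^{\sigma_b}\lambda^2}{\sigma_b}\,\delta(\lambda),
\end{equation}
where $\delta(\lambda)=\int (k(y/\lambda)-k(0))|y|^{-b}Q_{k(0)}(y)^{\sigma_b}\,dy$.

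Since $\sigma_b>2$ and $\eta_\varepsilon>0$, the bracket $(1+\eta_\varepsilon)^2-(1+\eta_\varepsilon)^{\sigma_b}$ is a strictly negative constant $-c_\varepsilon<0$, so the first term is $-c_\varepsilon'\lambda^2$ for an $\varepsilon$-dependent positive constant. The main technical step is then to show $\delta(\lambda)\to 0$ as $\lambda\to\infty$: this follows from dominated convergence, with $k$ bounded by (\emph{H1}) and $|y|^{-b}Q_{k(0)}(y)^{\sigma_b}$ integrable thanks to the exponential decay \eqref{DecayQ0} together with $b<\min\{2,N\}$ (so the singularity at the origin is integrable). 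Thus the second term is $o(\lambda^2)$, the first term dominates for $\lambda$ sufficiently large, and one concludes $E[u_{0,\varepsilon,\lambda}]\to -\infty$. Finally, given $\varepsilon\in(0,1)$, I would simply pick $\lambda=\lambda(\varepsilon)$ large enough so that $E[u_{0,\varepsilon,\lambda}]\leq -1/\varepsilon$, and set $u_{0,\varepsilon}=u_{0,\varepsilon,\lambda(\varepsilon)}$.

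The only delicate point is quantifying the smallness of $\delta(\lambda)$ uniformly enough to beat the prescribed threshold $-1/\varepsilon$. This is not really an obstacle, however, because $\varepsilon$ is fixed first (which fixes $\eta_\varepsilon$ and the negative constant in the leading term) and only then $\lambda$ is chosen large, so the ordering of quantifiers is favorable.
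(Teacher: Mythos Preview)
Your proof is correct and follows essentially the same strategy as the paper: take a scaled multiple of $Q_{k(0)}$, use the zero-energy identity $E_{k(0)}[Q_{k(0)}]=0$ (equivalently the Pohozaev identities) to isolate the strictly negative $\lambda^2$-term coming from $(1+\eta_\varepsilon)^2-(1+\eta_\varepsilon)^{\sigma_b}<0$, and show the $k(y/\lambda)-k(0)$ remainder is lower order. The only notable difference is in how the remainder is controlled: you invoke dominated convergence (using $k\in L^\infty$ and continuity at $0$) to get $\lambda^2\delta(\lambda)=o(\lambda^2)$, whereas the paper uses the Lipschitz bound $|k(0)-k(x)|\le \|\nabla k\|_{L^\infty}|x|$ from (\emph{H1}) to obtain the sharper quantitative estimate $\int(k(0)-k(x))|x|^{-b}|f_{\lambda,\varepsilon}|^{\sigma_b}\,dx\lesssim \lambda$, giving $E\le -c_\varepsilon\lambda^2+c\lambda$. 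Either estimate suffices here since $\varepsilon$ is fixed before $\lambda$ is chosen; your normalization $\eta_\varepsilon=\varepsilon/\|Q_{k(0)}\|_{L^2}$ is in fact slightly more precise than the paper's $(1+\varepsilon)$ in matching the stated mass condition.
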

\begin{proof}
%From the property \eqref{DecayQ0}, we have that $Q_{k(0)}\in L^{\infty}$. 
For $\varepsilon, \lambda>0$ define
$$
f_{\lambda, \varepsilon}(x)=(1+\varepsilon)\lambda^{N/2}Q_{k(0)}(\lambda x).
$$
It is clear that $\|f_{\lambda, \varepsilon}\|_{L^2}=(1+\varepsilon)\|Q_{k(0)}\|_{L^2}$. %and
%$$
%\|\nabla \phi_{\varepsilon, \lambda}\|_{L^2_x}=(1+\varepsilon)\lambda \|\nabla Q_{k(0)}\|_{L^2_x}.
%$$
Moreover
\begin{align}
E\left[f_{\lambda, \varepsilon}\right]& =\frac{1}{2} \int\left|\nabla f_{\lambda, \varepsilon}\right|^{2}\, d x-\frac{1}{\sigma_b} \int k(x)|x|^{-b}\left|f_{\lambda, \varepsilon}\right|^{\sigma_b}\, d x \\
& =E_{k(0)}[f_{\lambda, \varepsilon}]+\frac{1}{\sigma_b} \int (k(0)-k(x))|x|^{-b}|f_{\lambda, \varepsilon}|^{\sigma_b}\, d x \\
& =\lambda^{2}(1+\varepsilon)^{2} E_{k(0)}[Q_{k(0)}]\\
&\quad +\lambda^2((1+\varepsilon)^{2}-(1+\varepsilon)^{\sigma_b}) k(0) \int|x|^{-b}| Q_{k(0)}(x)|^{\sigma_b}\, d x \\
& \quad +\frac{1}{\sigma_b} \int (k(0)-k(x))|x|^{-b}\left|f_{\lambda, \varepsilon}\right|^{\sigma_b}\, d x.
\end{align}
The first term in the right hand side of the last inequality is zero by \eqref{EkQk}. On the other hand, in view of the exponential decay of $Q_{k(0)}$ \eqref{DecayQ0}, we have %Moreover, for any $R>0$, we have
%\begin{align}
%\int|x|^{-b}|\lambda^{N / 2} Q_{k(0)}(\lambda x)|^{\frac{4-2 b}{N}+2} d x 
%&=\lambda^{2} \int|x|^{-b}|Q_{k(0)}(x)|^{\frac{4-2 b}{N}+2} d x\\
%& \leq \lambda^{2}\left(\|Q_{k(0)}\|_{L^\infty}^{\frac{4-2b}{N}+2} \int_{|x| \leq R}|x|^{-b} d x+\frac{1}{R^{b}} \int_{|x| \geq R}\left|Q_{k(0)}(x)\right|^{\frac{4-2 b}{N}+2} d x\right) \\
%& \leq \lambda^{2}\left(c_{1} R^{N-b}+\frac{c_{2}}{R^{b}}\right),
%\end{align}
%where $c_{1}=\|Q_{k(0)}\|_{L^\infty}^{\frac{4-2 b}{N}+2}$ and $c_{2}=\|Q_{k(0)}\|_{L^{\frac{4-2 b}{N}+2}}^{\frac{4-2 b}{N}+2}$. Talking $R>0$ such that $c_{1} R^{N-b}={c_{2}}/{R^{b}}$, there exists $c(\varepsilon)>0$ (independent of $\lambda>0$ ) such that
%$$
%\left((1+\varepsilon)^{2}-(1+\varepsilon)^{\frac{4-2b}{N}+2}\right) k(0) \int|x|^{-b}\left|\lambda^{1 / 2} Q_{k_{2}}\left(\lambda\left(x-x_{0}\right)\right)\right|^{\frac{u-2 b}{N}}+2 d x \leq-\lambda^{2} c(\varepsilon) \text {. }
%$$
\begin{align}
\int (k(0)-k(x))|x|^{-b}\left|f_{\lambda, \varepsilon}\right|^{\sigma_b} d x&\lesssim c \|\nabla k\|_{L^{\infty}}\int |x|^{1-b}\left|f_{\lambda, \varepsilon}\right|^{\sigma_b}\, d x\\
&\lesssim \lambda \|\nabla k\|_{L^{\infty}} \int |x|^{1-b}|Q_{k(0)}|^{\sigma_b}\, d x\\
& \lesssim \lambda,
\end{align}
and
$$
\lambda^2((1+\varepsilon)^{2}-(1+\varepsilon)^{\sigma_b}) k(0) \int|x|^{-b}| Q_{k(0)}( x)|^{\sigma_b}\, d x= c\lambda^{2}((1+\varepsilon)^{2}-(1+\varepsilon)^{\sigma_b}).
$$
Collecting the previous estimates, we obtain
$$
E\left[f_{\lambda, \varepsilon}\right] \leq c\lambda^{2}((1+\varepsilon)^{2}-(1+\varepsilon)^{\sigma_b})+c\lambda.
$$
Now, since $(1+\varepsilon)^{2}<(1+\varepsilon)^{\sigma_b}$, we take $\lambda=\lambda(\varepsilon)$ sufficiently large and set $u_{0, \varepsilon}=f_{\lambda(\varepsilon), \varepsilon}$ to deduce the desired result.
\end{proof}
Now we can proceed to the proof of the first part of Theorem \ref{Thm2}.
\begin{proof}[Proof of Theorem \ref{Thm2}-$(i)$]
This is an immediate consequence of Theorem \ref{Thm1.1} and Lemma \ref{Lem1}.
\end{proof}

Next we study the existence under a local maximum condition. We start with the following refined concentration result.
\begin{lemma} \label{LemConc}
Let $u_{\varepsilon}(t)$ be the solution of \eqref{PVI} with initial data $u_{0,\varepsilon} \in H^1(\mathbb{R}^{N})$ obtained in Lemma \ref{Lem1}. For all $\varepsilon'>0$, there exists $\varepsilon_{0}>0$ such that for all $\varepsilon \in\left(0, \varepsilon_{0}\right)$ and $t>0$, we have
\begin{equation}\label{Conc3}
	\left|\int_{|x|<\varepsilon^{\prime}}|u_{\varepsilon}(t)|^{2}\, d x-\int_{\Real^N}|Q_{k(0)}|^{2}\, d x \right|<\varepsilon^{\prime}.
\end{equation}
and, for every $R>0$
\begin{equation}\label{Conc4}
	\lim_{\varepsilon\to 0}\int_{|x| \geq R}\left|u_{\varepsilon}(t)\right|^{2}\, d x =0, \quad \mbox{uniformly on} \quad t>0.
\end{equation}
\end{lemma}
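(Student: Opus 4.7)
The plan is to show that the very negative energy $E[u_{\varepsilon}(t)] \leq -1/\varepsilon$ forces $\|\nabla u_{\varepsilon}(t)\|_{L^2}$ to diverge uniformly in $t$ as $\varepsilon \to 0$, and then to conclude both statements by invoking the concentration result of Corollary \ref{Cor4} through a contradiction argument.

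First, I would establish the uniform gradient blow-up. Combining energy and mass conservation, the pointwise bound $k(x) \leq \|k\|_{L^{\infty}}$, and the sharp Gagliardo--Nirenberg inequality \eqref{GNS} (applied with $k$ replaced by $k(0)$), one obtains
\begin{equation*}
-\frac{1}{\varepsilon} \geq E[u_{\varepsilon}(t)] \geq \frac{1}{2}\|\nabla u_{\varepsilon}(t)\|_{L^2}^{2}\left(1 - \frac{\|k\|_{L^{\infty}}}{k(0)}\left(\frac{\|u_{\varepsilon}(t)\|_{L^2}}{\|Q_{k(0)}\|_{L^2}}\right)^{\frac{4-2b}{N}}\right).
\end{equation*}
Using $\|u_{\varepsilon}(t)\|_{L^2} = \|Q_{k(0)}\|_{L^2} + \varepsilon$ and the fact that $\sigma_b > 2$ implies $(1 + \varepsilon/\|Q_{k(0)}\|_{L^2})^{(4-2b)/N} > 1$, the bracket on the right is strictly negative for all sufficiently small $\varepsilon$, regardless of whether $k(0)$ is a global or only local maximum of $k$. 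Rearranging then yields $\|\nabla u_{\varepsilon}(t)\|_{L^2}^{2} \gtrsim 1/\varepsilon$, uniformly in $t > 0$.

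To prove \eqref{Conc3} I would argue by contradiction: if it failed for some $\varepsilon' > 0$, there would exist sequences $\varepsilon_{n} \to 0$ and $t_{n} > 0$ with
\begin{equation*}
\left|\int_{|x| < \varepsilon'} |u_{\varepsilon_{n}}(t_{n})|^{2} \, dx - \|Q_{k(0)}\|_{L^{2}}^{2}\right| \geq \varepsilon'.
\end{equation*}
Set $u_{n} = u_{\varepsilon_{n}}(t_{n})$. By the first step, $\|\nabla u_{n}\|_{L^{2}} \to \infty$, while the conservation laws and Lemma \ref{Lem1} give $\|u_{n}\|_{L^2} = \|Q_{k(0)}\|_{L^2} + \varepsilon_n \to \|Q_{k(0)}\|_{L^2}$ and $E[u_{n}] \leq -1/\varepsilon_n \leq 0$. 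Thus $\{u_n\}$ satisfies the hypotheses of Corollary \ref{Cor4}, so $|u_{n}|^{2} \rightharpoonup \|Q_{k(0)}\|_{L^2}^{2}\delta_{0}$. Sandwiching the indicator of $\{|x| < \varepsilon'\}$ between smooth cutoffs $\psi_{1}, \psi_{2} \in C_{0}^{\infty}$ with $\psi_{1} \leq \mathbf{1}_{\{|x|<\varepsilon'\}} \leq \psi_{2}$ and $\psi_{i}(0) = 1$, and then testing against the weak limit, forces $\int_{|x|<\varepsilon'} |u_{n}|^{2} \, dx \to \|Q_{k(0)}\|_{L^2}^{2}$, contradicting the choice of the sequence.

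Finally, \eqref{Conc4} follows from \eqref{Conc3} combined with mass conservation: given $R > 0$ and $\eta > 0$, pick $\varepsilon' < \min(\eta/2, R)$, apply \eqref{Conc3} for the corresponding $\varepsilon_0$, and estimate, for all $\varepsilon < \varepsilon_0$ and every $t > 0$,
\begin{equation*}
\int_{|x| \geq R} |u_{\varepsilon}(t)|^{2} \, dx \leq \|u_{0,\varepsilon}\|_{L^2}^{2} - \int_{|x| < \varepsilon'} |u_{\varepsilon}(t)|^{2} \, dx \leq 2\varepsilon\|Q_{k(0)}\|_{L^2} + \varepsilon^{2} + \varepsilon',
\end{equation*}
which can be made smaller than $\eta$ by shrinking $\varepsilon$ further. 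The main subtlety lies in the opening step: verifying that the Gagliardo--Nirenberg bracket is negative independently of the global behavior of $k$, so that the gradient blow-up estimate is genuinely uniform in $t$. Once this is in hand, Corollary \ref{Cor4} converts the pointwise-in-$t$ concentration into the desired uniform-in-$t$ statements.
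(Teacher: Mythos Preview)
Your argument is correct and follows the same overall contradiction strategy as the paper, with two tactical differences worth noting. First, you establish the uniform gradient blow-up $\|\nabla u_\varepsilon(t)\|_{L^2}^2 \gtrsim 1/\varepsilon$ directly via the sharp Gagliardo--Nirenberg lower bound on the energy; the paper instead argues indirectly along the contradiction sequence (if $\|\nabla u_n\|_{L^2}$ stayed bounded along a subsequence, the Gagliardo--Nirenberg inequality would force $|E[u_n]|$ to be bounded, contradicting $E[u_{0,\varepsilon_n}]\leq -1/\varepsilon_n$). Second, you invoke Corollary~\ref{Cor4} (weak convergence of $|u_n|^2$ to $\|Q_{k(0)}\|_{L^2}^2\delta_0$) and then sandwich the indicator between smooth cutoffs, whereas the paper uses the weaker Corollary~\ref{Cor2} (the $\liminf$ concentration lower bound) together with mass conservation to squeeze the limit. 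Your direct gradient estimate is slightly more informative and does not require passing to a subsequence, while the paper's route via Corollary~\ref{Cor2} avoids the cutoff-sandwiching step; the derivation of \eqref{Conc4} from \eqref{Conc3} is essentially identical in both.
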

\begin{proof} Assume, by contradiction, there are $\varepsilon_{n} \rightarrow 0$, $t_{n}>0$ and $\varepsilon'>0$, such that
\begin{equation}\label{Contrad1}
\left|\int_{|x| \leq \varepsilon'}|u_{\varepsilon_{n}}(t_{n})|^{2}\, d x-\int_{\Real^N}|Q_{k(0)}|^{2}\, d x \right| \geq \varepsilon^{\prime}.
\end{equation}
Define $u_{n}(x)=u_{\varepsilon_{n}}(t_{n}, x)$. First observe that
$$
\left\|\nabla u_n\right\|_{L^2} \rightarrow+\infty, \quad \text {as}\quad  n \rightarrow+\infty.
$$
Indeed, if the above limit does not hold, then there exists a subsequence $\varepsilon_{n_k} \rightarrow 0$ such that, by Gagliardo-Nirenberg inequality \eqref{GNS}, we obtain
\begin{align}
|E[u_{0,\varepsilon_{n_k}}]|=\left|E\left[u_{n}\right]\right| \leq \frac{1}{2} \int\left|\nabla u_{n}\right|^{2} \,dx+\|k \|_{L^{\infty}} \int|x|^{-b}\left|u_{n}\right|^{\sigma_b}\, d x \lesssim 1,
\end{align}
which is a contradiction with Lemma \ref{Lem1}. Therefore, the sequence $\{u_n\}$ satisfies the assumptions of Corollary \ref{Cor2} (with $a=2\|Q_{k(0)}\|_{L^2}$ and $b=0$), and thus, for every $R>0$ we have
$$
\liminf_{n }\|u_{n}\|_{L^{2}(B(0,R))} \geq \|Q_{k(0)}\|_{L^2}.
$$
Now, from the conservation of mass and Lemma \ref{Lem1}, we deduce
$$
\|u_{n}\|_{L^2}=\|u_{0,\varepsilon}\|_{L^2} \rightarrow \|Q_{k(0)}\|_{L^2}.
$$
The last two relations imply for every $R>0$ that
$$
\lim_{n} \left(\|u_{n}\|_{L^{2}(B(0,R))}-\|Q_{k(0)}\|_{L^2}\right)=0,
$$ 
reaching a contradiction with \eqref{Contrad1} and therefore \eqref{Conc3} holds.

Next, given $R>0$ and $\varepsilon'<R$ we have
\begin{align}
	\int_{|x| \geq R}|u_{\varepsilon}(t)|^{2}\,dx &\leq \int_{|x| \geq \varepsilon'}|u_{\varepsilon}(t)|^{2}\,dx\\
	&\leq\left|\int_{\mathbb{R}^{N}}| u_{\varepsilon}(t)|^{2}\,d x-\int_{\Real^{N}}|Q_{k(0)}|^{2}\, d x\right|+\left| \int_{|x|<\varepsilon'}\left|u_{\varepsilon}(t)\right|^{2}\,d x-\int_{\mathbb{R}^{N}}| Q_{k(0)}|^{2}\,d x\right|
\end{align}
which implies the limit \eqref{Conc4} in view of \eqref{Conc3} and Lemma \ref{Lem1}.
\end{proof}

The next lemma is also important in our calculations to control the potential energy.
\begin{lemma}\label{GNRef}
For any $A>0$ and $f \in H^{1}(\Real^{N})$, we have
$$
\int_{|x| \geq A}|x|^{-b}|f|^{\sigma_b}\, d x \lesssim\left(\int_{|x| \geq A / 2}|f|^{2}\, d x\right)^{\frac{2-b}{N}}\left(\int_{A \geq|x| \geq A / 2}|f|^{2}\, d x+\int_{|x| \geq A / 2}|\nabla f|^{2}\, d x\right).
$$
\end{lemma}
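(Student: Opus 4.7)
The plan is to reduce the weighted estimate on the exterior region $\{|x|\geq A\}$ to the global sharp Gagliardo--Nirenberg inequality \eqref{GNS} via a cutoff argument. First I would fix a smooth radial cutoff $\chi\in C^{\infty}(\mathbb{R}^N)$ with $\chi(x)=1$ for $|x|\geq A$, $\chi(x)=0$ for $|x|\leq A/2$, and $\|\nabla\chi\|_{L^\infty}\lesssim 1/A$. Since $\chi f=f$ on $\{|x|\geq A\}$, the left-hand side of the lemma is immediately bounded by $\int|x|^{-b}|\chi f|^{\sigma_b}\,dx$, and the whole problem has been transferred to a global weighted $L^{\sigma_b}_b$ estimate for $\chi f\in H^1(\mathbb{R}^N)$.

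I would then apply \eqref{GNS} to $g=\chi f$ to obtain
$$
\int|x|^{-b}|\chi f|^{\sigma_b}\,dx\lesssim \|\nabla(\chi f)\|_{L^2}^{2}\,\|\chi f\|_{L^2}^{(4-2b)/N}.
$$
Since $0\leq\chi\leq 1$ and $\mathrm{supp}\,\chi\subset\{|x|\geq A/2\}$, the mass factor is bounded at once by
$$
\|\chi f\|_{L^2}^{(4-2b)/N}=\bigl(\|\chi f\|_{L^2}^{2}\bigr)^{(2-b)/N}\leq\Bigl(\int_{|x|\geq A/2}|f|^{2}\,dx\Bigr)^{(2-b)/N},
$$
matching the first factor on the right-hand side of the lemma.

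For the gradient factor I would expand $\nabla(\chi f)=\chi\nabla f+f\nabla\chi$ and use $(a+b)^{2}\leq 2a^{2}+2b^{2}$. The piece $\|\chi\nabla f\|_{L^2}^{2}$ is directly bounded by $\int_{|x|\geq A/2}|\nabla f|^{2}\,dx$, which is the second summand appearing in the second factor of the lemma. The commutator piece $\|f\nabla\chi\|_{L^2}^{2}$ is supported on the annulus $\{A/2\leq|x|\leq A\}$ because that is where $\nabla\chi\neq 0$, and using $|\nabla\chi|\lesssim 1/A$ it produces a contribution controlled by $\int_{A/2\leq|x|\leq A}|f|^{2}\,dx$ (with the $A$-dependent constant absorbed into the implicit $\lesssim$). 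Multiplying the two estimates then yields the announced inequality.

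The main subtle point is precisely the control of this commutator term $\|f\nabla\chi\|_{L^2}^{2}$: the cutoff must be designed so that $\nabla\chi$ concentrates exactly on the thin annulus $\{A/2\leq|x|\leq A\}$, so that the localization error of the argument only involves the $L^2$-mass of $f$ on that annulus rather than on the full exterior region $\{|x|\geq A/2\}$; this is what makes the second factor of the lemma's right-hand side sharper than a naïve $H^1$ norm on the exterior.
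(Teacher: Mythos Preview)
Your argument is correct and is exactly the approach taken in the paper: introduce a smooth cutoff supported in $\{|x|\geq A/2\}$ that equals $1$ on $\{|x|\geq A\}$, apply the global Gagliardo--Nirenberg inequality \eqref{GNS} to the product, and expand the gradient via the product rule so that the commutator term is localized on the annulus. The paper's proof is more terse (it simply writes ``the definition of $\eta$ and the product rule imply the desired inequality''), but the content is identical to what you spell out.
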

\begin{proof} Let $\eta \in C^{\infty}$ such that 
$$
\eta(x)=
\left\{
\begin{array}{ll}0, & |x| \leq A/2 \\ 1, & |x| \geq A 
\end{array} \quad \mbox{and} \quad  0 \leq \eta \leq 1.
\right.
$$ 
It is clear that $\eta, \, |\nabla \eta|\in L^{\infty}$. Then, an application of the Gagliardo-Nirenberg inequality \eqref{GNS} yields
$$
\begin{aligned}
	\int_{|x|\geq A}|x|^{-b}|f|^{\sigma_b}\, d x & \leq \int|x|^{-b}|\eta f|^{\sigma_b}\, d x \\
	& \lesssim\left(\int|\eta f|^{2}\, d x\right)^{\frac{2-b}{N}}\left(\int|\nabla(\eta f)|^{2}\, d x\right).
\end{aligned}
$$
The definition of $\eta$ and the product rule imply the desired inequality.
\end{proof}

Next, we proof our existence result assuming a local strict maximum condition at the origin.
\begin{proof}[Proof of Theorem \ref{Thm2}-$(ii)$]
Now, let $\ell_{0}>0$ and assume that
\begin{equation}\label{LocCond12}
x \cdot \nabla k(x)<0, \quad \text {for all}\quad 0<|x|<\ell_{0}.
\end{equation}
Reducing $\ell_{0}$ if necessary, there exists $\bar{c}>0$, such that
\begin{equation}\label{LocCond2}
x \cdot \nabla k(x) \leq -\bar{c}<0,\text{ for all } \ell_{0} / 4<|x|<\ell_{0}<1.
\end{equation}

For any $R>0$ consider the function $\phi_R(x)=R^2\phi({x}/{R})$ and define
\begin{align}\label{virial21}
z_{R}(t)=\displaystyle\int \phi_{R}|u_{\varepsilon}(t)|^{2}\, d x.
\end{align}
Arguing as in the proof of Theorem \ref{Thm1.1}, we obtain, for $R\gg 1$ and $l>2/b$, that 
\begin{align}\label{virial3}
	z''_R(t)\leq 8E[u_{0,\varepsilon}]+K_{4,R}(t),
\end{align}
where 
\begin{align}\label{Pot-term}
	K_{4, R}(t) & =\frac{2 N}{N+2-b} \int x \cdot \nabla k(x) \frac{\partial_{r} \phi_{R}}{r}|x|^{-b}\left|u_{\varepsilon}(t)\right|^{\sigma_b} \, d x \nonumber \\
	& =\frac{2 N}{N+2-b}\left[2 \int_{|x| \leq \ell_{0}} x \cdot \nabla k(x)|x|^{-b}\left|u_{\varepsilon}(t)\right|^{\sigma_b}\, d x\right. \nonumber\\
	&\left.\quad\quad+\int_{|x| \geq \ell_{0}} x \cdot \nabla k(x) \frac{\partial_{r}\phi_{R}}{r}|x|^{-b}\left|u_{\varepsilon}(t)\right|^{\sigma_b}\, d x\right],
\end{align}
and in the last line we have used the definition \eqref{phi}. 

Since $x \cdot \nabla k(x),\,\, {\partial_r \phi_{R}}/{r} \in L^{\infty}$, we obtain from Lemma \ref{GNRef} that
\begin{align}\label{Pot-term2}
&\left|\int_{|x|\geq \ell_{0}}x\cdot\nabla k(x)\frac{\partial_r \phi_R}{r}|x|^{-b}|u_\varepsilon (t)|^{\sigma_b}\,dx\right|\\
& \quad  \leq c\left(\int_{|x|\geq \ell_{0}/2}|u_\varepsilon(t)|^2\,dx\right)^{\frac{2-b}{N}}\left(\int_{\ell_{0} \geq |x| \geq \ell_{0} / 2}|u_\varepsilon(t)|^{2}\, d x+\int_{|x| \geq \ell_{0} / 2}|\nabla u_\varepsilon(t)|^{2}\, d x\right).
\end{align}
Now, we use the viral identity \eqref{zR''} with $R=\ell_0/4$ to control the $L^{2}$-norm of the gradient outside the origin. Indeed, defining $\phi_{0}(x)=({\ell_0}/{4})^{2} \phi({4|x|}/{\ell_0})$ and 
$$
z_{0}(t)=\int \phi_{0}(x)\left|u_{\varepsilon}(t)\right|^{2}\, d x
$$
we have, see \eqref{zR''}, that
\begin{align}
4 \int\left(2-\frac{\partial_r\phi_{0}}{r}\right) |\nabla u_\varepsilon(t)|^2\, dx & =
16 E[u_{0,\varepsilon}]-\int\left|u_{\varepsilon}(t)\right|^{2} \Delta^{2} \phi_{0}\,dx-z_{0}''(t) \\
	& \quad -4\int\left(\frac{\partial_r \phi_{0}}{r^{3}}-\frac{\partial_{r}^{2} \phi_{0}}{r^{2}}\right)\left|x \cdot \nabla u_{\varepsilon}(t)\right|^{2} d x \\
	& \quad +\frac{2}{N+2-b} \int\Phi_0(x) k(x)|x|^{-b}\left|u_{\varepsilon}(t)\right|^{\sigma_b}\,dx \\
	& \quad +\frac{2 N}{N+2-b} \int x \cdot \nabla k(x) \frac{\partial_r\phi_0}{r}|x|^{-b}\left|u_{\varepsilon}(t)\right|^{\sigma_b}\,dx,
\end{align}
where $\Phi_0=(2-b)\left(2-\partial_{r}^{2} \phi_{0}\right)+(2 N-2+b)\left(2-\frac{\partial_r \phi_{0}}{r}\right)$.\\

By construction, we have $\partial_{r} \phi_0(r)-r \partial_{r}^{2}\phi_{0}(r) \geq 0$, for all $r=|x| \in \mathbb{R}$ (see inequality (3.4) in Cardoso-Farah \cite{CF22}), therefore the second line is negative. Moreover $\partial_r\phi_0(r)\geq 0$ and $\text{supp}(\partial_r \phi_0)\subset [0,\ell_0/2]$, so the last line is also negative in view of the local condition \eqref{LocCond12}. In addition, since ${\partial_r \phi_{0}(r)}\leq 2r$, $\text{supp}\left(2-\frac{\partial_r\phi_{0}}{r} \right) \subseteq[\ell_0/4,+\infty)$, $\partial_r\phi_{0}(r)=0$ for $r>\ell_0/2$ and mass conservation, there exists $c>0$ such that
\begin{align}
8 \int_{|x|\geq \ell_0/2} |\nabla u_\varepsilon(t)|^2\,dx &\leq 4 \int\left(2-\frac{\partial_{r}\phi_{0}}{r}\right)|\nabla u_{\varepsilon}(t)|^{2}\,dx\\
& \leq 16 E[u_{0,\varepsilon}] +c-z_{0}''(t) \\
	& \quad +\frac{2}{N+2-b} \int\Phi_0(x) k(x)|x|^{-b}\left|u_{\varepsilon}(t)\right|^{\sigma_b}\, dx.
\end{align}
Since $\text{supp}(\Phi_{0}) \subset [\ell_0/4,+\infty)$, we use the local relation \eqref{LocCond2} and Lemma \ref{GNRef} to estimate the last term as
\begin{align}
	\int\Phi_0(x) k(x)|x|^{-b}\left|u_{\varepsilon}(t)\right|^{\sigma_b}\, dx & \lesssim \int_{\ell_0 / 4\leq |x|\leq \ell_0}|x|^{-b}\left|u_{\varepsilon}(t)\right|^{\sigma_b}\,dx+\int_{|x| \geq \ell_0}|x|^{-b}\left|u_{\varepsilon}(t)\right|^{\sigma_b}\, d x \\
	& \lesssim -\int_{|x| \leq \ell_0} x \cdot \nabla k(x)|x|^{-b}\left|u_{\varepsilon}(t)\right|^{\sigma_b}\, d x\\
	& \quad +c(\varepsilon)\left(\int_{\ell_0 \geq|x| \geq \ell_0 / 2}\left|u_{\varepsilon}(t)\right|^{2}\, d x+\int_{|x| \geq \ell_0 / 2}\left|\nabla u_{\varepsilon}(t)\right|^{2}\, d x\right),
\end{align}
where
$$
c(\varepsilon)=c\left(\int_{|x| \geq \ell_0 / 2}\left|u_{\varepsilon}(t)\right|^{2}\, d x\right)^{\frac{2- b}{N}}.
$$
Therefore, collecting the last two estimates we deduce, from mass conservation, that
\begin{align}\label{Grad-term}
8 \int_{|x|\geq \ell_0/2}\left|\nabla u_{\varepsilon}(t)\right|^{2}\, dx &\leq 16 E[u_{0,\varepsilon}] -z_{0}''(t)+c-c\int_{|x| \leq \ell_0} x \cdot \nabla k(x)|x|^{-b}|u_{\varepsilon}(t)|^{\sigma_b} \,dx \nonumber  \\ 
&\quad +c(\varepsilon) \int_{|x| \geq \ell_0 / 2}|\nabla u_{\varepsilon}(t)|^{2}\,dx.
\end{align}
From \eqref{Conc4}, we have that $c(\varepsilon)\rightarrow 0$, as  $\varepsilon \rightarrow 0$ (uniformly on $t>0$) and taking $\varepsilon>0$ sufficient small we can absorb the last term in the left hand side of the previous inequality. 

Thus, in view of \eqref{virial3}-\eqref{Grad-term}, we finally have
\begin{align}
	z_{R}''(t) & \leq c E\left[u_{0,\varepsilon}\right]+c(\varepsilon) -c(\varepsilon)z_{0}''(t) \\
	& \quad +(c-c(\varepsilon))\int_{|x| \leq \ell_0} x \cdot \nabla k(x)| x|^{-b}\left|u_{\varepsilon}(t)\right|^{\sigma_b}\,dx \\
	& \leq c E\left[u_{0,\varepsilon}\right]+c(\varepsilon)-c(\varepsilon) z_{0}''(t)\\
	&\quad +\frac{c}{2} \int_{|x| \leq \ell_0} x \cdot \nabla k(x)|x|^{-b}\left|u_{\varepsilon}(t)\right|^{\sigma_b} \,d x, 
\end{align}
since we can take $\varepsilon>0$ sufficiently small such that $c(\varepsilon)\ll 1$. 

Therefore, from the local assumption \eqref{LocCond2}, we can discard the integral in the right hand side of the last inequality, to deduce, for all $\varepsilon>0$ sufficiently small, that
$$
z''_R(t)+c(\varepsilon) z_{0}''(t) \leq c E\left[u_{0,\varepsilon}\right]+c(\varepsilon)
$$
and since $E\left[u_{0,\varepsilon}\right] \leq -1/\varepsilon$ (see Lemma \ref{Lem1}) we conclude the desired result by standard arguments.
\end{proof}

\section{Minimal mass blow-up solution}\label{MMBS}
We start this section with the proof of our non-existence result.
\begin{proof}[Proof of Theorem \ref{Thm3}]
We first observe that
\begin{align}\label{Eu0}
E[u_0]&\geq E_{k(0)}[u(t)]+c\int (k(0)-k(x))|x|^{-b}|u(t)|^{\sigma_b}\, dx\nonumber \\
&\geq c\int (k(0)-k(x))|x|^{-b}|u(t)|^{\sigma_b}\,dx,
\end{align}
where in the last inequality we have used \eqref{Ek-est} and $\|u(t)\|_{L^2}=\|Q_{k(0)}\|_{L^2}$.

Assume that there exists a minimal mass blow-up solution with finite maximal existence time $T>0$, then Corollary \ref{Cor4} implies that
\begin{equation}\label{EstCor45}
\lambda(t)^{N / 2} e^{i \gamma(t)} u\left(\lambda(t) x, t\right) \rightarrow Q_{k(0)}(x)\quad  \text { in }\quad  H^{1}  \quad  \text { as }\quad t\to T,
\end{equation}
where $\lambda(t)=\|\nabla Q_{k(0)}\|_{L^2}/\|\nabla u(t)\|_{L^2} \to 0$ and $\gamma(t) \in \mathbb{R}$.

Next we show that the integral in \eqref{Eu0} outside the origin goes to zero as $t\to T$. Indeed, since $k\in L^{\infty}$, for a fixed $R>0$ we have
\begin{align}\label{xR}
\left|\int_{|x|\geq R} (k(0)-k(x))|x|^{-b}|u(t)|^{\sigma_b}\,dx\right|&\lesssim \int_{|x|\geq R} |x|^{-b}|u(t)|^{\sigma_b}\,dx \nonumber\\
&\lesssim \lambda(t)^{-2}\int_{|x|\geq R/\lambda(t)}|x|^{-b}|\lambda(t)^{N / 2} e^{i \gamma(t)} u(\lambda(t) x, t)|^{\sigma_b} \, dx.
\end{align}
Note that
\begin{align}
\left|\|\lambda(t)^{N / 2} e^{i \gamma(t)} u(\lambda(t) \cdot , t)\|_{L^{\sigma_b}_b(|x|\geq R/\lambda(t))}-\|Q_{k(0)}\|_{L^{\sigma_b}_b(|x|\geq R/\lambda(t))} \right|&\leq \|\lambda(t)^{N / 2} e^{i \gamma(t)} u(\lambda(t) \cdot , t)-Q_{k(0)}\|_{L^{\sigma_b}_b}\\
&\to 0, \quad  \text { as }\quad t\to T,
\end{align}
by \eqref{EstCor45} and the Gagliardo-Nirenberg inequality \eqref{GNS}. Moreover, the decay \eqref{DecayQ0} implies
\begin{align}
\lambda(t)^{-2}\int_{|x|\geq R/\lambda(t)}|x|^{-b}|Q_{k(0)}|^{\sigma_b} \, dx&\lesssim  \, \frac{\lambda(t)^{-2+b}}{R^b}\int_{|x|\geq R/\lambda(t)}e^{-\sigma_b\theta|x|} \, dx\\
&\lesssim \frac{\lambda(t)^{-2+b}}{R^b}e^{-c\frac{R}{\lambda(t)}}
\end{align}
and then the integral in \eqref{xR} goes to zero as $t\to T$.

On the other hand, for $t$ close enough to $T$ such that $\ell_0/\lambda(t)\gg 1$, we have from \eqref{alpha0} and similar computations that
\begin{align}\label{EstEu0}
\int_{|x|<\ell_0} (k(0)-k(x))|x|^{-b}|u(t)|^{\sigma_b}\, dx&\gtrsim \int_{|x|<\ell_0}|x|^{1+\alpha_0}|x|^{-b}|u(t)|^{\sigma_b} \, dx \nonumber \\
&\gtrsim \lambda(t)^{\alpha_0-1}\int_{|x|<\ell_0/\lambda(t)}|x|^{1+\alpha_0}|x|^{-b}|\lambda(t)^{N / 2} e^{i \gamma(t)} u(\lambda(t) x, t)|^{\sigma_b} \, dx \nonumber\\
&\gtrsim \lambda(t)^{\alpha_0-1}\int_{|x|<1}|x|^{-b}|Q_{k(0)}|^{\sigma_b} \, dx.
\end{align}
Therefore, the integral in \eqref{EstEu0} goes to infinity as $t\to T$, since $\lambda(t)\to 0$ and $0<\alpha_0<1$, reaching a contradiction with \eqref{Eu0}.\\
\end{proof}

Next, we turn to the existence of minimal mass blow-up solution under a flatness condition on $k(x)$ around the origin given in \eqref{flat}. Recalling the pseudo-conformal symmetry \eqref{QT}, for a fixed $T>0$, define
\begin{equation}\label{QT2}
Q_{T}\left(x,t\right)=\left(\frac{1}{T-t}\right)^{N / 2} e^{i  /(T-t){-i|x|^{2} / 4(T-t)}} Q_{k(0)}\left(\frac{ x}{T-t}\right).
\end{equation}
Direct computations imply that $\|Q_{T}\|_{L^2}=\|Q_{k(0)}\|_{L^2}$ and $\lim_{t\to T}\|\nabla Q_{T}\|_{L^2}=\infty$, however $Q_{T}$ is not a solution of \eqref{PVI}. Inspired by Merle \cite{Me90}, we will show that $Q_{T}$ is close to a sequence of solutions to \eqref{PVI} and applying compactness arguments we construct a minimal mass blow-up solution that blows up at time $T>0$. To this end we need the following new dispersive estimate in weighted Sobolev spaces.

\begin{lemma}\label{DispEst}
Let $0<b<N$ and $\sigma_{b}=\frac{4-2 b}{N}+2$. Then, for all $t \neq 0$ we have the dispersive estimate
$$
\left\| S(t)\left[|\cdot|^{-b}|f|^{\sigma_{b}-1}\right]\right\|_{L_{b}^{\sigma_{b}}} \lesssim |t|^{-2/\sigma_{b}}\left\| f \right\|_{L_{b}^{\sigma_{b}}}^{\sigma_{b}-1}.
$$
\end{lemma}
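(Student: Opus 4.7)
The plan is to combine the explicit kernel formula for the Schr\"odinger propagator with a weighted Hausdorff--Young (Pitt) inequality. I would first expand the quadratic phase in the Schr\"odinger kernel to obtain the standard identity
\[
e^{it\Delta}g(x) \;=\; c_N\,|t|^{-N/2}\,e^{i|x|^2/(4t)}\,\mathcal{F}[M_t g]\!\left(\tfrac{x}{2t}\right), \qquad M_t g(y) := e^{i|y|^2/(4t)}g(y),
\]
so that $|S(t)g(x)|$ is pointwise a rescaled Fourier transform of the unitary-modulated function $M_t g$, in particular $|M_t g|=|g|$. Inserting this identity into $\|S(t)g\|_{L^{\sigma_b}_b}^{\sigma_b}$ and performing the change of variables $y=x/(2t)$ produces a weight-with-Jacobian factor of size $|t|^{N-b}$, which combines with the $|t|^{-N\sigma_b/2}$ coming from the kernel. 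Using the identity $N\sigma_b/2 = N+2-b$, the net power of $|t|$ reduces to $-2$, yielding the reduction
\[
\|S(t)g\|_{L^{\sigma_b}_b} \;=\; c\,|t|^{-2/\sigma_b}\,\|\mathcal{F}[M_t g]\|_{L^{\sigma_b}_b}.
\]

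It then remains to establish the $t$-independent weighted Fourier bound $\|\mathcal{F}[M_t g]\|_{L^{\sigma_b}_b}\lesssim\|f\|_{L^{\sigma_b}_b}^{\sigma_b-1}$. Since $|M_t g|=|g|$, I would invoke Pitt's inequality in the form
\[
\|\,|\xi|^{-\beta}\mathcal{F}h\,\|_{L^{p'}}\;\lesssim\;\|\,|x|^{\beta}h\,\|_{L^{p}}, \qquad 1<p\le 2,\ \ 0\le\beta<N/p,
\]
specialized to $p=\sigma_b'$, $p'=\sigma_b$, and $\beta=b/\sigma_b$. The scaling balance is automatic (with $\alpha=\beta$ and $1/p+1/p'=1$), while the admissibility $\beta<N/p$ becomes $3b<N+4$, which follows from the standing assumption $b<\min\{2,N\}$; this same assumption gives $\sigma_b>2$, hence $p<2$. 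Applied with $h=M_t g$, Pitt's inequality yields $\|\mathcal{F}[M_t g]\|_{L^{\sigma_b}_b}\lesssim \|\,|x|^{b/\sigma_b}g\,\|_{L^{\sigma_b'}}$. A short algebraic manipulation, using $g=|x|^{-b}|f|^{\sigma_b-1}$ together with the duality identities $(\sigma_b-1)\sigma_b' = \sigma_b$ and $b-b/\sigma_b = b/\sigma_b'$, identifies this last quantity with $\|f\|_{L^{\sigma_b}_b}^{\sigma_b-1}$. Chaining the estimates gives the lemma.

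The main obstacle will be locating and applying the right version of Pitt's inequality: the $L^{p}\to L^{p'}$ form used above is less standard than the sharp $L^2$ Pitt inequality of Beckner, and one must either cite a suitable reference (for instance, a Stein--Weiss type theorem for the Fourier transform, or a Beckner/Samko-type extension) or derive it by complex interpolation between the unweighted Hausdorff--Young bound $\|\mathcal{F}h\|_{L^{\sigma_b}}\lesssim\|h\|_{L^{\sigma_b'}}$ and the classical $L^2$ Pitt inequality $\|\,|\xi|^{-\beta}\mathcal{F}h\,\|_{L^{2}}\lesssim\|\,|x|^{\beta}h\,\|_{L^{2}}$. Once this ingredient is secured, the remaining steps (the kernel identity, the change of variables, and the exponent bookkeeping) are essentially routine.
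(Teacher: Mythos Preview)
Your argument is correct and takes a genuinely different route from the paper's. The paper works entirely in Lorentz spaces: it invokes the dispersive bound $\|S(t)h\|_{L^{p,q}}\lesssim|t|^{-\frac{N}{2}(\frac{1}{p'}-\frac{1}{p})}\|h\|_{L^{p',q}}$ of Braz e Silva--Ferreira--Villamizar-Roa, observes that $|x|^{-b}\in L^{N/b,\infty}$, and then chains O'Neil's H\"older inequality in $L^{p,q}$ to pass weights in and out of the propagator. Your approach instead exploits the explicit Schr\"odinger kernel to rewrite $|S(t)g|$ as a rescaled Fourier transform of a modulated function, after which the whole estimate reduces to a single weighted Hausdorff--Young (Pitt/Stein--Weiss) inequality on the Fourier side. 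Your route is arguably more elementary---no Lorentz machinery---at the price of relying on the specific structure of the free Schr\"odinger evolution; the paper's Lorentz-space framework is more abstract but would transfer verbatim to any group satisfying an $L^{p',q}\to L^{p,q}$ dispersive bound.

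Two small remarks. First, the sharp admissibility condition in the $L^{p}\!\to\!L^{p'}$ Pitt inequality is $0\le\beta<N/p'$ (not $\beta<N/p$), which here reads $b<N$ and is exactly the lemma's hypothesis; your stated condition $3b<N+4$ is weaker and happens to hold anyway, but you should quote the correct one. Second, the interpolation you sketch between unweighted Hausdorff--Young at $(\sigma_b',\sigma_b)$ and the $L^2$ Pitt inequality cannot recover the endpoint $p=\sigma_b'$ with a nonzero weight (taking $\theta>0$ moves the domain exponent strictly above $\sigma_b'$). This is harmless because the $L^p\!\to\!L^{p'}$ Pitt inequality is a classical result that can be cited directly (e.g.\ Stein--Weiss or Benedetto--Heinig), or proved via Stein's complex interpolation with the analytic family $T_z h=|\xi|^{-z}\,\widehat{|x|^{z}h}$ between the $L^1\!\to\!L^\infty$ and $L^2\!\to\!L^2$ endpoints; just make sure to point to a correct derivation.
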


\begin{proof}
We present a proof based on Lorentz spaces and we briefly review some theory on this topic. For $1 < p < \infty$ and $1 \leq q \leq \infty$ define 
$$
\displaystyle
\|f\|_{L^{p, q}}= \begin{cases}\left( \int_0^{\infty}\left[t^{\frac{1}{p}} f^{*}(t)\right]^q \,\frac{d t}{t}\right)^{\frac{1}{q}} &  \text { if } \quad 1 \leq q<\infty, \\ \sup _{t>0} t^{\frac{1}{p}} f^{*}(t) &  \text { if } \quad  q=\infty,\end{cases}
$$
where the $f^{*}$ is the decreasing
rearrangement of $f$ defined for $t>0$ by
$$
f^*(t)=\inf \left\{s>0 ; m\left\{x \in \mathbb{R}^N:|f(x)|>s\right\} \leq t\right\}.
$$
$L^{p, q}(\mathbb{R}^N)$ denotes the set of all functions $f:\mathbb{R}^N\to \mathbb{C}$ with finite $\|f\|_{L^{p, q}}$. It is a quasi-Banach spaces (see Grafakos \cite[Theorem 1.4.11]{Gr14}). Moreover, considering
$$
f^{* *}(t)=\frac{1}{t} \int_0^t f^*(s) \, d s, \quad \textrm{for}\quad  t>0,
$$
these spaces become Banach spaces with the norm given by
$$
\displaystyle
|||f|||_{L^{p, q}}= \begin{cases}\left( \int_0^{\infty}\left[t^{\frac{1}{p}} f^{**}(t)\right]^q \, \frac{d t}{t}\right)^{\frac{1}{q}} & , \text { if } \quad 1 \leq q<\infty, \\ \sup _{t>0} t^{\frac{1}{p}} f^{**}(t) & , \text { if } \quad  q=\infty,\end{cases}
$$
(see O'Neil \cite[Definition 2.1., page 136]{ON63}). The quasinorm $\|\cdot\|_{L^{p, q}}$ and the norm $|||\cdot|||_{L^{p, q}}$ are in fact equivalent and satisfy
$$
\|f\|_{L^{p, q}}\leq |||f|||_{L^{p, q}}\leq \frac{p-1}{p}\|f\|_{L^{p, q}}.
$$
Indeed, the first inequality is a direct consequence of $f^{\ast}(t)\leq f^{\ast\ast}(t)$ and the second follows from a Hardy inequality (see Hunt \cite[Theorem (Hardy), page 256]{Hu66}). 

Some important fact about the Lorentz spaces are 
\begin{itemize}
\item $ L^{p} = L^{p, p}$ for $1 < p < \infty$ and $L^{p, q}\subset L^{p, r}$, for $1 \leq q<r \leq \infty$ (see Grafakos \cite[Propositions 1.4.5 and 1.4.10]{Gr14});
\item  $\left\||f|^{a}\right\|_{L^{p,q}}=\|f\|_{L^{a p, a q}}$ (see Grafakos \cite[Remark 1.4.7]{Gr14});
\item If $1/p=1/p_1+1/p_2, 1/q\leq 1/q_1+1/q_2, 1< p, p_1, p_2 < \infty, 1\leq q_1, q_2 \leq \infty$, then
\begin{equation}\label{ONIn}
\left\|fg\right\|_{L^{p,q}}\lesssim \left\|f\right\|_{L^{p_1,q_1}}\left\|g\right\|_{L^{p_2,q_2}}.
\end{equation}
(see O'Neil \cite[Theorem 3.4]{ON63})
\end{itemize}

In Braz e Silva-Ferreira-Villamizar-Roa \cite[Lemma 2.1]{SFR09} the authors proved the following dispersive estimate in Lorentz spaces: if $1\leq q\leq +\infty$, $2<p<\infty$ and $\frac{1}{p}+\frac{1}{p'}=1$, then 
\begin{equation}\label{p5}
	\|S(t)f\|_{L^{p,q}}\lesssim |t|^{-\frac{N}{2}(\frac{1}{p'}-\frac{1}{p})}\|f\|_{L^{p',q}},\quad \textrm{for}\quad  t\neq 0.
\end{equation}

Simple computations imply that $|x|^{-b}\in L^{N/b, \infty}$, with $0<b<N$, which implies, from the inequality \eqref{ONIn}, that the multiplication operator $|\cdot|^{-b}:L^{p, q}\to L^{p_1, q_1}$ is bounded for $1/p_1=1/p+b/N$, $q_1\geq q$. Now, since $\sigma'_{b}<2<\sigma_{b}$ and setting $1/q=1/\sigma_b-b/N\sigma_b<1/2$, we have
\begin{align}\label{Stri}
\displaystyle
\left\||\cdot|^{-b / \sigma_{b}} S(t)\left[|\cdot|^{-b}|f|^{\sigma_{b}-1}\right]\right\|_{L^{\sigma_{b}, \sigma_{b}}}
&\lesssim \|S(t)[|\cdot|^{-b}|f|^{\sigma_{b}-1}]\|_{L^{q, \sigma_{b}^{\prime}}}\nonumber \\
&\lesssim |t|^{-\frac{N}{2}(\frac{1}{q'}-\frac{1}{q})}\||\cdot|^{-b/\sigma_b}[|\cdot|^{-b+b/\sigma_b}|f|^{\sigma_b-1}]\|_{L^{q',\sigma_b'}}\nonumber\\
&\lesssim |t|^{-\frac{N}{2}(\frac{1}{q'}-\frac{1}{q})}\left\||\cdot|^{-b+b/\sigma_b}|f|^{\sigma_b-1}\right\|_{L^{\sigma_b',\sigma_b'}}\nonumber\\
&\lesssim |t|^{-\frac{N}{2}(\frac{1}{q'}-\frac{1}{q})}\left\||\cdot|^{-b/\sigma_b}f\right\|_{L^{\sigma_b'(\sigma_b-1),\sigma_b'(\sigma_b-1)}}^{\sigma_b-1}\nonumber\\
&\lesssim |t|^{-\frac{N}{2}(\frac{1}{q'}-\frac{1}{q})}\left\||\cdot|^{-b/\sigma_b}f\right\|_{L^{\sigma_b}}^{\sigma_b-1}.
\end{align}
Finally
\begin{align}
\frac{N}{2}\left(\frac{1}{q'}-\frac{1}{q}\right)&=\frac{N}{2}\left(1-\frac{2}{q}\right)=\frac{N}{2}\left(1-\frac{2(N-b)}{N\sigma_b}\right)=\frac{2}{\sigma_b},
\end{align}
which concludes the proof.
\end{proof}

An immediate consequence of the previous result is the following

\begin{coro}\label{DispEst2}
Let $0<b<N$ and $\sigma_{b}=\frac{4-2b}{N}+2$. Then for all $t \neq 0$, we have
$$
\left\| S(t)\left[|\cdot|^{-b}|u|^{\sigma_{b}-2}v\right]\right\|_{L_{b}^{\sigma_{b}}} \lesssim |t|^{-2/\sigma_{b}}\left\| u \right\|_{L_{b}^{\sigma_{b}}}^{\sigma_{b}-2}\left\| v \right\|_{L_{b}^{\sigma_{b}}}.
$$
\end{coro}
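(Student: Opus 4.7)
The plan is to mirror the proof of Lemma \ref{DispEst}, but at the step where the nonlinearity is split off from the weight I will use a bilinear Hölder-type inequality in Lorentz spaces (O'Neil's inequality \eqref{ONIn}) in place of the single-function power identity $\||g|^a\|_{L^{p,q}}=\|g\|_{L^{ap,aq}}^a$. Concretely, starting from the dispersive estimate in Lorentz spaces \eqref{p5}, the same chain of inequalities as in the proof of Lemma \ref{DispEst} reduces the matter to controlling
\begin{equation}
\bigl\||\cdot|^{-b+b/\sigma_b}|u|^{\sigma_b-2}v\bigr\|_{L^{\sigma_b',\sigma_b'}}.
\end{equation}
The exponents of $|\cdot|^{-b/\sigma_b}$ balance as before, namely $|\cdot|^{-b+b/\sigma_b}=(|\cdot|^{-b/\sigma_b})^{\sigma_b-1}=(|\cdot|^{-b/\sigma_b})^{\sigma_b-2}\cdot|\cdot|^{-b/\sigma_b}$, so the quantity above equals
\begin{equation}
\bigl\|(|\cdot|^{-b/\sigma_b}|u|)^{\sigma_b-2}\,(|\cdot|^{-b/\sigma_b}v)\bigr\|_{L^{\sigma_b',\sigma_b'}}.
\end{equation}

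Next, I apply O'Neil's inequality \eqref{ONIn} with the Lorentz exponents $(p_1,q_1)=\bigl(\tfrac{\sigma_b}{\sigma_b-2},\tfrac{\sigma_b}{\sigma_b-2}\bigr)$ for the first factor and $(p_2,q_2)=(\sigma_b,\sigma_b)$ for the second. The checks are routine: since $\sigma_b>2$, both pairs are admissible, and
\begin{equation}
\frac{1}{p_1}+\frac{1}{p_2}=\frac{\sigma_b-2}{\sigma_b}+\frac{1}{\sigma_b}=\frac{\sigma_b-1}{\sigma_b}=\frac{1}{\sigma_b'},\qquad \frac{1}{q_1}+\frac{1}{q_2}=\frac{1}{\sigma_b'},
\end{equation}
so the required scaling and summability conditions are satisfied. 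Using the identity $\||g|^a\|_{L^{p,q}}=\|g\|_{L^{ap,aq}}^a$ on the first factor with $a=\sigma_b-2$ yields
\begin{equation}
\bigl\|(|\cdot|^{-b/\sigma_b}|u|)^{\sigma_b-2}\bigr\|_{L^{p_1,q_1}}=\bigl\||\cdot|^{-b/\sigma_b}u\bigr\|_{L^{\sigma_b,\sigma_b}}^{\sigma_b-2}=\|u\|_{L^{\sigma_b}_b}^{\sigma_b-2},
\end{equation}
while the second factor is directly $\||\cdot|^{-b/\sigma_b}v\|_{L^{\sigma_b}}=\|v\|_{L^{\sigma_b}_b}$.

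Combining these with the Lorentz dispersive estimate \eqref{p5} and the same exponent computation $\tfrac{N}{2}(1-2/q)=2/\sigma_b$ that closed the proof of Lemma \ref{DispEst}, one obtains the claimed bound. No new dispersive input is required: the only substantive change from the single-function case is replacing the power identity by the bilinear O'Neil inequality, and the main (very mild) obstacle is verifying that the Lorentz indices $p_1,p_2,q_1,q_2$ satisfy the hypotheses of \eqref{ONIn}, which is exactly the computation displayed above.
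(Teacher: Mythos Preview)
Your proof is correct and follows essentially the same route as the paper: both reduce to bounding $\bigl\||\cdot|^{-b+b/\sigma_b}|u|^{\sigma_b-2}v\bigr\|_{L^{\sigma_b'}}$ and then split this product with the exponents $\sigma_b/(\sigma_b-2)$ and $\sigma_b$. The paper simply invokes classical H\"older in $L^{\sigma_b'}$ for this step, whereas you phrase it as O'Neil's inequality with diagonal Lorentz indices $L^{p,p}$; since $L^{p,p}=L^p$, the two arguments are identical.
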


\begin{proof} Holder's inequality yields
\begin{align}
\left\||\cdot|^{-b+b/\sigma_b}|u|^{\sigma_b-2}v\right\|_{L^{\sigma_b'}}&=\left\||\cdot|^{-\frac{b}{\sigma_b}(\sigma_b-2)}|u|^{\sigma_b-2}|\cdot|^{-{b}/{\sigma_b}}v\right\|_{L^{\sigma_b'}}\\
&\leq \left\||\cdot|^{-\frac{b}{\sigma_b}(\sigma_b-2)}|u|^{\sigma_b-2}\right\|_{L^{\sigma_b/(\sigma_b-2)}} \left\||\cdot|^{-{b}/{\sigma_b}}v\right\|_{L^{\sigma_b}}
\end{align}
and arguing as in the proof of estimate \eqref{Stri} we deduce the desired inequality.
\end{proof} 

Before we proceed to the construction of the minimal mass blow-up solution under a flatness assumption on $k(x)$ close to the origin we present some basic properties of $Q_T$ defined in \eqref{QT2}.

\begin{lemma}\label{L1}
For all $t \in \mathbb{R}$, $R>0$ and $p \geq 1$ we have
\begin{itemize}
\item[(i)] $\left\||\cdot|^{-b /p} Q_{T}(t)\right\|_{L^{p}}=c\left|\frac{1}{T-t}\right|^{\frac{{N\left(p-2\right)+2b}}{2p}},\,$ for some $c>0$;
\item[(ii)] $\displaystyle \int_{|x| \geq R}|x|^{-b}\left|Q_{T}(x, t)\right|^{\sigma_b}\, dx \lesssim R^{-2} e^{-\frac{\theta R}{|T-t|}}$, for some $\theta>0$;
\item[(iii)] $\displaystyle \lim _{t \rightarrow T}\left\|Q_{T}(t)\right\|_{L^{2}(|x|\leq R)}=\|Q_{k(0)}\|_{L^2}$.
\end{itemize}
\end{lemma}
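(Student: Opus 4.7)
The plan is to handle all three parts via the same change of variables $y = x/(T-t)$, which converts integrals of $Q_T(t)$ into integrals of $Q_{k(0)}$ on rescaled domains. Observe first that $|Q_T(x,t)| = |T-t|^{-N/2}\,|Q_{k(0)}(x/(T-t))|$, so the modulation/phase plays no role in any of these computations.

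\textbf{Part (i).} I would write
\[
\int |x|^{-b}|Q_T(x,t)|^p\,dx = |T-t|^{-Np/2}\int |x|^{-b}\bigl|Q_{k(0)}(x/(T-t))\bigr|^p\,dx,
\]
then substitute $y = x/(T-t)$ so $dx = |T-t|^N\,dy$ and $|x|^{-b} = |T-t|^{-b}|y|^{-b}$. Collecting exponents gives $|T-t|^{N-Np/2-b}\,\||\cdot|^{-b/p}Q_{k(0)}\|_{L^p}^{p}$, and an elementary computation identifies $(N-Np/2-b)/p = -(N(p-2)+2b)/(2p)$, which matches the stated exponent after taking the $p$-th root.

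\textbf{Part (ii).} The same substitution in the integral over $|x|\geq R$ produces $|T-t|^{-2}\int_{|y|\geq R/|T-t|}|y|^{-b}|Q_{k(0)}(y)|^{\sigma_b}\,dy$ (using $N-N\sigma_b/2-b = -2$). Writing $\rho = R/|T-t|$ and invoking the exponential decay \eqref{DecayQ0}, the inner integral is bounded by $c\int_{|y|\geq \rho}|y|^{-b}e^{-\sigma_b\theta|y|}\,dy$. The main technical point is to absorb the polynomial weight $|y|^{-b}$ and the spherical factor $|y|^{N-1}$ into the exponential: for $\rho$ large one has the elementary estimate $\int_{\rho}^{\infty} r^{N-1-b} e^{-\sigma_b\theta r}\,dr \lesssim e^{-\theta'\rho}$ for any $\theta' < \sigma_b\theta$. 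I then use the slack in the exponent to recover the prefactor $R^{-2}$ via $e^{-\theta'\rho} \lesssim \rho^{-2}e^{-\theta'\rho/2}$; substituting $\rho = R/|T-t|$ gives $|T-t|^{-2}\cdot(R/|T-t|)^{-2}e^{-\theta'R/(2|T-t|)} = R^{-2}e^{-\theta R/|T-t|}$ with a possibly reduced $\theta>0$, as required. The case $\rho \lesssim 1$ is trivial from the same trick (the exponential is then of order $1$ and the bound reduces to a harmless constant times $R^{-2}$ coming from $|T-t|^{-2} \lesssim R^{-2}$ when $R \lesssim |T-t|$).

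\textbf{Part (iii).} Applying the substitution to $\|Q_T(t)\|_{L^2(|x|\leq R)}^2$ yields
\[
\int_{|x|\leq R}|Q_T(x,t)|^2\,dx = \int_{|y|\leq R/|T-t|}|Q_{k(0)}(y)|^2\,dy,
\]
and since $R/|T-t| \to \infty$ as $t \to T$, the monotone/dominated convergence theorem gives the limit $\|Q_{k(0)}\|_{L^2}^2$, whence the conclusion after taking the square root. The only delicate point in the whole lemma is the exponent juggling in (ii); the other parts are bookkeeping.
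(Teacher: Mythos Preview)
Your proposal is correct and follows essentially the same route as the paper: all three parts are handled by the scaling substitution $y=x/(T-t)$, with the exponential decay \eqref{DecayQ0} supplying the tail bound in (ii). The only cosmetic difference is in (iii), where the paper computes the exterior mass $\|Q_T(t)\|_{L^2(|x|\geq R)}^2=\int_{|y|\geq R/|T-t|}|Q_{k(0)}|^2\,dy\to 0$ and combines it with $\|Q_T(t)\|_{L^2}=\|Q_{k(0)}\|_{L^2}$, whereas you compute the interior mass directly; these are trivially equivalent.
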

\begin{proof}
From definition \eqref{QT}, direct computations imply
\begin{align}
\displaystyle
\left\| |\cdot|^{-b/p} Q_{T}(t)\right\|_{L^{p}}^{p}&=\int |x|^{-b}\left|\frac{1}{T-t}\right|^{\frac{N_{p}}{2}}\left|Q_{k(0)}\left(\frac{ x}{T-t}\right)\right|^{p}\, dx\\
&=\int\left|\frac{1}{T-t}\right|^{\frac{N_{p}}{2}-N+b}|x|^{-b}\left|Q_{k(0)}(x)\right|^{p} dx\\
&=\left|\frac{1}{T-t}\right|^{N\left(p/2-1\right)+b}\||\cdot|^{-b/p} Q_{k(0)}\|_{L^{p}}^{p},
\end{align}
which implies $(i)$ by the property \eqref{DecayQ0}. To prove $(ii)$, we employ similar calculations to deduce
\begin{align}
\displaystyle
\int_{|x| \geq R}|x|^{-b}\left|Q_{T}\left(x,t\right)\right|^{\sigma_b} \, d x&=\left|\frac{1}{T-t}\right|^2 \int_{|x| \geq R/|T-t|} |x|^{-b}\left|Q_{k(0)}(x)\right|^{\sigma_b}\, dx\\
&\leq cR^{-2}\left|\frac{1}{T-t}\right|^{2-b}\int_{R/|T-t|}^{\infty}e^{-\sigma_b{\theta} r}r^{N-1}dr\\
&\leq cR^{-2}e^{-\sigma_b{\theta} R/4|T-t|}.
\end{align}
Finally, since $\left\|Q_{T}(t)\right\|_{L^2}=\|Q_{k(0)}\|_{L^2}$ and
\begin{align}
\displaystyle
\left\|Q_{T}(t)\right\|^2_{L^2(|x|\geq R)}&=\int_{|x| \geq R} \left|\frac{1}{T-t}\right|^N\left|Q_{k(0)}\left(\frac{ x}{T-t}\right)\right|^{2} \,dx\\
&= \int_{|x|\geq R/|T-t|}|Q_{k(0)}(x)|^2\,dx \to 0, \quad \mbox{as} \quad t\to T,
\end{align}
item $(iii)$ follows.
\end{proof}

Now, given $T>0$ and $\varepsilon>0$ small, define $v_{\varepsilon}(t)$ to be the solution of \eqref{PVI} such that 
$$
v_{\varepsilon}(T-\varepsilon)=Q_{T}(T-\varepsilon).
$$ 
By the integral formulation with initial time $T-\varepsilon$ we have for any existence time $t \leq T-\varepsilon$ (we can obviously assume take $t\geq T/2$)
$$
v_{\varepsilon}(t)=S(t-(T-\varepsilon)) Q_{T}(T-\varepsilon)+i \int_{T-\varepsilon}^{t}S(t-s)\left[k(x)|x|^{-b}| v_{\varepsilon}(s)|^{\sigma_{b}-2} v_{\varepsilon}(s)\right] ds
$$
and also since $Q_{T}(t)$ in a solution of \eqref{INLSc}
$$
Q_{T}(t)=S(t-(T-\varepsilon)) Q_{T}(t-\varepsilon)+i \int_{T-\varepsilon}^{t} S(t-s)\left[k(0)|x|^{-b}\left|Q_{T}(s)\right|^{\sigma_{b}-2} Q_{T}(s)\right]\, d s.
$$

Therefore
\begin{align}
v_{\varepsilon}(t)-Q_{T}(t)&=i \int_{T-\varepsilon}^{t}S(t-s)\left[k(x)|x|^{-b}\left(| v_{\varepsilon}(s)|^{\sigma_{b}-2} v_{\varepsilon}(s)-\left|Q_{T}(s)\right|^{\sigma_{b}-2} Q_{T}(s)\right)\right]\, ds\\
& \quad +i \int_{T-\varepsilon}^{t}S(t-s)\left[(k(x)-k(0))|x|^{-b}\left|Q_{T}(s)\right|^{\sigma_{b}-2} Q_{T}(s)\right]\, ds\\
&\equiv I_1+I_2.
\end{align}

From Lemma \ref{DispEst} and Corollary \ref{DispEst2} we obtain
\begin{align}
\|I_1\|_{L_b^{\sigma_b}}&\leq c\|k\|_{L^{\infty}}\int^{T-\varepsilon}_{t}|t-s|^{-2/\sigma_b}\left(\left\| v_{\varepsilon}(s) \right\|_{L_b^{\sigma_{b}}}^{\sigma_{b}-2}+\left\| Q_{T}(s) \right\|_{L_b^{\sigma_{b}}}^{\sigma_{b}-2}\right)\\
& \quad \quad \times\left\| v_{\varepsilon}-Q_{T}(s) \right\|_{L_b^{\sigma_{b}}}\, ds
\end{align}
and
\begin{align}
\|I_2\|_{L_b^{\sigma_b}}&\leq c\int^{T-\varepsilon}_{t}|t-s|^{-2/\sigma_b}\left\| Q_{T}(s) \right\|_{L_b^{\sigma_{b}}}^{\sigma_{b}-2}\left\|(k(0)-k(x)) Q_{T}(s) \right\|_{L_b^{\sigma_{b}}}\,ds.
\end{align}

Now, from Lemma \ref{L1}-$(i)-(ii)$, we have
\begin{equation}\label{QT-est1}
\left\| Q_{T}(t)\right\|_{L_b^{\sigma_b}}=\frac{c}{|T-t|^{{2}/{\sigma_b}}}
\end{equation}
and for some $\theta >0$ (depending only on $\sigma_b$)
$$
\left\|(k(0)-k(x))Q_{T}(t)\right\|_{L_b^{\sigma_b}}\leq 2\|k\|_{L^{\infty}}\left\|Q_{T}(t)\right\|_{L_b^{\sigma_b}(|x|\geq \ell_0)} \leq c\|k\|_{L^{\infty}} \ell_0^{-2} e^{-{2\theta \ell_0 }/{|T-t|}},
$$
which implies for all existence time $t\in [T/2, T-\varepsilon)$
\begin{align}
\|I_2\|_{L_b^{\sigma_b}}&\leq c\|k\|_{L^{\infty}} \ell_0^{-2} \int^{T-\varepsilon}_{t}|t-s|^{-2/\sigma_b} \left|\frac{1}{T-s}\right|^{\frac{2(\sigma_b-2)}{\sigma_b}} e^{-{2\theta \ell_0 }/{|T-s|}}\,ds\\
&\leq c\|k\|_{L^{\infty}} \ell_0^{-2} e^{-{\theta \ell_0 }/{|T-t|}}\int^{T-\varepsilon}_{t}|t-s|^{-2/\sigma_b}\,ds\\
&\leq c\|k\|_{L^{\infty}} \ell_0^{-2} e^{-{\theta \ell_0 |}/{|T-t|}} \int^{T/2}_{0}|\tau|^{-2/\sigma_b}\,d\tau\\
&\leq c \|k\|_{L^{\infty}}\ell_0^{-2} e^{-{\theta \ell_0 }/{|T-t|}}.
\end{align}

On the other hand, assuming that $\left\| v_{\varepsilon}(t)\right\|_{L_b^{\sigma_{b}}} \leq (c+1)\left|\frac{1}{T-t}\right|^{2 / \sigma_{b}}$ and using \eqref{QT-est1}, we deduce
$$
\|I_1\|_{L_b^{\sigma_b}}\leq (c+1)^{\sigma_b-1} \|k\|_{L^{\infty}} \int_{t}^{T-\varepsilon}|t-s|^{-2/\sigma_b}\left|\frac{1}{T-s}\right|^{\frac{2\left(\sigma_{b}-2\right)}{\sigma_{b}}}\left\|v_{\varepsilon}(s)-Q_{T}(s)\right\|_{L_b^{\sigma_b}} \,ds.
$$
Collecting the previous estimates, we get 
\begin{align}
\|v_{\varepsilon}(t)-Q_{T}(t)\|_{L_b^{\sigma_b}}&\leq (c+1)^{\sigma_b-1} \|k\|_{L^{\infty}} \int_{t}^{T-\varepsilon}|t-s|^{-2/\sigma_b}\left|\frac{1}{T-s}\right|^{\frac{2\left(\sigma_{b}-2\right)}{\sigma_{b}}} \\
&\quad \times \left\|v_{\varepsilon}(s)-Q_{T}(s)\right\|_{L_b^{\sigma_b}} \, ds + c \|k\|_{L^{\infty}} \ell_0^{-2} e^{-{\theta \ell_0 }/{|T-t|}}.
\label{uep-Q}
\end{align}

In order to use a method of a priori estimates we need the following numerical inequality.

\begin{lemma}\label{MerNum}
Let $T>0$, $c_{1}>0$ and $\alpha \in(0,1)$. For $\ell>0$ large enough there exists $\tau=\tau(c_1, \ell, \alpha)>0$ such that for all $t \in[\tau, T)$ 
$$
\int_{t}^{T}|t-s|^{-\alpha}\left|\frac{1}{T-s}\right|^{2(1-\alpha)} e^{-\ell/|T-s|} \, ds \leq c_{1} e^{-\ell/|T-t|}.
$$
\end{lemma}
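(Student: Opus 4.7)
The plan is to factor out the target exponential $e^{-\ell/(T-t)}$ from the integrand using the algebraic identity
\[
\frac{1}{T-s} \;=\; \frac{1}{T-t} + \frac{s-t}{(T-t)(T-s)},\qquad s\in(t,T).
\]
Setting $a:=T-t$ and substituting $r=s-t$, the integral to be estimated becomes $e^{-\ell/a}$ times
\[
\int_0^a r^{-\alpha}(a-r)^{-2(1-\alpha)} e^{-\ell r/(a(a-r))}\,dr,
\]
and a further rescaling $r=av$ brings this to $e^{-\ell/a}\cdot a^{\alpha-1} J(a)$, where
\[
J(a):=\int_0^1 v^{-\alpha}(1-v)^{-2(1-\alpha)} e^{-\ell v/(a(1-v))}\,dv.
\]
It thus suffices to show that $a^{\alpha-1}J(a)\leq c_1$ uniformly in $a\in(0,T)$, provided $\ell$ is sufficiently large.

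To evaluate $J(a)$, I would apply the change of variable $y=\ell v/(a(1-v))$, equivalently $v=ay/(\ell+ay)$, $1-v=\ell/(\ell+ay)$, $dv=a\ell(\ell+ay)^{-2}\,dy$. A direct computation (noting $\alpha+2(1-\alpha)-2=-\alpha$ so that powers of $\ell+ay$ collapse) yields
\[
J(a) \;=\; a^{1-\alpha}\ell^{2\alpha-1}\int_0^\infty y^{-\alpha}(\ell+ay)^{-\alpha} e^{-y}\,dy.
\]
The crude but crucial inequality $(\ell+ay)^{-\alpha}\leq \ell^{-\alpha}$, valid for all $a,y\geq 0$, then gives $J(a)\leq a^{1-\alpha}\ell^{\alpha-1}\Gamma(1-\alpha)$. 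Multiplying by $a^{\alpha-1}$ makes the powers of $a$ cancel exactly, yielding the uniform bound $a^{\alpha-1}J(a)\leq \ell^{\alpha-1}\Gamma(1-\alpha)$. Since $\alpha-1<0$, taking $\ell\geq(\Gamma(1-\alpha)/c_1)^{1/(1-\alpha)}$ closes the estimate, and the conclusion of the lemma in fact holds for every $t\in(0,T)$, so $\tau$ may be chosen as any positive number less than $T$.

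The main obstacle is locating the right substitution. A naive split of $[t,T]$ at the midpoint $(T+t)/2$ fails: on the subinterval adjacent to $s=t$ the exponential gain is lost, leaving an uncontrolled factor $a^{\alpha-1}$; and bounding $(T-s)^{-2(1-\alpha)} e^{-\ell/(T-s)}$ by its maximum on $[0,a]$ produces the same obstacle. The virtue of the substitution $y=\ell v/(a(1-v))$ is that it treats the two singularities, at $s=t$ and at $s=T$, together with the exponential $e^{-\ell/(T-s)}$ in a single stroke, converting the integral into a standard Gamma-type integral in which both the dependence on $a$ disappears and the dependence on $\ell$ becomes explicit through the harmless prefactor $\ell^{\alpha-1}$.
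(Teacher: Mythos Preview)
Your proof is correct and in fact gives a slightly stronger conclusion than the lemma claims: the estimate holds for every $t\in(0,T)$, so $\tau$ is superfluous.

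Both your argument and the paper's begin the same way. After the substitutions $a=T-t$, $r=s-t$, $r=av$, the paper arrives (with $w=1-v$) at exactly your inequality $a^{\alpha-1}J(a)\le c_1$. From there the approaches diverge. The paper splits the $w$-integral at $w=1/2$: on $(1/2,1)$ it bounds $w^{-2(1-\alpha)}$ by a constant and, via the change of variable $r=(1-w)/t$, obtains $c\,t^{1-\alpha}\int_0^\infty r^{-\alpha}e^{-\ell r}\,dr$, which is small for $\ell$ large; on $(0,1/2)$ it uses the crude lower bound $\ell(1-w)/(tw)\ge \ell/(2t)+1/(4w)$, giving a term of size $ce^{-\ell/(2t)}$, and it is this piece that forces the restriction to $t$ close to $T$ (equivalently $a$ small). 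Your single substitution $y=\ell v/(a(1-v))$ collapses both singular endpoints and the exponential into one Gamma-type integral, and the factor $(\ell+ay)^{-\alpha}\le \ell^{-\alpha}$ then makes the $a$-dependence cancel exactly. The payoff is a cleaner argument, an explicit threshold $\ell\ge(\Gamma(1-\alpha)/c_1)^{1/(1-\alpha)}$, and a bound uniform in $t\in(0,T)$; the paper's splitting is more elementary but introduces an unnecessary smallness constraint on $T-t$.
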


\begin{proof}
First, setting $z=T-s$ the previous inequality is equivalent to
$$
\int_{0}^{T-t}|z-(T-t)|^{-\alpha}\left|\frac{1}{z}\right|^{2(1-\alpha)} e^{-{\ell}/{z}} \, d z \leq c_{1} e^{-{\ell}/{|T-t|}}. 
$$
Since $\tau \leq t \leq T$ implies $ 0 \leq T-t \leq T-\tau$, the desired estimate is also equivalent to existence of $\tau>0$ such that
$$
\int_{0}^{t}|z-t|^{-\alpha}\left|\frac{1}{z}\right|^{2(1-\alpha)} e^{-\ell / z}\, d z \leq c_{1} e^{-\lambda/t}, \,\,\mbox{ for all} \,\, t \in[0, {\tau}].
$$

The rest of the proof follows the steps as in the proof of Lemma 6 in Merle \cite{Me90}. Indeed, taking $z=tw$ the previous inequality is equivalent to
$$
\int_{0}^{1}(1-w)^{-\alpha}w^{-2(1-\alpha)} e^{-\frac{\lambda(1-w)}{tw}}\, d w \leq c_{1} t^{1-\alpha}.
$$
For $w\in (0,1/2)$, $0<t\leq 1$ and $\ell>1$ we have
$$
\frac{\ell(1-w)}{t w} \geq \frac{\ell}{2 t}+\frac{1}{4 w} 
$$
and therefore
$$
\int_{0}^{1 / 2}(1-w)^{-\alpha} w^{-2(1-\alpha)} e^{-\frac{\ell(1-w)}{t w}} \, d w \leq \left(\int_{0}^{1 / 2} w^{-2(1-\alpha)} e^{-1 / 4 w} d w\right) e^{-\ell / 2 t}\leq ce^{-\ell / 2 t}.
$$
Now $c e^{-\ell / 2 t} \leq c_{1} t^{1-\alpha}$ is equivalent to
\begin{equation}\label{talpha}
t^{\alpha-1} e^{-\ell / 2 t} \leq c_{1} / c.
\end{equation}
For fixed $\alpha \in (0,1)$ and $\ell>0$ the left hand side of the previous inequality goes to zero as $t\to 0$. Therefore, for any given positive number $c_1>0$ there exists $\tau(c_1, \ell, \alpha)>0$ such that the inequality \eqref{talpha} holds for any $t\in [0,\tau]$.

On the other hand, for $w\in (1/2,1)$ we have $1-w\leq (1-w)/w$ and therefore
\begin{align}
\int_{1/2}^{1}(1-w)^{-\alpha}w^{-2(1-\alpha)} e^{-\frac{\ell(1-w)}{tw}}\, d w&\leq c\int_{1/2}^{1}(1-w)^{-\alpha} e^{-{\ell(1-w)}/{t}}\, d w\\
&=c\int_{0}^{1/2}s^{-\alpha} e^{-{\ell s}/{t}}\,d s\\
&=c\int_{0}^{1/2t}r^{-\alpha} e^{-{\ell r}}t^{1-\alpha}\,d r\\
&\leq c t^{1-\alpha}\int_{0}^{\infty}r^{-\alpha} e^{-{\ell r}}\,d r.
\end{align}
Since $\alpha \in (0,1)$ the integral in the last line goes to zero as $\ell \to \infty$ and the proof is completed.
\end{proof}

Now, we are able to prove the following result

\begin{prop}\label{Unifuep}
For $\ell_{0}>0$ large enough, there exist $\varepsilon_{0}, K, c, \theta>0$ such that for $\varepsilon \in \left(0, \varepsilon_{0}\right]$ we have
\begin{itemize}
\item[(i)] $\| v_{\varepsilon}(T-\varepsilon_{0}) \|_{L_b^{\sigma_b}} \leq K$;
\item[(ii)] $\|v_{\varepsilon}(t)-Q_{T}(t)\|_{L_b^{\sigma_b}} \leq c e^{-\theta \ell_{0} / (T-t)}, \quad \mbox{for all} \quad t \in\left[T-\varepsilon_{0}, T-\varepsilon\right]$.
\end{itemize}
\end{prop}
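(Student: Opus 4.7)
The plan is a backward-in-time bootstrap argument on the interval $[T-\varepsilon_0,T-\varepsilon]$ for the two quantities
$$g(t):=\|v_\varepsilon(t)-Q_T(t)\|_{L_b^{\sigma_b}},\qquad h(t):=\|v_\varepsilon(t)\|_{L_b^{\sigma_b}}.$$
The bootstrap hypotheses will be (BH1) $h(t)\leq(c+1)|T-t|^{-2/\sigma_b}$ and (BH2) $g(t)\leq M\ell_0^{-2}e^{-\theta\ell_0/(T-t)}$, where $M>0$ is a constant to be fixed. At the initial time $t=T-\varepsilon$ one has $g(T-\varepsilon)=0$ and, by \eqref{QT-est1}, $h(T-\varepsilon)=c\varepsilon^{-2/\sigma_b}$, so both inequalities hold strictly. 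Local well-posedness of \eqref{PVI} together with continuity of $g$ and $h$ in $t$ then guarantees that (BH) persists on a small left-neighborhood of $T-\varepsilon$.

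To close the bootstrap I would substitute (BH2) into the right-hand side of \eqref{uep-Q}, whose derivation already invoked (BH1). The integral that appears has exactly the form covered by Lemma \ref{MerNum} with $\alpha=2/\sigma_b\in(0,1)$, since $2(1-\alpha)=2(\sigma_b-2)/\sigma_b$. Hence for any prescribed $c_1>0$, taking $\ell_0$ large and $\varepsilon_0\leq T-\tau$ with $\tau$ as in Lemma \ref{MerNum}, the integral is bounded by $c_1\,e^{-\theta\ell_0/(T-t)}$. Choosing $c_1$ so small that $(c+1)^{\sigma_b-1}\|k\|_{L^\infty}c_1\leq 1/4$ and then $M\geq 4c\|k\|_{L^\infty}$ yields
$$g(t)\leq \tfrac{M}{2}\ell_0^{-2}e^{-\theta\ell_0/(T-t)},$$
strictly improving (BH2). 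For (BH1), the triangle inequality with the improved (BH2) and \eqref{QT-est1} gives $h(t)\leq M\ell_0^{-2}e^{-\theta\ell_0/(T-t)}+c|T-t|^{-2/\sigma_b}$; since the exponential decays to zero as $t\to T$ much faster than the polynomial blows up, this is strictly less than $(c+1)|T-t|^{-2/\sigma_b}$ on the whole interval after possibly shrinking $\varepsilon_0$ further.

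A standard open/closed continuity argument then propagates (BH) over all of $[T-\varepsilon_0,T-\varepsilon]$, which is exactly conclusion (ii) with $c:=M\ell_0^{-2}$. Conclusion (i) follows from the triangle inequality at $t=T-\varepsilon_0$ together with (ii) and \eqref{QT-est1}:
$$\|v_\varepsilon(T-\varepsilon_0)\|_{L_b^{\sigma_b}}\leq M\ell_0^{-2}e^{-\theta\ell_0/\varepsilon_0}+c\,\varepsilon_0^{-2/\sigma_b}=:K,$$
which is independent of $\varepsilon\in(0,\varepsilon_0]$.

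The main difficulty lies in the careful ordering of the constants: $c_1$ must be chosen first depending on $c$ and $\|k\|_{L^\infty}$, then $\ell_0$ taken large enough for Lemma \ref{MerNum} to deliver this $c_1$, then $M$ fixed large enough to absorb the forcing term $c\|k\|_{L^\infty}$, and finally $\varepsilon_0$ chosen small enough both for the applicability condition $T-\varepsilon_0\geq\tau$ in Lemma \ref{MerNum} and for the polynomial-versus-exponential comparison used to improve (BH1). Uniformity of all constants in $\varepsilon$ is what makes the proposition useful, and it is enabled by the choice $v_\varepsilon(T-\varepsilon)=Q_T(T-\varepsilon)$, which forces $g(T-\varepsilon)=0$ regardless of $\varepsilon$ so that the bootstrap always starts from the same place.
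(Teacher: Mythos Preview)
Your proposal is correct and follows essentially the same bootstrap/continuity scheme as the paper: an a priori bound on $h(t)$ justifies the inequality \eqref{uep-Q}, into which the a priori bound on $g(t)$ is inserted, and Lemma \ref{MerNum} with $\alpha=2/\sigma_b$ closes the loop; conclusion (i) then follows from (ii) and \eqref{QT-est1} by the triangle inequality at $t=T-\varepsilon_0$. The only cosmetic difference is how the absorption room is created: the paper runs the bootstrap with the slightly weaker exponential $e^{-\theta\ell_0/2(T-t)}$ and uses the elementary inequality $e^{-\theta\ell_0/(T-t)}\le\tfrac12 e^{-\theta\ell_0/2(T-t)}$ (valid for $\varepsilon_0$ small) to swallow the forcing term, whereas you keep the same exponential rate throughout and instead introduce a large multiplicative prefactor $M$; both devices are equivalent and the ordering of constants you describe is exactly the one needed.
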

 
\begin{proof}
Let $c, \theta>0$ be as in estimate \eqref{uep-Q}. From Lemma \ref{MerNum} (with $\alpha=2 / \sigma_{b}$ and $c_{1}=1 / 4$), there exist $\ell_{0} \gg 1$ and $\varepsilon_{0} \ll T/{2}$ such that
\begin{equation}\label{eEst0}
(c+1)^{\sigma_b-1} \|k\|_{L^{\infty}} \int_{t}^{T}|t-s|^{-2 / \sigma_{b}}\left|\frac{1}{T-s}\right|^{\frac{2(\sigma_{b}-2)}{\sigma_{b}}} e^{-{\theta \ell_0 }/2|T-s|} \,ds \leq \frac{1}{4} e^{-{\theta \ell_0 }/2|T-t|},
\end{equation}
for all $t \in [T-\varepsilon_{0}, T)$.

On the other hand, reducing $\varepsilon_{0}>0$ if necessary, for any $\varepsilon<\varepsilon_{0}$ and for all $t \in [T-\varepsilon_{0}, T-\varepsilon]$, simple computations imply that
\begin{equation}\label{eEst11}
e^{-{\theta \ell_0 }/|T-t|}\leq \frac{1}{2} e^{-{\theta \ell_0 }/2|T-t|}
\end{equation}
and
\begin{equation}\label{eEst1}
e^{-{\theta \ell_0 }/2|T-t|}\leq 1/|T-t|^{2/\sigma_b}.
\end{equation}

Now, let $\varepsilon<\varepsilon_{0}$, recalling that $v_{\varepsilon}(T-\varepsilon)=Q_{T}(T-\varepsilon)$, by the continuity of the flow, there exists $\varepsilon_{1} \in\left(\varepsilon, \varepsilon_{0}\right)$ such that for all  $t \in\left[T-\varepsilon_{1}, T-\varepsilon\right]$ we have
\begin{equation}\label{eEst2}
\|v_{\varepsilon}(t)-Q_{T}(t)\|_{L_b^{\sigma_b}}<e^{-{\theta \ell_0 }/2(T-t)}.
\end{equation}
In view of relation \eqref{QT-est1} and inequalities \eqref{eEst1} and \eqref{eEst2}, we deduce for all  $t \in\left[T-\varepsilon_{1}, T-\varepsilon\right]$ that
\begin{align}
\| v_{\varepsilon}(t)\|_{L_b^{\sigma_b}} & \leq \|v_{\varepsilon}(t)-Q_{T}(t)\|_{L_b^{\sigma_b}}+\|Q_{T}(t)\|_{L_b^{\sigma_b}}\\
& <e^{-{\theta \ell_0 }/2(T-t)}+c /|T-t|^{2 / \sigma_{b}}<(c+1) /|T-t|^{2 / \sigma_{b}}. 
\label{eEst3}
\end{align}
Let $\bar{\varepsilon} \in (\varepsilon_{1}, T/2)$ such that $T-\bar{\varepsilon}$ is the first time where
\begin{equation}\label{eEst4}
\|v_{\varepsilon}(T-\bar{\varepsilon})-Q_{T}(T-\bar{\varepsilon})\|_{L_b^{\sigma_b}}=e^{- \theta \ell_{0}/2\bar{\varepsilon}}.
\end{equation}
If there is no such $\bar{\varepsilon}>0$, then \eqref{eEst2} holds for all $t \in [T-\varepsilon_{0}, T-\varepsilon]$ and estimate $(ii)$ holds. Let us prove the same conclusion in the other case. Indeed, we claim that $\bar{\varepsilon} \geq \varepsilon_{0}$. If $\bar{\varepsilon}<\varepsilon_{0}$, by definition of $\bar{\varepsilon}$ the inequality \eqref{eEst2} holds for all $t \in(T-\bar{\varepsilon}, T-\varepsilon]$ and therefore estimate \eqref{eEst3} holds in the same time interval. Thus, increasing $\ell_0>0$ if necessary such that $c \|k\|_{L^{\infty}} \ell_0^{-2}<1$, we can use \eqref{uep-Q} to deduce
\begin{align}
\|v_{\varepsilon}(T-\bar{\varepsilon})-Q_{T}(T-\bar{\varepsilon})\|_{L_b^{\sigma_b}}&\leq (c+1)^{\sigma_b-1} \|k\|_{L^{\infty}}   \int_{T-\bar{\varepsilon}}^{T-\varepsilon}|(T-\bar{\varepsilon})-s|^{-2/\sigma_b}\left|\frac{1}{T-s}\right|^{\frac{2\left(\sigma_{b}-2\right)}{\sigma_{b}}} \\
&\quad \times \left\|v_{\varepsilon}(s)-Q_{T}(s)\right\|_{L_b^{\sigma_b}}\, ds +  e^{-{\theta \ell_0 }/{\bar{\varepsilon}}}\\
&\leq (c+1)^{\sigma_b-1} \|k\|_{L^{\infty}}   \int_{T-\bar{\varepsilon}}^{T-\varepsilon}|(T-\bar{\varepsilon})-s|^{-2/\sigma_b}\left|\frac{1}{T-s}\right|^{\frac{2\left(\sigma_{b}-2\right)}{\sigma_{b}}}\\
&\quad \times  e^{-{\theta \ell_0 }/2(T-s)} ds +  e^{-{\theta \ell_0 }/{\bar{\varepsilon}}}\\
&\leq \frac{1}{4} e^{-{\theta \ell_0 }/2\bar{\varepsilon}}+ \frac{1}{2} e^{-{\theta \ell_0 }/2\bar{\varepsilon}}<  e^{-{\theta \ell_0 }/2\bar{\varepsilon}},
\end{align}
where in the last line we have used estimates \eqref{eEst0} and \eqref{eEst11}, obtaining a contradiction with \eqref{eEst4}. This completes the proof of $(ii)$, which also implies, in view of \eqref{eEst3}, that 
$$
\| v_{\varepsilon}(T-\varepsilon_{0}) \|_{L_b^{\sigma_b}} \leq (c+1)/\varepsilon_{0}^{2/\sigma_b}, \quad \mbox{for all} \quad \varepsilon \in \left(0, \varepsilon_{0}\right],
$$
finishing the proof.
\end{proof}

Next, we obtain the initial datum for the minimal mass blow-up solution we want to construct.
\begin{lemma}\label{f-phi}
Let $\left\{v_{1/n}\left(T-\varepsilon_{0}\right)\right\}$ for $n \in \mathbb{N}$ given by Proposition \ref{Unifuep}. Then there exists $u_0 \in H^{1}$ such that $v_{1/n}(T-\varepsilon_{0}) \rightarrow u_0$, as $n \rightarrow \infty$, strong in $L_{b}^{\sigma_{b}}$.
\end{lemma}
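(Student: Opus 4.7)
The proof proceeds in three stages: a uniform $H^1$ bound on the sequence, the Cauchy property in $L_b^{\sigma_b}$, and identification of the limit as an element of $H^1$.

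For the uniform $H^1$ bound, mass conservation gives $\|v_{1/n}(T-\varepsilon_0)\|_{L^2}=\|Q_{k(0)}\|_{L^2}$ and energy conservation gives $E[v_{1/n}(T-\varepsilon_0)] = E[Q_T(T-1/n)]$. Decomposing $E[Q_T(t)] = E_{k(0)}[Q_T(t)] + \frac{1}{\sigma_b}\int(k(0)-k(x))|x|^{-b}|Q_T(t)|^{\sigma_b}\,dx$, a direct computation from the explicit form \eqref{QT2} together with the Pohozaev identity $\frac{1}{2}\|\nabla Q_{k(0)}\|_{L^2}^2=\frac{k(0)}{\sigma_b}\|Q_{k(0)}\|_{L_b^{\sigma_b}}^{\sigma_b}$ shows that the singular terms of order $(T-t)^{-2}$ cancel and $E_{k(0)}[Q_T(t)] = \frac{1}{8}\||\cdot|Q_{k(0)}\|_{L^2}^2$ is independent of $t$. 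Under the flatness assumption \eqref{flat}, the correction integral is supported in $|x|\geq\ell_0$ and is bounded by $ce^{-\theta\ell_0 n}$ via Lemma \ref{L1}(ii). Hence $E[v_{1/n}(T-\varepsilon_0)]$ is uniformly bounded in $n$, and combined with Proposition \ref{Unifuep}(i) the identity $\frac{1}{2}\|\nabla v\|_{L^2}^2 = E[v] + \frac{1}{\sigma_b}\int k(x)|x|^{-b}|v|^{\sigma_b}\,dx \leq E[v] + \frac{\|k\|_{L^\infty}}{\sigma_b}K^{\sigma_b}$ yields the desired uniform $H^1$ bound.

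For the Cauchy property, fix $n>m$ and set $w_{n,m}(t)=v_{1/n}(t)-v_{1/m}(t)$ on $[T-\varepsilon_0, T-1/m]$. Since $v_{1/m}(T-1/m)=Q_T(T-1/m)$, Proposition \ref{Unifuep}(ii) yields $\|w_{n,m}(T-1/m)\|_{L_b^{\sigma_b}} \leq c e^{-\theta\ell_0 m}$. Applying the Duhamel formula for $w_{n,m}$ on $[t,T-1/m]$, using the pointwise bound $\bigl||u|^{\sigma_b-2}u-|v|^{\sigma_b-2}v\bigr|\lesssim(|u|^{\sigma_b-2}+|v|^{\sigma_b-2})|u-v|$, Corollary \ref{DispEst2}, and the uniform singular estimate $\|v_{1/j}(s)\|_{L_b^{\sigma_b}}\leq(c+1)|T-s|^{-2/\sigma_b}$ derived in the proof of Proposition \ref{Unifuep}, one obtains for $g(t)=\|w_{n,m}(t)\|_{L_b^{\sigma_b}}$ an integral inequality of the same shape as \eqref{uep-Q}. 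Running the same a priori/bootstrap argument together with Lemma \ref{MerNum} then gives $g(T-\varepsilon_0)\leq c' e^{-\theta'\ell_0 m}\to 0$ as $m\to\infty$, which is the desired Cauchy property.

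Completeness of $L_b^{\sigma_b}$ provides $u_0\in L_b^{\sigma_b}$ with $v_{1/n}(T-\varepsilon_0)\to u_0$ strongly. By the first stage, up to a subsequence $v_{1/n_k}(T-\varepsilon_0)\rightharpoonup \tilde u_0$ in $H^1$, and the compact embedding $H^1\hookrightarrow L_b^{\sigma_b}$ used in the proof of Proposition \ref{Prop2-VC} yields strong $L_b^{\sigma_b}$ convergence to $\tilde u_0$; uniqueness of the strong limit forces $u_0=\tilde u_0\in H^1$. The main obstacle is the Cauchy stage: the free-propagation term $S(t-(T-1/m))w_{n,m}(T-1/m)$ must be controlled in the weighted space $L_b^{\sigma_b}$ even though the naive bound $\|S(\tau)f\|_{L_b^{\sigma_b}}\lesssim\|f\|_{L_b^{\sigma_b}}$ fails, forcing recourse to the Lorentz-space Strichartz machinery underlying Lemma \ref{DispEst}; all remaining terms fit the framework already developed in Proposition \ref{Unifuep}.
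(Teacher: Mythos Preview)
Your Stage 1 and the compact-embedding portion of Stage 3 already constitute the paper's entire proof: once the sequence $\{v_{1/n}(T-\varepsilon_0)\}$ is bounded in $H^1$, weak compactness produces $u_0\in H^1$ with $v_{1/n_k}(T-\varepsilon_0)\rightharpoonup u_0$, and the compact embedding $H^1\hookrightarrow L_b^{\sigma_b}$ upgrades this to strong $L_b^{\sigma_b}$ convergence along that subsequence. The paper does nothing more; in particular it does not establish full-sequence convergence, so your Stage 2 is aiming for a strictly stronger conclusion than the paper actually proves (and than is needed in the proof of Theorem \ref{MinimalBlow}).

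That extra Cauchy stage is where your proposal has a genuine gap. You correctly identify the obstacle: the linear term $S(t-(T-1/m))\,w_{n,m}(T-1/m)$ has no reason to be bounded in $L_b^{\sigma_b}$ by $\|w_{n,m}(T-1/m)\|_{L_b^{\sigma_b}}$, since the free group does not preserve that weighted space. Saying this ``forces recourse to the Lorentz-space Strichartz machinery underlying Lemma \ref{DispEst}'' is not a proof --- Lemma \ref{DispEst} controls $S(t)$ acting on data of the specific form $|\cdot|^{-b}|f|^{\sigma_b-1}$, not on a generic $L_b^{\sigma_b}$ function. One way to salvage the step is to rewrite $w_{n,m}(T-1/m)=v_{1/n}(T-1/m)-Q_T(T-1/m)$ via the Duhamel formulas of both $v_{1/n}$ and $Q_T$ from the earlier time $T-1/n$; the free pieces then cancel and the residual is a nonlinear integral term to which Lemma \ref{DispEst} and Corollary \ref{DispEst2} genuinely apply. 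But as written, the argument is incomplete. The cleanest fix is simply to delete Stage 2: Stage 1 plus the compact embedding suffices, which is exactly what the paper does.
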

\begin{proof} 
Since $v_{\varepsilon}(T-\varepsilon)=Q_{T}(T-\varepsilon)$, by mass conservation, we first have
$$
\|v_{\varepsilon}(T-\varepsilon_{0})\|_{L^{2}}=\| v_{\varepsilon}(T-\varepsilon)\|_{L^{2}}=\| Q_{T}(T-\varepsilon)\|_{L^{2}}=\| Q_{k(0)} \|_{L^{2}}.
$$

On the other hand, by energy conservation, we also have
\begin{align}
\|\nabla v_{\varepsilon}(T-\varepsilon_{0})\|^2_{L^{2}}
&=2 E[v_{\varepsilon}(T-\varepsilon_{0})]+\frac{2}{\sigma_b} \int k(x)|x|^{-b}\left|v_{\varepsilon}(T-\varepsilon_{0})\right|^{\sigma_b}\, d x \\
& =2 E\left[Q_{T}(T-\varepsilon)\right]++\frac{2}{\sigma_b} \int k(x)|x|^{-b}\left|v_{\varepsilon}(T-\varepsilon_{0})\right|^{\sigma_b}\, d x\\
&=2 E_{k(0)}\left[Q_{T}(T-\varepsilon)\right]+\frac{2}{\sigma_b} \int (k(0)-k(x))|x|^{-b}\left|Q_{T}(T-\varepsilon)\right|^{\sigma_b}\, d x\\
&\quad +\frac{2}{\sigma_b} \int k(x)|x|^{-b}\left|v_{\varepsilon}(T-\varepsilon_{0})\right|^{\sigma_b}\, d x.
\end{align}

In the last two lines, the first term is independent of $\varepsilon>0$, by energy conservation \eqref{energyk}, the second term goes to zero as $\varepsilon\to 0$ by Lemma \ref{L1}-$(ii)$ and the last term is bounded by Proposition \ref{Unifuep}-$(i)$, since $k\in L^{\infty}$. Therefore the set $\left\{v_{1/n}\left(T-\varepsilon_{0}\right)\right\}_{n\in \mathbb{N}}$ is bounded in $H^1$ and there exists $u_0 \in H^{1}$ such that $v_{1/n}(T-\varepsilon_0) \rightharpoonup u_0$ weakly in $H^{1}$ as $n \rightarrow \infty$. The result follows from the compact embedding $H^1 \hookrightarrow L_{b}^{\sigma_{b}}$.
\end{proof}

We have all the tools to prove the existence of a minimal mass blow-up solution under the flatness assumption \eqref{flat}.
\begin{proof}[Proof of Theorem \ref{MinimalBlow}]
Let $u(t)$ be the solution of \eqref{PVI} with initial datum $u_0$ obtained in Lemma \ref{f-phi}. For any $t\in [T-\varepsilon_0, T)$, $v_{1/n}(\cdot)$ is defined on the time interval $[T-\varepsilon_0, t]$ for large enough $n\in \mathbb{N}$ and the $H^1$ norm is uniformly bound on this time interval. Moreover, since $v_{1/n}(T-\varepsilon_{0}) \rightarrow u_0$ strong in $L_{b}^{\sigma_{b}}$, as $n \rightarrow \infty$, from the well-posedness theory and Proposition \ref{Unifuep}-$(ii)$
$$
\|u(t)-Q_{T}(t)\|_{L_b^{\sigma_b}} \leq c e^{-\theta \ell_{0} / (T-t)}, \quad \mbox{for all} \quad t \in [T-\varepsilon_{0}, T).
$$
This implies, for all $R>0$, that 
\begin{equation}\label{limits}
\|u(t)\|_{L_b^{\sigma_b}(|x|<R)}\to \infty \quad \mbox{and} \quad \|u(t)\|_{L_b^{\sigma_b}(|x|\geq R)}\to 0, \quad \mbox{as} \quad t\to T,
\end{equation}
since $Q_T$ has the same behavior by direct computation (see the proof of Lemma \ref{L1}). We claim that
\begin{equation}\label{limits2}
\int k(x)|x|^{-b}|u(t)|^{\sigma_b}\, d x \to \infty, \quad \mbox{as} \quad t\to T.
\end{equation}
Indeed, in view of assumption \eqref{flat}, we have
$$
\int k(x)|x|^{-b}|u(t)|^{\sigma_b}\, d x= k(0)\|u(t)\|_{L_b^{\sigma_b}(|x|<\ell_0)}+\int_{|x|\geq \ell_0} k(x)|x|^{-b}|u(t)|^{\sigma_b}\, d x
$$
and the limits \eqref{limits} imply \eqref{limits2}, since $k\in L^{\infty}$. Now, applying the energy conservation \eqref{energy}, we deduce
$$
\|\nabla u(t)\|_{L^{2}}\to \infty, \quad \mbox{as} \quad t\to T,
$$
and $u(t)$ is a blow-up solution. Moreover since $v_{1/n}(T-\varepsilon_0) \rightharpoonup u_0$ weakly in $H^{1}$ we have, by mass conservation, that
\begin{align}
\|u_0\|_{L^{2}}
&\leq \liminf_n \|v_{1/n}(T-\varepsilon_0)\|_{L^{2}}= \|Q_{k(0)}\|_{L^{2}}.
\end{align}
Therefore, we must have $\|u_0\|_{L^{2}}= \|Q_{k(0)}\|_{L^{2}}$, otherwise $u(t)$ is a global solution in $H^1$ by Theorem \ref{Thm1}, concluding the proof when $\ell_0>0$ is large enough.

%Next, observe that for all $R>0$ we have by Holder's inequality
%\begin{align}
%\left\|u(t)-Q_T(t)\right\|_{L^2(|x|\leq R)} 
%& \leq \||\cdot|^{b/\sigma_b}|\cdot|^{-b/ \sigma_b}\left(u(t)-Q_T(t)\right)\|_{L^{\sigma_b}(|x|\leq R)} \|1\|_{L^{\sigma'_b}(|x|\leq R)}\\
%&\leq R^{b/\sigma_b}R^{1/\sigma'_b}\| u(t)-Q_{T}(t)\|_{L_b^{\sigma_b}}\\
%&\to 0, \quad \mbox{as} \quad t\to T.
%\end{align}
%Therefore, from Lemma \ref{L1}-$(iii)$
%$$
%\|u(t)\|_{L^2(|x| \leq R)}\to \|Q_{k(0)}\|_{L^2},\quad \mbox{as} \quad t\to T. 
%$$
%and since $\|u(t)\|_{L^{2}}= \|Q_{k(0)}\|_{L^{2}}$ we also have
%$$
%\|u(t)\|_{L^2(|x| > R)}\to 0,\quad \mbox{as} \quad t\to T. 
%$$
%concluding the proof when $\ell_0>0$ is large enough.

For general $\ell_0>0$, we proceed as follows. Given $\lambda>0$ define
$$
k_{\lambda}(x)=k(x/\lambda),
$$
then $k_{\lambda}(x)=k(0)$, for all $|x|<\lambda\ell_0$ and taking $\lambda>0$ sufficiently large, from the previous case for a fixed $T>0$, there exists a minimal mass blow-up solution $u^{\lambda}(t)$ defined on the time interval $[0,\lambda^2T)$, solution of the equation
$$
i \partial_{t} u+\Delta u+k_{\lambda}(x)|u|^{\frac{4-2 b}{N}}u=0
$$
%such that $|u^{\lambda}(t)|^{2} \rightharpoonup \|Q_{k(0)}\|_{L^2}^{2} \delta_{0}$, as $t \rightarrow \lambda^2T$. 
Let
$$
u(x,t)=\lambda^{N/2}u^{\lambda}(\lambda x,\lambda^2t).
$$
One can easily check that it is a minimal mass solution of the \eqref{PVI} equation that blows-up in finite time $T>0$.
\end{proof}

\vspace{0.5cm}
\noindent 
\textbf{Acknowledgments.} M. C. was partially supported by Conselho Nacional de Desenvolvimento Cient\'ifico e Tecnol\'ogico - CNPq (project 307099/2023-7). L.G.F. was partially supported by Coordena\c{c}\~ao de Aperfei\c{c}oamento de Pessoal de N\'ivel Superior - CAPES (project 88881.974077/2024-01), Conselho Nacional de Desenvolvimento Cient\'ifico e Tecnol\'ogico - CNPq (project 307323/2023-4) and Funda\c{c}\~ao de Amparo a Pesquisa do Estado de Minas Gerais - FAPEMIG (project PPM-00685-18).

%\bibliography{biblio}
%\bibliographystyle{abbrv}

\newcommand{\Addresses}{{% additional braces for segregating \footnotesize
		\bigskip
		\footnotesize
		
		MYKAEL A. CARDOSO, \textsc{Department of Mathematics, UFPI, Brazil}\par\nopagebreak
		\textit{E-mail address:} \texttt{mykael@ufpi.edu.br}
		
		\medskip
		
		LUIZ G. FARAH, \textsc{Department of Mathematics, UFMG, Brazil}\par\nopagebreak
		\textit{E-mail address:} \texttt{farah@mat.ufmg.br}

}}
\setlength{\parskip}{0pt}
\Addresses

\end{document}